\newcommand{\mymod}{{\rm{mod}}\,}
\theoremstyle{remark}
\newtheorem{definition}{\indent Definition}
\newtheorem{lemma}{\indent Lemma}
\newtheorem{corollary}{\indent Corollary}
\newtheorem{theorem}{\indent Theorem}
\newtheorem*{remark}{\indent Remark}
\begin{document}

\title{The Differential  and Boomerang Properties of a Class of Binomials}

\author{Sihem~Mesnager~\textit{Member, IEEE}~and~Huawei~Wu
  \thanks{S. Mesnager is with the Department of Mathematics, University of Paris VIII, 93526 Saint-Denis, France, also with the Laboratory Analysis, Geometry and Applications (LAGA), UMR 7539, CNRS, University Sorbonne Paris Cit\'e, 93430 Villetaneuse, France, and also with Telecom Paris, Polytechnic Institute of Paris, 91120 Palaiseau, France (e-mail: smesnager@univ-paris8.fr)}%
  \thanks{H. Wu is with the Department of Mathematics, University of Paris VIII, 93526 Saint-Denis, France, and also with the Laboratory Analysis, Geometry and Applications (LAGA), UMR 7539, CNRS, University Sorbonne Paris Cit\'e, 93430 Villetaneuse, France (e-mail: wuhuawei1996@gmail.com)}
}

\markboth{}%
{Shell \MakeLowercase{\textit{et al.}}: A Sample Article Using IEEEtran.cls for IEEE Journals}


\maketitle

\begin{abstract}
  Let $q$ be an odd prime power with $q\equiv 3\ (\mymod 4)$. In this paper, we study the differential and boomerang properties of the function $F_{2,u}(x)=x^2\big(1+u\eta(x)\big)$ over $\mathbb{F}_{q}$, where $u\in\mathbb{F}_{q}^*$ and $\eta$ is the quadratic character of $\mathbb{F}_{q}$. We determine the differential uniformity of $F_{2,u}$ for any $u\in\mathbb{F}_{q}^*$ and determine the differential spectra and boomerang uniformity of the locally-APN functions $F_{2,\pm 1}$, thereby disproving a conjecture proposed in \cite{budaghyan2024arithmetization} which states that there exist infinitely many $q$ and $u$ such that $F_{2,u}$ is an APN function.
\end{abstract}

\begin{IEEEkeywords}
  Boomerang uniformity, boomerang spectrum, character sums, differential uniformity, differential spectrum, locally-APN functions, Ness-Helleseth functions, Weil bound
\end{IEEEkeywords}

\section{Introduction}\label{20240907section0}
\IEEEPARstart{L}{et} $\mathbb{F}_{q}$ be the finite field with $q$ elements, where $q=p^n$, $p$ is a prime and $n$ is a positive integer. For any function $f$ over $\mathbb{F}_q$ and any element $a\in\mathbb{F}_q$, the derivative of $f$ at $a$ is defined as
$$D_af(x)=f(x+a)-f(x),\quad x\in\mathbb{F}_q.$$
For any $a,b\in\mathbb{F}_q$, let
$$\delta_f(a,b)=\#\{x\in\mathbb{F}_q:\ D_af(x)=b\}.$$
The differential uniformity of $f$ is defined as
$$\delta_f=\max\limits_{\substack{a\in\mathbb{F}_q^*\\b\in\mathbb{F}_q}}\delta(a,b),$$
which was introduced by Nyberg in \cite{nyberg1993differentially} to measure the ability of $f$, when used as an S-box (substitution box) in a cipher, to resist differential attacks. The differential uniformity is desired to be as low as possible, corresponding to a stronger resistance against differential attacks. If $\delta_f=1$, then $f$ is called a perfect nonlinear (PN) function, which exists only in odd characteristics. Whereas, if $\delta_f=2$, then $f$ is called an almost perfect nonlinear (APN) function, which is the minimum possible value for binary fields.
When studying the differential properties of a function $f$, the differential uniformity is the most basic characteristic that needs to be determined. The differential spectrum of $f$ can provide more detailed information on the differential properties of $f$, which is defined as the following multiset
$${\rm{DS}}_f=\{\omega_i:\ 0\le i\le\delta_f\},$$
where
$$\omega_i=\#\{(a,b)\in\mathbb{F}_q^*\times\mathbb{F}_q:\ \delta_f(a,b)=i\}.$$
We have the following fundamental property of the differential spectrum (see, for instance, \cite{blondeau2010differential}):
\begin{equation}\label{20240622equation1}
  \sum\limits_{i=0}^{\delta_f}\omega_i=\sum\limits_{i=0}^{\delta_f}i\omega_i=(q-1)q.
\end{equation}

In \cite{blondeau2011differential}, when working on the differential properties of power functions over binary fields, Blondeau and Nyberg introduced a new concept called locally-APNness. They showed that a locally-APN S-box could achieve lower differential probabilities compared to S-boxes with differential uniformity $4$, using a cryptographic toy example \cite{blondeau2015perfect}. Recently, Hu et al. generalized this concept to general functions over arbitrary finite fields \cite{hu2023differential}. A function $f$ over $\mathbb{F}_{q}$ is said to be locally-APN if
$$\max\{\delta_f(a,b):\ a\in\mathbb{F}_q^*,\ b\in\mathbb{F}_{q}\setminus\mathbb{F}_p\}=2.$$

The boomerang attack is a variant of the differential attack proposed by Wagner in \cite{wagner1999boomerang}, which combines the differential layers of the upper and lower layers of block ciphers. The quantity measures the resistance of an S-box to boomerang attacks is called the boomerang uniformity, which was introduced by Boura and Canteaut in \cite{boura2018boomerang} for permutations over binary fields and was later generalized to general functions over arbitrary finite fields by Li et al. in \cite{li2019new}. The boomerang uniformity of a function $f$ over $\mathbb{F}_q$ is defined as
$$\beta_f=\max\limits_{a,b\in\mathbb{F}_q^*}\beta_f(a,b),$$
where $\beta_f(a,b)$ denotes the number of solutions $(x,y)\in\mathbb{F}_q^2$ to the following system of equations
$$\begin{cases}
    f(x)-f(y)=b, \\
    f(x+a)-f(y+a)=b.
  \end{cases}$$
Similarly, the boomerang spectrum of $f$ is defined as the following multiset
$${\rm{BS}}_f=\{\nu_i:\ 0\le i\le\beta_f\},$$
where
$$\nu_i=\#\{(a,b)\in\mathbb{F}_q^*\times\mathbb{F}_q^*:\ \beta_f(a,b)=i\}.$$

From here until the end of this section, we assume that $q\equiv 3\ (\mymod 4)$. Let $C_0$ (resp., $C_1$) denote the set of non-zero square (resp., non-square) elements in $\mathbb{F}_q$. It is known that there exists a unique quadratic character $\eta$ of $\mathbb{F}_q$ which is given by
\begin{align*}
  \eta(x)=\left\{\begin{array}{ll}
                   0,  & \text{if }x=0,      \\
                   1,  & \text{if }x\in C_0, \\
                   -1, & \text{if }x\in C_1.
                 \end{array}\right.
\end{align*}
Throughout this paper, the symbol $\eta$ always represents this meaning.

Consider the following function over $\mathbb{F}_q$:
$$F_u(x)=ux^{\frac{q-3}{2}}+x^{q-2},$$
where $u\in\mathbb{F}_q$. It was first studied by Ness and Helleseth in \cite{ness2007new} for the ternary case and was later generalized to the general case by Zeng et al. in \cite{zeng2007inequivalence}. They showed that if $\eta(u+1)=\eta(u-1)=-\eta(5u+3)$ or $\eta(u+1)=\eta(u-1)=-\eta(5u-3)$, then $F_u$ is an APN function. Subsequently, several papers have been dedicated to studying the differential properties of $F_u$. Zha proved in his PhD dissertation \cite{zha2008} that the differential uniformity of $F_u$ is $3$ if $\eta(u+1)=\eta(u-1)=\eta(5u+3)=\eta(5u-3)$. Recently, when $p=3$, Xia et al. \cite{xia2024more} determined the differential uniformity of $F_u$ for any $u\in\mathbb{F}_q$ and expressed the differential spectrum of $F_u$ in terms of several quadratic character sums of cubic polynomials for any $u\in\mathbb{F}_q$ with $\eta(u+1)=\eta(u-1)$. Very recently, they generalized in \cite{xia2024further} their results to the case of a general odd power $q$ satisfying $q\equiv 3\ (\mymod{4})$. It is worth mentioning that when $u=1$ or $-1$, although $F_{u}$ has a large differential uniformity (equaling $\frac{q+1}{4}$), it is locally-APN.
This was first observed by Lyu et al. in \cite{lyu2024further}, where they also computed the boomerang spectra of $F_{\pm 1}$, revealing the first class of non-PN functions whose boomerang uniformity can attain $0$ or $1$.

Note that $F_u$ can be rewritten as $F_u(x)=x^{p^n-2}\big(1+u\eta(x)\big)$. This inspires us to consider the following generalization of $F_u$:
\begin{equation}\label{20240907equation19}
  F_{r,u}(x)=x^r\big(1+u\eta(x)\big),
\end{equation}
where $r\in\mathbb{N}_+$ and $u\in\mathbb{F}_q$. The numerical results indicate that many of the $F_{r,u}$'s exhibit low differential uniformity. In this paper, we study what appears to be the simplest case, $r=2$. Since $F_{2,0}$ is the square function, which has been extensively studied, we always assume that $u\ne 0$. In \cite{budaghyan2024arithmetization}, Budaghyan and Pal showed that $\delta_{F_{2,u}}\le 5$ for any $u\in\mathbb{F}_q\setminus\{0,\pm1\}$. Moreover, based on their computational results over fields of small orders, they conjectured that there exist infinitely many $q$ and $u$ such that $F_{2,u}$ is an APN function. In this paper, we show that the conjecture does not hold by determining the differential uniformity of the functions $F_{2,u}$'s. 

We set the following set
$$\mathcal{U}=\begin{cases}
    \{0,\pm 1\}                & \mbox{if}\ p=3,    \\
    \{0,\pm 1,\pm\frac{1}{3}\} & \mbox{if}\ p\ne 3.
  \end{cases}$$
The remainder of the paper is organized as follows. In Section \ref{20240907section1}, we introduce some basic concepts and several results that will be used later. In Section \ref{20240907section2}, we preliminarily investigate the differential properties of $F_{2,u}$ for $u\in\mathbb{F}_q\setminus\{0,\pm 1\}$. In Section \ref{20240907section3}, we determine the differential uniformity of $F_{2,u}$ for $u\in\mathbb{F}_q\setminus\mathcal{U}$. In Section \ref{20240907section4}, we determine the differential uniformity of $F_{2,\pm\frac{1}{3}}$ when $p\ne 3$. In Section \ref{20240907section5}, we determine the differential spectra and boomerang uniformity of $F_{2,\pm 1}$. Finally, Section \ref{20240907section6} serves as a conclusion.

\section{Preliminaries}\label{20240907section1}

If $q$ is an odd prime power, we use $C_0$ (resp., $C_1$) to denote the set of non-zero square (resp., non-square) elements in $\mathbb{F}_q$. If $s\in C_0$, we use $\pm\sqrt{s}$ to denote the two square roots of $s$ in $\mathbb{F}_q$. Put
$$\begin{cases}
    C_{00}=\{x\in\mathbb{F}_{q}:\ \eta(x)=\eta(x+1)=1\},      \\
    C_{01}=\{x\in\mathbb{F}_{q}:\ \eta(x)=1,\ \eta(x+1)=-1\}, \\
    C_{10}=\{x\in\mathbb{F}_{q}:\ \eta(x)=-1,\ \eta(x+1)=1\}, \\
    C_{11}=\{x\in\mathbb{F}_{q}:\ \eta(x)=\eta(x+1)=-1\}.
  \end{cases}$$
Then $\mathbb{F}_q=C_{00}\cup C_{01}\cup C_{10}\cup C_{11}\cup\{0,-1\}$.

Regarding the sizes of the sets $C_{ij}$'s, we have the following conclusion.

\begin{lemma}[{\cite[Lemma 6]{storer1967}}]\label{20240908lemma1}
  For any $i,j\in\{0,1\}$, put $(i,j)=\# C_{ij}$.
  \begin{enumerate}
    \item If $q\equiv 1\ (\mymod 4)$, then
          $$(0,0)=\frac{q-5}{4},\ (0,1)=(1,0)=(1,1)=\frac{q-1}{4};$$
    \item If $q\equiv 3\ (\mymod 4)$, then
          $$(0,0)=(1,0)=(1,1)=\frac{q-3}{4},\ (0,1)=\frac{q+1}{4}.$$
  \end{enumerate}
\end{lemma}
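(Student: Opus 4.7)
The plan is to compute each $\#C_{ij}$ by writing the membership conditions as quadratic-character indicators, expanding, and reducing to a small number of well-known character sums.

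First, I would express the indicator of $\eta(x)=\varepsilon\in\{\pm 1\}$ as $\tfrac{1}{2}(1+\varepsilon\eta(x))$, and similarly for $\eta(x+1)$. Writing $\varepsilon_i=(-1)^i$ and summing over $x\in\mathbb{F}_q\setminus\{0,-1\}$ (the only points that are automatically excluded from every $C_{ij}$), I would obtain
\begin{equation*}
\#C_{ij}=\frac{1}{4}\sum_{\substack{x\in\mathbb{F}_q\\x\ne 0,-1}}\bigl(1+\varepsilon_i\eta(x)\bigr)\bigl(1+\varepsilon_j\eta(x+1)\bigr).
\end{equation*}
Expanding, this splits into four pieces: the constant term contributing $q-2$, two ``linear'' character sums in $\eta(x)$ and $\eta(x+1)$, and one ``quadratic'' cross term $\sum\eta(x)\eta(x+1)$.

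Next I would evaluate each of the four sums. The linear sums are easy from $\sum_{x\in\mathbb{F}_q}\eta(x)=0$: after removing the missing points $x=0$ and $x=-1$ one gets $\sum_{x\ne 0,-1}\eta(x)=-\eta(-1)$ and $\sum_{x\ne 0,-1}\eta(x+1)=-1$. The main computational step — and the one slightly less routine than the rest — is the Jacobsthal-type sum $\sum_{x}\eta(x)\eta(x+1)=\sum_x\eta\bigl(x(x+1)\bigr)$. I would handle it by the substitution $y=1+1/x$ for $x\ne 0$, which gives $x(x+1)=y/(y-1)^2$, hence $\eta(x(x+1))=\eta(y)$ for $y\ne 1$; summing yields $-1$, so after accounting for $x\in\{0,-1\}$ the total contribution over $x\ne 0,-1$ is also $-1$.

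Finally, I would substitute back to get the closed form
\begin{equation*}
\#C_{ij}=\frac{1}{4}\Bigl[(q-2)-\varepsilon_i\eta(-1)-\varepsilon_j-\varepsilon_i\varepsilon_j\Bigr],
\end{equation*}
and split into the two cases $\eta(-1)=1$ (when $q\equiv 1\pmod 4$) and $\eta(-1)=-1$ (when $q\equiv 3\pmod 4$). Specializing $(i,j)\in\{0,1\}^2$ in each case gives exactly the four values announced in the lemma. The only place where care is needed is the Jacobsthal sum, but the substitution trick makes it transparent, so I do not anticipate any real obstacle beyond bookkeeping the two excluded points $x=0$ and $x=-1$ consistently.
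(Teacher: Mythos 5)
Your proof is correct: the indicator expansion, the two linear sums, and the evaluation $\sum_{x}\eta\bigl(x(x+1)\bigr)=-1$ (which also follows directly from the paper's Lemma~\ref{20240825lemma5} with $d=1\ne 0$) all check out, and the closed form $\#C_{ij}=\frac{1}{4}\bigl[(q-2)-\varepsilon_i\eta(-1)-\varepsilon_j-\varepsilon_i\varepsilon_j\bigr]$ reproduces all eight stated values. The paper itself gives no proof, citing Storer's Lemma~6 instead, and your argument is the standard character-sum derivation of that result, so there is nothing further to reconcile.
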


The following helpful lemma will be used repeatedly later.

\begin{lemma}\label{20240824lemma3}
  Assume that $q\equiv 3\ (\mymod{4})$. Let $a\in\mathbb{F}_q$ and $u,u'\in C_0$ be such that $u+u'=a^2$. Suppose that $\epsilon\in\{\pm1\}$. Then $\eta(a\pm\sqrt{u})=\epsilon$ if and only if $\eta(a\pm\sqrt{u'})=\eta(2)\epsilon$.
\end{lemma}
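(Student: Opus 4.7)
The plan is to reduce the biconditional to a single multiplicative identity for the quadratic character, by exploiting the relation $u+u'=a^2$ as a ``Pythagorean'' identity between square roots. First I would fix a square root $r$ of $u$ and a square root $s$ of $u'$, so that $r^2+s^2=a^2$. Observing that $(a+r)(a-r)=a^2-u=u'\in C_0$ gives $\eta((a+r)(a-r))=1$; since $u'\neq 0$ neither factor can vanish, so $\eta(a+r)=\eta(a-r)$, and symmetrically $\eta(a+s)=\eta(a-s)$. This step already shows that the value of $\eta(a\pm\sqrt u)$ (resp. $\eta(a\pm\sqrt{u'})$) is independent of the choice of sign, which is what makes the notation in the statement meaningful and reduces the lemma to proving the single identity $\eta(a+r)\eta(a+s)=\eta(2)$.

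The heart of the proof would be the algebraic identity
$$(a+r+s)^2=2(a+r)(a+s),$$
which follows by expanding the left-hand side and substituting $r^2+s^2=a^2$. Applying $\eta$ to both sides gives $\eta((a+r+s)^2)=\eta(2)\,\eta(a+r)\,\eta(a+s)$, and provided the left-hand side is nonzero we immediately obtain the required relation $\eta(a+r)\eta(a+s)=\eta(2)$. Combined with the equalities $\eta(a+r)=\eta(a-r)$ and $\eta(a+s)=\eta(a-s)$ from the previous step, this covers all four sign combinations and yields the claimed biconditional.

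The only non-trivial bookkeeping, and the main technical point I expect to have to handle carefully, is verifying that $a+r+s\neq 0$ so that $(a+r+s)^2$ indeed lies in $C_0$. I would argue by contradiction: if $a+r+s=0$, then $u'=s^2=(a+r)^2=a^2+2ar+r^2$, while from $u+u'=a^2$ we also have $u'=a^2-r^2$; subtracting these gives $2r(a+r)=0$, and since the characteristic is odd and $r\neq 0$, this forces $r=-a$, hence $u=a^2$ and $u'=0$, contradicting $u'\in C_0$. The hypothesis $q\equiv 3\pmod 4$ plays only a supporting role in the argument: via $\eta(-1)=-1$ it rules out the degenerate case $a=0$ (where one would have $u'=-u\notin C_0$), so that the Pythagorean identity $r^2+s^2=a^2$ is genuinely nontrivial and the short computation above applies.
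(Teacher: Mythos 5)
Your proof is correct, and it takes a genuinely different and substantially shorter route than the paper's. The paper argues by contradiction: assuming $\eta(a+\sqrt{u})=-\epsilon$, it writes $a+\sqrt{u}=-\epsilon t^2$, shows the resulting quartic $t^4+2\epsilon a t^2+a^2-u$ must split completely over $\mathbb{F}_q$, factors it as $(t^2+ct+d)(t^2-ct+d)$ with $d=\pm\sqrt{u'}$, and extracts a contradiction from the discriminant condition $\eta(c^2-4d)=1$. Your argument replaces all of this with the single identity $(a+r+s)^2=2(a+r)(a+s)$, valid whenever $r^2+s^2=a^2$, which after the routine non-vanishing checks (all of which you handle correctly: $a\pm r\neq 0$ and $a\pm s\neq 0$ because $(a+r)(a-r)=u'\neq 0$ and $(a+s)(a-s)=u\neq 0$, and $a+r+s\neq 0$ by your contradiction argument forcing $u'=0$) yields $\eta(a+r)\eta(a+s)=\eta(2)$ directly. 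This is cleaner in several respects: it proves both directions of the biconditional simultaneously rather than invoking symmetry, it makes the appearance of the factor $\eta(2)$ completely transparent, and it shows the conclusion does not actually depend on $q\equiv 3\pmod 4$ beyond the fact that this congruence is what allows the hypotheses $u,u'\in C_0$, $u+u'=a^2$ to force $a\neq 0$ (a case your identity handles anyway). One tiny stylistic remark: since $a+r+s=0$ and $a+r-s=0$ cannot hold simultaneously, you could alternatively dodge the non-vanishing check by choosing the sign of $s$ favorably and then invoking $\eta(a+s)=\eta(a-s)$, but your direct contradiction argument is equally valid.
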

\begin{proof}
  By symmetry, it suffices to prove the sufficiency. We assume that $\eta(a\pm\sqrt{u'})=\eta(2)\epsilon$. Since $(a+\sqrt{u})(a-\sqrt{u})=a^2-u=u'\in C_0$, we have $\eta(a\pm\sqrt{u})=\epsilon$ or $\eta(a\pm\sqrt{u})=-\epsilon$. For a contradiction, assume that $\eta(a+\sqrt{u})=-\epsilon$. Then there exists $t\in\mathbb{F}_q$ such that $-\epsilon t^2=a+\sqrt{u}$, which implies that
  \begin{equation}\label{20240824equation2}
    (-\epsilon t^2-a)^2=u\iff t^4+2\epsilon at^2+a^2-u=0.
  \end{equation}
  Consider the quadratic polynomial $x^2+2\epsilon ax+a^2-u$, whose discriminant is $4u\in C_0$. Hence, it has two distinct roots $x_1$ and $x_2$ in $\mathbb{F}_q$. Since $x_1x_2=a^2-u\in C_0$, both $x_1$ and $x_2$ are square or neither is square. It follows that equation (\ref{20240824equation2}) has four distinct solutions or no solution in $\mathbb{F}_q$. By assumption, it has at least one solution in $\mathbb{F}_q$, so it has four distinct solutions in $\mathbb{F}_q$, i.e., $\pm\sqrt{x_1}$ and $\pm\sqrt{x_2}$. Note that
  \begin{align}\label{20240907equation1}
        & t^4+2\epsilon at^2+a^2-u                                 \notag \\
    =\  & (t+\sqrt{x_1})(t-\sqrt{x_1})(t+\sqrt{x_2})(t-\sqrt{x_2}) \notag \\
    =\  & (t^2+ct+d)(t^2-ct+d),
  \end{align}
  where $c=\sqrt{x_1}+\sqrt{x_2}$ and $d=\sqrt{x_1}\sqrt{x_2}$. Comparing the coefficients of $t^2$ and $t^0$, we have $-c^2+2d=2\epsilon a$ and $d^2=a^2-u=u'$, which implies that $d=\sqrt{u'}$ or $d=-\sqrt{u'}$. Moreover, since polynomial (\ref{20240907equation1}) has four distinct roots in $\mathbb{F}_q$, we have $\eta(c^2-4d)=1$. However, we have $c^2-4d=2(d-\epsilon a)-4d=-2(d+\epsilon a)=-2\epsilon(a+\epsilon d)$, which implies that $\eta(a+\epsilon d)=-\eta(2)\epsilon$ (note that $\eta(-1)=-1$ since $q\equiv 3\ (\mymod 4)$). This contradicts our assumption that $\eta(a\pm\sqrt{u'})=\eta(2)\epsilon$. Hence $\eta(a\pm\sqrt{u})=\epsilon$.
\end{proof}

An important tool used later in this paper is estimating rational points on algebraic curves over finite fields. Here, we only provide a minimal introduction to the necessary concepts.

Let $q$ be an arbitrary prime power. For any polynomial $f\in\mathbb{F}_q[x_1,\cdots,x_n]$, we use $A_f(\mathbb{F}_q)$ to denote the zero set of $f$ in $\mathbb{F}_q^n$, i.e.,
$$A_f(\mathbb{F}_q)=\{(a_1,\cdots,a_n)\in\mathbb{F}_q^n:\ f(a_1,\cdots,a_n)=0\}.$$

We need the following definition of absolute irreducibility.
\begin{definition}
  Let $F$ be a field. A polynomial $f\in F[x_1,\cdots,x_n]$ is said to be absolutely irreducible if it is irreducible over $\overline{F}$, the algebraic closure of $F$.
\end{definition}

The following is a variation of the well-known Weil estimate.
\begin{theorem}[{\cite[Theorem 7.1.9]{mullen2013handbook}}]\label{20240904them1}
  If $f\in\mathbb{F}_q[x_1,\cdots,x_n]$ is an absolutely irreducible polynomial of degree $d>0$, then
  $$\left|\# A_f(\mathbb{F}_q)-q^{n-1}\right|\le(d-1)(d-2)q^{n-\frac{3}{2}}+5d^{\frac{13}{3}}q^{n-2}.$$
\end{theorem}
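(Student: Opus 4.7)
The plan is to prove Theorem \ref{20240904them1} by adapting Schmidt's elementary approach: reduce the hypersurface case to the classical Weil bound for plane curves via a slicing (induction on $n$) argument. The base case $n=2$ is handled by passing to the smooth projective model $\widetilde C$ of the absolutely irreducible curve $f=0$. Its geometric genus satisfies $g\le\binom{d-1}{2}$, so Weil's theorem for curves yields $\bigl|\#\widetilde C(\mathbb F_q)-(q+1)\bigr|\le 2g\sqrt q\le (d-1)(d-2)\sqrt q$. The corrections required to pass from $\widetilde C(\mathbb F_q)$ to $A_f(\mathbb F_q)$, namely removing points at infinity, resolving singular points of the affine model, and accounting for the positive-dimensional components of the singular locus, can each be bounded by a polynomial in $d$, producing the $5d^{13/3}$ error term and matching the stated inequality for $n=2$.

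For $n\ge 3$, I would induct by projecting $A_f(\mathbb F_q)$ onto the first $n-2$ coordinates. For each $(a_1,\dots,a_{n-2})\in\mathbb F_q^{n-2}$, the fiber is the plane curve cut out by $f(a_1,\dots,a_{n-2},x_{n-1},x_n)$, a polynomial of degree $\le d$ in the remaining two variables. For a generic specialization this restriction is still absolutely irreducible, the base case applies fiber by fiber, and each such fiber contributes $q+O\bigl(d^2\sqrt q+d^{13/3}\bigr)$ points. Summing over the $q^{n-2}$ generic fibers gives the main term $q^{n-1}$ together with the two advertised error contributions $(d-1)(d-2)q^{n-3/2}$ and $5d^{13/3}q^{n-2}$.

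The main obstacle is controlling the bad specializations where $f(a_1,\dots,a_{n-2},x_{n-1},x_n)$ either vanishes identically or factors nontrivially over $\overline{\mathbb F_q}$. Schmidt's approach quantifies this via a careful study of the locus of reducible specializations inside a suitable symmetric-product parameter space, showing that the number of such $(a_1,\dots,a_{n-2})$ is bounded by a polynomial in $d$ of controlled degree; the somewhat loose constants $5$ and $13/3$ come precisely from bounding the degree of this locus. Producing a sufficiently sharp effective Bertini-type statement is the delicate technical step. Since the result is classical and a full elementary proof fills many pages and does not interact with the rest of the paper, in practice I would do what the authors do and simply cite Theorem 7.1.9 of Mullen's handbook.
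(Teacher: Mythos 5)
The paper offers no proof of this statement at all: it is imported verbatim as Theorem 7.1.9 of Mullen's handbook (the underlying result is the explicit Lang--Weil estimate of Cafure and Matera), and nothing in the rest of the paper depends on how it is proved. Your closing decision to cite it is therefore exactly what the authors do, and is the right call.

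Your sketch of how a proof would go is essentially the standard one (Weil for the smooth projective model of a plane curve in the base case, then slicing by planes and an effective Bertini argument to control reducible specializations), but one quantitative claim in it is off as stated: for $n\ge 4$ the set of bad tuples $(a_1,\dots,a_{n-2})$ is \emph{not} bounded by a polynomial in $d$ alone. The locus of specializations where $f(a_1,\dots,a_{n-2},x_{n-1},x_n)$ degenerates is a proper closed subvariety of $\mathbb{A}^{n-2}$ whose \emph{degree} is bounded polynomially in $d$, so its point count is $O_d(q^{n-3})$; since each bad fiber is a plane curve of degree at most $d$ and hence has at most $dq$ points, the bad fibers contribute $O_d(q^{n-2})$ in total, which is what gets absorbed into the $5d^{13/3}q^{n-2}$ term. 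Only for $n=3$ is the bad locus genuinely finite with cardinality controlled by $d$. With that correction the architecture of your argument matches the published proofs, but making the Bertini step effective with these particular constants is a substantial piece of work that rightly belongs in the cited reference rather than in this paper.
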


Let $F$ be a field. Note that a bivariate polynomial $f(x,y)\in F[x,y]$ can be viewed as a univariate polynomial over the ring $F[x]$. If $f(x,y)$ has the form $f(x,y)=y^k+f_{k-1}(x)y^{k-1}+\cdots+f_0(x)$, where $f_i(x)\in F[x]$ for $0\le i\le k-1$, then the irreducibility of $f$ in $F[x,y]$ is equivalent to its irreducibility as a polynomial over $F[x]$. Thus, in this case, we only need to consider the irreducibility of univariate polynomials over rings. The following two simple lemmas will play a crucial role in verifying the absolute irreducibility of bivariate polynomials later.

\begin{lemma}\label{20240904lemma2}
  Let $R$ be an integral domain and $A, B\in R$. If neither $A^2-4B$ nor $B$ is a square element in $R$, then the polynomial $f(x)=x^4+Ax^2+B$ is irreducible in $R[x]$.
\end{lemma}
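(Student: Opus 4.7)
My plan is to argue by contradiction. Suppose $f(x)=x^4+Ax^2+B$ factors non-trivially in $R[x]$. Since $f$ is monic of degree~$4$, the product of the leading coefficients in any factorization equals~$1$, so each leading coefficient is a unit in $R$; dividing each factor by its leading coefficient then yields a factorization of $f$ into monic polynomials of positive degree. Hence I may reduce to one of two cases: either $f$ admits a monic linear factor (i.e.\ a root in $R$), or $f$ decomposes as a product of two monic quadratics.

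In the first case I would directly exploit the biquadratic shape: if $r\in R$ satisfies $f(r)=0$, then $B=-r^2(r^2+A)$, so
$$A^2-4B=A^2+4Ar^2+4r^4=(A+2r^2)^2,$$
exhibiting $A^2-4B$ as a square in $R$ and contradicting the hypothesis. In the second case I would write $f=(x^2+ax+b)(x^2+cx+d)$ with $a,b,c,d\in R$ and compare coefficients. The coefficient of $x^3$ forces $c=-a$; the coefficient of $x$ forces $a(d-b)=0$; and the remaining identities give $bd=B$ and $b+d-a^2=A$. Because $R$ is an integral domain, either $a=0$ or $b=d$. In the first sub-case, $(b-d)^2=(b+d)^2-4bd=A^2-4B$ presents $A^2-4B$ as a square; in the second, $B=b^2$ is a square. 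Either outcome contradicts the hypothesis, completing the argument.

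The step I expect to require the most care is the initial reduction to a factorization into monic polynomials, since we work over a general integral domain rather than a UFD and so cannot freely invoke unique factorization or Gauss's lemma. The observation that the leading coefficients of any factorization must be units (forced by $f$ being monic) sidesteps this difficulty cleanly; after that the argument reduces to a single coefficient comparison and a one-line calculation, both routine. A minor point worth noting is that in characteristic~$2$ the quantity $A^2-4B=A^2$ is automatically a square, so the hypothesis is vacuous there; the identities above remain valid in every characteristic, so no separate treatment is needed.
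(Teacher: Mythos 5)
Your proof is correct and follows essentially the same route as the paper's: rule out a factorization into two monic quadratics by comparing the $x^3$ and $x$ coefficients (forcing either $A^2-4B=(b-d)^2$ or $B=b^2$), and rule out a linear factor by showing a root $r$ makes $A^2-4B=(A+2r^2)^2$ a square, which is the same identity the paper obtains via the auxiliary quadratic $x^2+Ax+B$. Your explicit reduction to monic factors via the unit leading coefficients is a small point of extra care that the paper leaves implicit, but it does not change the argument.
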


\begin{proof}
  Assume that $f(x)$ can be factored as the product of two quadratic polynomials, i.e.,
  $$f(x)=(x^2+A_1x+B_1)(x^2+A_2x+B_2).$$
  Comparing the coefficients of $x^3$ and $x$ on both sides, we have
  $$A_1+A_2=0,\qquad A_1B_2+A_2B_1=0.$$
  If $A_1=A_2=0$, then we have $B_1+B_2=A$ and $B_1B_2=B$, which implies that $A^2-4B=(B_1-B_2)^2$ is a square element in $R$. This contradicts our hypothesis. Hence, both $A_1$ and $A_2$ are non-zero, which implies that $B_1=B_2$. Then, $B=B_1^2$ is a square element in $R$, which contradicts our hypothesis. Hence, $f(x)$ cannot be factored as the product of two quadratic polynomials.

  Now assume that $f(x)$ can be factored as the product of a linear and cubic polynomial. Then $f(x)$ has a root in $R$, say $a$, which implies that $x_1=a^2$ is a root of the quadratic polynomial $\tilde{f}(x)=x^2+Ax+B$. It follows that $\tilde{f}(x)$ has another root $x_2$ in $R$, with $x_1+x_2=-A$ and $x_1x_2=B$. Then, we again have $A^2-4B$, a square element in $R$, which contradicts our hypothesis. Hence $f(x)$ is irreducible in $R[x]$.
\end{proof}

\begin{lemma}\label{20240904lemma1}
  Let $R$ be an integral domain such as $2\ne 0$ and $A, B\in R$. If $f(x)=x^4+Ax^2+B$ is a square element in $R[x]$, then $A^2-4B=0$.
\end{lemma}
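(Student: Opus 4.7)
The plan is to argue by a direct degree-and-coefficient comparison in the polynomial ring $R[x]$. Since $R$ is an integral domain, so is $R[x]$, and in particular $R[x]$ has no zero divisors, so any square root of $f(x)$ must have degree exactly $2$. Thus I would start by writing $f(x)=g(x)^2$ for some $g(x)=ax^2+bx+c\in R[x]$, with the aim of extracting constraints on $a,b,c$ by matching coefficients.

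Next I would expand
\[
g(x)^2 = a^2 x^4 + 2ab\, x^3 + (b^2+2ac)\, x^2 + 2bc\, x + c^2,
\]
and compare with $x^4+Ax^2+B$ term by term. The coefficient of $x^4$ gives $a^2=1$, so $a=\pm 1$ since $R$ is an integral domain. The coefficient of $x^3$ gives $2ab=0$; here is where the hypothesis $2\neq 0$ in $R$ is essential, as combined with $a\neq 0$ and the absence of zero divisors it forces $b=0$. The coefficient of $x$ then automatically vanishes, while the coefficients of $x^2$ and $x^0$ give $A=2ac$ and $B=c^2$.

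Finally I would eliminate $c$: squaring $A=2ac$ yields $A^2=4a^2c^2=4c^2=4B$, hence $A^2-4B=0$, as desired.

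There is no real obstacle here; the lemma is essentially a bookkeeping result. The only delicate point to flag is the role of the assumption $2\neq 0$, without which the coefficient of $x^3$ would not force $b=0$ and the conclusion could fail (for example in characteristic $2$, $x^4+B=(x^2+\sqrt{B})^2$ whenever $B$ is a square, regardless of the value of $A^2-4B=A^2$).
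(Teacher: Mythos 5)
Your proof is correct and follows essentially the same route as the paper's: write the square root as a quadratic, expand, and compare coefficients, using $2\neq 0$ and the integral-domain hypothesis to force the linear coefficient to vanish. The only difference is that you do not assume the square root is monic (handling $a=\pm 1$ explicitly) and you justify that its degree must be $2$, which are minor refinements of the paper's argument rather than a different approach.
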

\begin{proof}
  Assume that $f(x)=(x^2+A_1x+B_1)^2$. Comparing the coefficients of all terms on both sides, we have
  $$2A_1=0,\quad 2B_1+A_1^2=A,\quad 2A_1B_1=0,\quad B_1^2=B.$$
  It follows that $A_1=0$, $A=2B_1$ and $B=B_1^2$, which implies that $A^2-4B=0$.
\end{proof}

The character sum is a powerful tool in both theory and application. Below, we recall some facts about character sums of the form $\sum_{a\in\mathbb{F}_q}\psi\big(f(a)\big)$ where $f(x)\in\mathbb{F}_q[x]$ and $\psi$ is a multiplicative character of $\mathbb{F}_q$. Such character sums are called Weil sums.

If $f$ is a quadratic polynomial and $\psi=\eta$, then we can determine the exact value of the associated Weil sum.

\begin{lemma}[{\cite[Theorem 5.48]{lidl1997finite}}]\label{20240825lemma5}
  Let $q$ be an odd prime power and let $f(x)=a_2x^2+a_1x+a_0\in\mathbb{F}_q[x]$ with $a_2\ne 0$. Put $d=a_1^2-4a_0a_2$. Then
  \begin{equation*}
    \sum\limits_{x\in\mathbb{F}_q}\eta\big(f(x)\big)=\begin{cases}
      -\eta(a_2),     & \mbox{if }d\ne 0, \\
      (q-1)\eta(a_2), & \mbox{if }d=0.
    \end{cases}
  \end{equation*}
\end{lemma}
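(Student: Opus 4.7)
The plan is to complete the square and reduce to the standard sum $\sum_{y\in\mathbb{F}_q}\eta(y^2-c)$, which I would then evaluate by a short double-counting argument.

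First I would exploit that $q$ is odd, so $\eta(4)=1$ and consequently $\eta(4a_2)=\eta(a_2)$. Using the identity
$$4a_2f(x)=(2a_2x+a_1)^2-(a_1^2-4a_0a_2)=(2a_2x+a_1)^2-d,$$
and noting that $y:=2a_2x+a_1$ is an affine bijection of $\mathbb{F}_q$ (since $2a_2\ne 0$), I would rewrite
$$\sum_{x\in\mathbb{F}_q}\eta\bigl(f(x)\bigr)=\eta(a_2)\sum_{y\in\mathbb{F}_q}\eta(y^2-d).$$
This reduces the problem to evaluating $S(d):=\sum_{y\in\mathbb{F}_q}\eta(y^2-d)$.

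The case $d=0$ is immediate: $\eta(y^2)=1$ for $y\ne 0$ and $\eta(0)=0$, so $S(0)=q-1$, giving the stated value $(q-1)\eta(a_2)$. For $d\ne 0$, I would compute $N:=\#\{(y,z)\in\mathbb{F}_q^2:\ y^2-z^2=d\}$ in two ways. On one hand, using that for every $c\in\mathbb{F}_q$ the number of square roots of $c$ equals $1+\eta(c)$ (with $\eta(0)=0$), I get
$$N=\sum_{y\in\mathbb{F}_q}\bigl(1+\eta(y^2-d)\bigr)=q+S(d).$$
On the other hand, the change of variables $u=y-z$, $v=y+z$ is a bijection of $\mathbb{F}_q^2$ in odd characteristic and converts $y^2-z^2=d$ into $uv=d$, which has exactly $q-1$ solutions because $d\ne 0$. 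Equating the two counts gives $S(d)=-1$, so $\sum_x\eta(f(x))=-\eta(a_2)$, as claimed.

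There is no serious obstacle here; the only thing that requires a little care is the bookkeeping with the convention $\eta(0)=0$, and the verification that both changes of variables $y\mapsto 2a_2 x+a_1$ and $(y,z)\mapsto(y-z,y+z)$ are genuine bijections, which in each case ultimately depends only on $\operatorname{char}\mathbb{F}_q\ne 2$. The rest is a transparent double-counting argument.
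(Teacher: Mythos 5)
Your proof is correct. Note that the paper does not prove this statement at all: it is quoted verbatim as Theorem 5.48 of Lidl and Niederreiter's \emph{Finite Fields}, so there is no in-paper argument to compare against. Your derivation --- completing the square via $4a_2f(x)=(2a_2x+a_1)^2-d$, reducing to $S(d)=\sum_{y}\eta(y^2-d)$, and evaluating $S(d)$ for $d\ne 0$ by double-counting the points of the hyperbola $uv=d$ --- is the standard textbook proof, and all the bijection and $\eta(0)=0$ bookkeeping checks out.
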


The counting problem in the following Lemma can be addressed using Lemma \ref{20240825lemma5}.

\begin{lemma}[{\cite[Lemma 6.24]{lidl1997finite}}]\label{20240903lemma1}
  Let $q$ be an odd prime power, $b\in\mathbb{F}_q$ and $a_1,a_2\in\mathbb{F}_q^*$. Then
  $$\#\left\{(x_1,x_2)\in\mathbb{F}_{q}^2:\ a_1x_1^2+a_2x_2^2=b\right\}=q+\nu(b)\eta(-a_1a_2),$$
  where the integer-valued function $\nu$ on $\mathbb{F}_q$ is defined by $\nu(b)=-1$ for $b\in\mathbb{F}_q^*$ and $\nu(0)=q-1$.
\end{lemma}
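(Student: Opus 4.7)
The plan is to reduce this bivariate counting problem to a univariate quadratic Weil sum and then apply Lemma~\ref{20240825lemma5}. The starting point is the standard identity $\#\{x\in\mathbb{F}_q:x^2=c\}=1+\eta(c)$, valid for all $c\in\mathbb{F}_q$ with the convention $\eta(0)=0$ (which correctly gives $1$ at $c=0$). Fixing $x_1$ in the equation $a_1x_1^2+a_2x_2^2=b$ and applying this identity to $x_2^2=(b-a_1x_1^2)/a_2$ yields
$$\#\{(x_1,x_2)\in\mathbb{F}_q^2:\,a_1x_1^2+a_2x_2^2=b\}=\sum_{x_1\in\mathbb{F}_q}\!\left(1+\eta\!\left(\frac{b-a_1x_1^2}{a_2}\right)\right)=q+\eta(a_2)\sum_{x_1\in\mathbb{F}_q}\eta(b-a_1x_1^2),$$
where I used $\eta(a_2^{-1})=\eta(a_2)$ on $\mathbb{F}_q^*$ together with the multiplicativity of $\eta$.

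The next step is to invoke Lemma~\ref{20240825lemma5} on the quadratic polynomial $-a_1x_1^2+b$, whose leading coefficient is $-a_1\ne 0$ and whose discriminant is $4a_1b$. This naturally splits into two cases matching the case distinction in that lemma. If $b\ne 0$, the discriminant is non-zero and the inner sum evaluates to $-\eta(-a_1)$, so the total count becomes $q-\eta(a_2)\eta(-a_1)=q-\eta(-a_1a_2)$, which is $q+\nu(b)\eta(-a_1a_2)$ since $\nu(b)=-1$. If $b=0$, the discriminant vanishes and the inner sum equals $(q-1)\eta(-a_1)$, producing $q+(q-1)\eta(-a_1a_2)=q+\nu(0)\eta(-a_1a_2)$.

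There is essentially no obstacle here: the whole argument is bookkeeping, and the two cases in Lemma~\ref{20240825lemma5} are precisely the two cases built into the definition of $\nu$. The only point requiring minor care is combining $\eta(a_2)\eta(-a_1)$ into $\eta(-a_1a_2)$ via multiplicativity, which is immediate.
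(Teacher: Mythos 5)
Your argument is correct: the reduction to $\sum_{x_1}\eta(b-a_1x_1^2)$ via $\#\{x:x^2=c\}=1+\eta(c)$ and the two-case evaluation through Lemma~\ref{20240825lemma5} (discriminant $4a_1b$ zero or not) exactly reproduces the claimed formula. The paper does not prove this lemma itself but cites it and explicitly notes it can be addressed using Lemma~\ref{20240825lemma5}, which is precisely the route you take, so your proof matches the intended approach.
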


We need the following result of the character sum.

\begin{lemma}\label{20240908lemma2}
  Let $q$ be an odd prime power such that $q\equiv 3\ (\mymod 4)$, then
  $$\sum\limits_{x\in\mathbb{F}_q}\eta(x^4-1)=-1.$$
\end{lemma}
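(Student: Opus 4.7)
The plan is to exploit the symmetry $x \mapsto 1/x$ on the multiplicative group to collapse the nonzero contribution, and then read off the $x = 0$ term using $\eta(-1) = -1$ (which holds since $q \equiv 3 \pmod 4$).

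First I would split off the $x = 0$ summand, writing
$$\sum_{x \in \mathbb{F}_q} \eta(x^4 - 1) = \eta(-1) + \sum_{x \in \mathbb{F}_q^*} \eta(x^4 - 1) = -1 + S,$$
where $S := \sum_{x \in \mathbb{F}_q^*} \eta(x^4 - 1)$. The task is thus reduced to showing $S = 0$.

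For the nonzero part, I would factor $\eta(x^4 - 1) = \eta(x^4) \eta(1 - x^{-4}) = \eta(1 - x^{-4})$ for $x \neq 0$, using that $x^4$ is always a nonzero square for $x \in \mathbb{F}_q^*$. Then the substitution $y = x^{-1}$ (which is a bijection of $\mathbb{F}_q^*$) yields
$$S = \sum_{y \in \mathbb{F}_q^*} \eta(1 - y^4) = \eta(-1) \sum_{y \in \mathbb{F}_q^*} \eta(y^4 - 1) = -S.$$
Hence $2S = 0$, and since $q$ is odd, $S = 0$. Combining with the $x = 0$ contribution gives the claimed value $-1$.

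There isn't really a main obstacle here; the whole argument is a two-line symmetry trick. The only point requiring care is that the substitution $x \mapsto 1/x$ preserves $\mathbb{F}_q^*$, and that the factor $\eta(-1)$ that appears after pulling out the sign contributes $-1$ precisely because of the hypothesis $q \equiv 3 \pmod 4$. Should a more self-contained derivation be preferred, one could instead parameterize by $y = x^2$, writing $S = 2 \sum_{y \in C_0} \eta(y^2 - 1)$ and expanding $\mathbf{1}_{C_0}(y) = (1 + \eta(y))/2$, so that $S$ becomes a sum of $\sum_y \eta(y^2 - 1)$ (evaluated by Lemma \ref{20240825lemma5}) and $\sum_y \eta(y^3 - y)$ (which vanishes by the $y \mapsto -y$ antisymmetry, again using $\eta(-1) = -1$). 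Both computations confirm $S = 0$, but the inversion argument is shorter and I would present that one.
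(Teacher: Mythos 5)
Your proof is correct, and it takes a genuinely different route from the paper's. You exploit the inversion symmetry: writing $\eta(x^4-1)=\eta(x^4)\,\eta(1-x^{-4})=\eta(1-x^{-4})$ for $x\ne 0$ and substituting $y=x^{-1}$ turns the nonzero part $S$ into $\eta(-1)S=-S$, so $S=0$ and only the $x=0$ term $\eta(-1)=-1$ survives; every step is sound (the points where $x^4=1$ cause no trouble since $\eta(0)=0$ is compatible with multiplicativity). The paper instead observes that when $q\equiv 3\ (\mymod 4)$ exactly one of $\pm z$ is a square, so $x\mapsto x^4$ is two-to-one onto $C_0$ just as $x\mapsto x^2$ is; hence $\sum_x\eta(x^4-1)=\eta(-1)+2\sum_{y\in C_0}\eta(y-1)=\sum_x\eta(x^2-1)$, which equals $-1$ by Lemma~\ref{20240825lemma5}. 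Both arguments hinge on $\eta(-1)=-1$, but they buy different things: yours is fully self-contained and needs no auxiliary lemma, though it is tailored to the reciprocal symmetry of $x^4-1$; the paper's reduction of fourth powers to squares is a reusable principle (it lets one replace $x^4$ by $x^2$ inside any quadratic character sum over a field with $q\equiv 3\ (\mymod 4)$) at the cost of invoking the standard evaluation of $\sum_x\eta(x^2-1)$. Your fallback computation via $y=x^2$ and $\mathbf{1}_{C_0}(y)=(1+\eta(y))/2$ is essentially a hybrid of the two and also works.
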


\begin{proof}
  Let $y\in C_0$. Then $y=z^2$ for some $z\in\mathbb{F}_q^*$. Since $\eta(z)\eta(-z)=\eta(-y)=-1$, we have $\eta(z)=-\eta(-z)$. We may assume that $\eta(z)=1$, and then $z=x^2$ for some $x\in\mathbb{F}_q^*$, which implies that $y=x^4$. Hence $C_0=\{x^4:\ x\in\mathbb{F}_q^*\}$. Moreover, for any $y\in C_0$, there exist exactly two elements $x\in\mathbb{F}_q^*$ such that $y=x^4$. It follows that
  \begin{align*}
    \sum\limits_{x\in\mathbb{F}_q}\eta(x^4-1) & =\eta(-1)+2\sum\limits_{y\in C_0}\eta(y-1)    \\
                                              & =\sum\limits_{x\in\mathbb{F}_q}\eta(x^2-1)=-1
  \end{align*}
  by Lemma \ref{20240825lemma5}.
\end{proof}

Jacobsthal sums are also a class of Weil sums that have been extensively studied.

\begin{definition}
  Let $q$ be an odd prime number, $n\in\mathbb{N}_+$ and $a\in\mathbb{F}_q^*$. The sum
  $$H_n(a)=\sum\limits_{x\in\mathbb{F}_q}\eta(x^{n+1}+ax)$$
  is called a Jacobsthal sum.
\end{definition}

\begin{lemma}[{\cite[Theorem 5.52]{lidl1997finite}}]\label{20240906lemma1}
  Let $q$ be an odd prime number, $n\in\mathbb{N}_+$ and $a\in\mathbb{F}_q^*$. We have $H_n(a)=0$ if the largest power of $2$ dividing $q-1$ also divides $n$.
\end{lemma}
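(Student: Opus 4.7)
The plan is to prove $H_n(a) = 0$ by finding a multiplicative substitution $x \mapsto tx$ that preserves the Jacobsthal sum up to an overall factor of $\eta(t) = -1$. Factoring $x^{n+1} + ax = x(x^n + a)$ and using that $\eta$ is multiplicative (the $x = 0$ term contributes nothing), I would first rewrite
$$H_n(a) = \sum_{x \in \mathbb{F}_q^*} \eta(x)\, \eta(x^n + a).$$
For any $t \in \mathbb{F}_q^*$ the change of variables $x = ty$ then yields
$$H_n(a) = \eta(t) \sum_{y \in \mathbb{F}_q^*} \eta(y)\, \eta(t^n y^n + a).$$
Thus, if one can exhibit a $t \in \mathbb{F}_q^*$ satisfying both $t^n = 1$ and $\eta(t) = -1$, the right-hand side collapses to $-H_n(a)$, and since the characteristic of $\mathbb{F}_q$ is odd this forces $H_n(a) = 0$.

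To produce such a $t$, I would set $d = \gcd(n, q-1)$. The $n$-th roots of unity in $\mathbb{F}_q^*$ form precisely the cyclic subgroup of order $d$, namely $\langle g^{(q-1)/d}\rangle$ for any primitive root $g$ of $\mathbb{F}_q^*$. The arithmetic heart of the argument is the observation that the hypothesis forces $(q-1)/d$ to be odd: writing $2^s$ for the largest power of $2$ dividing $q-1$, the assumption $2^s \mid n$ combined with $v_2(d) = \min(v_2(n), v_2(q-1)) = s$ yields $v_2((q-1)/d) = 0$. Hence $t := g^{(q-1)/d}$ satisfies $t^n = 1$ (because $d \mid n$) and $\eta(t) = \eta(g)^{(q-1)/d} = (-1)^{(q-1)/d} = -1$, exactly what the substitution requires.

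The main (and only) substantive step is spotting the symmetry argument together with the accompanying 2-adic calculation showing $(q-1)/d$ odd; once these are in hand, the proof is a one-line consequence of the substitution identity above. No character-sum evaluation, Weil estimate, or curve-counting is required.
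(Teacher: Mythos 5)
Your argument is correct and complete: the substitution $x\mapsto tx$ with $t^n=1$ and $\eta(t)=-1$, combined with the $2$-adic check that $(q-1)/\gcd(n,q-1)$ is odd, is exactly the standard proof of this fact — the paper itself gives no proof, quoting the statement as Theorem 5.52 of Lidl--Niederreiter, whose proof is essentially your argument with the witness $t$ chosen as an element of order $2^s$. One cosmetic remark: $H_n(a)=-H_n(a)$ is an equality of integers, so it already forces $H_n(a)=0$ without any appeal to the characteristic of $\mathbb{F}_q$.
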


\begin{remark}
  If $q\equiv 3\ (\mymod 4)$, then the largest power of $2$ dividing $q-1$ is $2$, which implies that $H_n(a)=0$ for any even $n$ and $a\in\mathbb{F}_q^*$.
\end{remark}

For a general polynomial $f$, we have the following estimate for the associated Weil sum.

\begin{theorem}[{\cite[Theorem 5.41]{lidl1997finite}}]\label{20240904them3}
  Let $q$ be an odd prime power, let $\psi$ be a multiplicative character of $\mathbb{F}_q$ of order $m>1$, and let $f\in\mathbb{F}_q[x]$ be a monic polynomial of positive degree that is not an $m$-th power of a polynomial. Let $d$ be the number of distinct roots of $f$ in its splitting field over $\mathbb{F}_q$. Then for every $a\in\mathbb{F}_q$, we have
  $$\left|\sum\limits_{x\in\mathbb{F}_q}\psi\big(af(x)\big)\right|\le(d-1)\sqrt{q}.$$
\end{theorem}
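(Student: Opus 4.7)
My plan is to derive the bound from the Hasse--Weil theorem applied to an auxiliary algebraic curve. The case $a=0$ is trivial, and for $a\ne 0$ the identity $\psi(af(x))=\psi(a)\psi(f(x))$ together with $|\psi(a)|=1$ reduces the estimate to the case $a=1$; so I shall bound $S := \sum_{x\in\mathbb{F}_q}\psi\big(f(x)\big)$.

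The key construction will be the Kummer-type curve $C:\ y^m = f(x)$ over $\mathbb{F}_q$, where $m$ is the exact order of $\psi$. The hypothesis that $f$ is not an $m$-th power in $\mathbb{F}_q[x]$ guarantees that $y^m-f(x)$ is absolutely irreducible, so $C$ is geometrically integral; let $\tilde C$ denote its smooth projective model. Using orthogonality of the characters $1,\psi,\ldots,\psi^{m-1}$ of $\mathbb{F}_q^*$ (extended by $\psi^j(0)=0$ for $j\ne 0$), for every $x_0\in\mathbb{F}_q$ with $f(x_0)\ne 0$ one has
$$\#\{y\in\mathbb{F}_q:\ y^m=f(x_0)\} = \sum_{j=0}^{m-1}\psi^j\big(f(x_0)\big).$$
Summing over $x_0$ and adding the $O(d)$ ramification points and points at infinity on $\tilde C$ then yields
$$\#\tilde C(\mathbb{F}_q) = q+1+\sum_{j=1}^{m-1}S_{\psi^j}+O(d),\quad\text{where } S_{\chi}:=\sum_{x\in\mathbb{F}_q}\chi\big(f(x)\big).$$

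I will then invoke the Hasse--Weil theorem for $\tilde C$: $\#\tilde C(\mathbb{F}_q)-(q+1) = -\sum_{i=1}^{2g}\alpha_i$ with $|\alpha_i|=\sqrt q$. The cyclic group $\mu_m$ acts on $\tilde C$ by $(x,y)\mapsto (x,\zeta y)$, hence on the Jacobian, and Frobenius commutes with this action; by the standard isotypic decomposition the Frobenius eigenvalues split into $m-1$ non-trivial groups, with those in the $\zeta^j$-group summing to $-S_{\psi^j}$. A Riemann--Hurwitz calculation for the degree-$m$ cover $\tilde C\to\mathbb{P}^1$, $(x,y)\mapsto x$, which is ramified precisely over the $d$ distinct zeros of $f$ (and possibly $\infty$), will show that each such group has at most $d-1$ elements. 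Hence $|S_{\psi^j}|\le (d-1)\sqrt q$ for every $j\in\{1,\ldots,m-1\}$; taking $j=1$ completes the proof.

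The hard part will be the ramification bookkeeping, especially when $p:=\mathrm{char}(\mathbb{F}_q)$ divides $m$: wild ramification complicates both the Riemann--Hurwitz step and the isotypic dimension count at infinity. In the tame case $p\nmid m$ the argument is essentially immediate; in general one either invokes a more refined form of the Weil conjectures, or bypasses curves altogether and proves the bound directly by Stepanov's elementary method, which is the route actually taken in the reference cited in the statement.
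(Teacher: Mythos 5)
First, note that the paper does not prove this statement at all: it is imported verbatim as \cite[Theorem 5.41]{lidl1997finite}, i.e., as Weil's classical bound, and is used as a black box throughout. So the only question is whether your sketch is a viable proof of that classical result, and there it has a genuine gap.

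The gap is your irreducibility claim. The hypothesis that $f$ is not an $m$-th power of a polynomial does \emph{not} imply that $y^m-f(x)$ is absolutely irreducible: by Capelli's criterion, irreducibility over $\overline{\mathbb{F}_q}(x)$ requires $f$ not to be an $\ell$-th power for \emph{every} prime $\ell\mid m$. For instance, with $m=4$ and $f(x)=x^2$ (not a fourth power, $d=1$) one has $y^4-x^2=(y^2-x)(y^2+x)$, so $C$ is not geometrically integral and the Hasse--Weil/Jacobian argument collapses, even though the theorem still holds there (indeed $\sum_x\psi(x^2)=\sum_x\psi^2(x)=0$). You therefore need a preliminary reduction --- e.g., replacing $\psi$ by a suitable power and stripping $m$-th-power factors until the Kummer cover really is geometrically integral --- or you must bypass the curve $y^m=f(x)$ altogether and work with the $L$-function of the character $P\mapsto\psi\big(N(f\bmod P)\big)$ on divisors of $\mathbb{F}_q(x)$, whose degree is bounded by the conductor (at most $d+1$) rather than by a genus computation; that is essentially what the cited reference does. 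Two further remarks: your worry about wild ramification when $p\mid m$ is vacuous, since $\psi$ has order $m\mid q-1$, hence $\gcd(m,p)=1$ and the cover is automatically tame; and the claim that each isotypic eigenvalue packet has at most $d-1$ elements does require the exact (not $O(d)$) accounting of the fibres over the zeros of $f$ whose multiplicity is divisible by $m$ and of the point(s) at infinity, which is precisely where the sharp constant $d-1$ is won or lost.
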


The following lemma about quadratic character sums of cubic polynomials will also be used later.

\begin{lemma}\label{20240906lemma3}
  Let $q$ be an odd prime power and let $a,b,c,d\in\mathbb{F}_q$ with $a,d\ne 0$. Then
  \begin{align*}
        & \sum\limits_{x\in\mathbb{F}_q}\eta(ax^3+bx^2+cx+d)\eta(x)    \\
    =\  & -\eta(a)+\sum\limits_{x\in\mathbb{F}_q}\eta(dx^3+cx^2+bx+a).
  \end{align*}
\end{lemma}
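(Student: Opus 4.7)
The plan is to prove the identity via the reciprocal substitution $y = x^{-1}$ on the nonzero terms. First I would observe that the $x = 0$ contribution on the left-hand side vanishes because $\eta(0) = 0$, so the sum effectively ranges over $\mathbb{F}_q^*$. Then, for every $x \in \mathbb{F}_q^*$, I would factor out $x^3$ to write $ax^3 + bx^2 + cx + d = x^3\bigl(a + bx^{-1} + cx^{-2} + dx^{-3}\bigr)$ and apply the multiplicativity of $\eta$ together with $\eta(x^3) = \eta(x)^3 = \eta(x)$. Setting $y = x^{-1}$, this gives
$$\eta(x)\,\eta(ax^3+bx^2+cx+d) \;=\; \eta(x)^2\,\eta(dy^3+cy^2+by+a) \;=\; \eta(dy^3+cy^2+by+a).$$

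As $x$ runs through $\mathbb{F}_q^*$, so does $y = x^{-1}$, so after summation this substitution yields
$$\sum_{x\in\mathbb{F}_q}\eta(ax^3+bx^2+cx+d)\eta(x) \;=\; \sum_{y\in\mathbb{F}_q^*}\eta(dy^3+cy^2+by+a).$$
To complete the sum on the right to one over all of $\mathbb{F}_q$, I would add and subtract the missing $y = 0$ term, whose value is $\eta(a)$ (and is well-defined since $a \neq 0$). This produces
$$\sum_{y\in\mathbb{F}_q^*}\eta(dy^3+cy^2+by+a) \;=\; \sum_{y\in\mathbb{F}_q}\eta(dy^3+cy^2+by+a) \;-\; \eta(a),$$
which is exactly the claimed identity.

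I do not foresee a substantive obstacle: the argument is essentially an exercise in reciprocal substitution on the punctured line. The only care required is the bookkeeping for the boundary term at $x = 0$, which contributes $0$ on the left yet reappears on the right as $-\eta(a)$ when we convert the punctured sum over $\mathbb{F}_q^*$ into a full sum over $\mathbb{F}_q$. The hypothesis $d \neq 0$ is not strictly needed for the identity itself but is natural in the intended application, where the right-hand side should also represent a Weil sum of a genuinely cubic polynomial.
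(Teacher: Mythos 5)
Your proposal is correct and follows essentially the same route as the paper: the reciprocal substitution $y=x^{-1}$ on $\mathbb{F}_q^*$ (the paper writes $\eta(x)=\eta(x^{-3})$ and absorbs it into the argument, which is the same computation as your factoring out $x^3$), followed by restoring the $y=0$ term $\eta(a)$. Your side remark that $d\neq 0$ is not needed for the identity itself is also accurate.
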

\begin{proof}
  We have
  \begin{align*}
        & \sum\limits_{x\in\mathbb{F}_q}\eta(ax^3+bx^2+cx+d)\eta(x)              \\
    =\  & \sum\limits_{x\in\mathbb{F}_q^*}\eta(ax^3+bx^2+cx+d)\eta(x)            \\
    =\  & \sum\limits_{x\in\mathbb{F}_q^*}\eta(ax^3+bx^2+cx+d)\eta(x^{-3})       \\
    =\  & \sum\limits_{x\in\mathbb{F}_q^*}\eta(a+bx^{-1}+cx^{-2}+dx^{-3})        \\
    =\  & \sum\limits_{y\in\mathbb{F}_q^*}\eta(a+by+cy^{2}+dy^3)\quad (y=x^{-1}) \\
    =\  & -\eta(a)+\sum\limits_{x\in\mathbb{F}_q}\eta(dx^3+cx^2+bx+a).
  \end{align*}
\end{proof}

From this point until the end of the paper, we assume that $q=p^n$ is an odd prime power such that $q\equiv 3\ (\mymod 4)$, where $p$ is a prime and $n$ is a positive integer. Let $F_{r,u}$ be the function over $\mathbb{F}_q$ defined by (\ref{20240907equation19}).

\begin{lemma}\label{20240825lemma10}
  For any $a\in\mathbb{F}_{q}^*$ and $b\in\mathbb{F}_{q}$, we have $\delta_{F_{r,-u}}(a,b)=\delta_{F_{r,u}}(a,\frac{b}{(-1)^{r+1}})$ and $\beta_{F_{r,-u}}(a,b)=\beta_{F_{r,u}}(a,\frac{b}{(-1)^{r}})$. In particular, $F_{r,u}$ and $F_{r,-u}$ have the same differential and boomerang spectrum.
\end{lemma}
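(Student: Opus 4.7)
The plan is to exploit the symmetry $\eta(-x)=-\eta(x)$ (which holds because $q\equiv 3\pmod 4$ forces $\eta(-1)=-1$) to relate $F_{r,-u}$ to $F_{r,u}$ composed with negation. Concretely, I would first compute
$$(-1)^r F_{r,u}(-x)=(-1)^r(-x)^r\big(1+u\eta(-x)\big)=x^r\big(1-u\eta(x)\big)=F_{r,-u}(x),$$
obtaining the functional identity $F_{r,-u}(x)=(-1)^r F_{r,u}(-x)$. This is the only algebraic content; the rest is a clean change of variables.

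For the differential claim, I would substitute the identity into the defining equation $F_{r,-u}(x+a)-F_{r,-u}(x)=b$, rewrite it as
$$F_{r,u}(-x-a)-F_{r,u}(-x)=(-1)^r b,$$
and then set $y=-x-a$, so that $-x=y+a$. The equation becomes $F_{r,u}(y)-F_{r,u}(y+a)=(-1)^r b$, equivalently $F_{r,u}(y+a)-F_{r,u}(y)=(-1)^{r+1}b$. Since $y\mapsto -x-a$ is a bijection on $\mathbb{F}_q$, this proves $\delta_{F_{r,-u}}(a,b)=\delta_{F_{r,u}}(a,(-1)^{r+1}b)$, which is the claimed identity because $(-1)^{r+1}b=b/(-1)^{r+1}$.

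For the boomerang claim, I would perform the analogous manipulation on the system
$$\begin{cases}F_{r,-u}(x)-F_{r,-u}(y)=b,\\ F_{r,-u}(x+a)-F_{r,-u}(y+a)=b,\end{cases}$$
rewriting both equations via the functional identity as $F_{r,u}(-x)-F_{r,u}(-y)=(-1)^r b$ and $F_{r,u}(-x-a)-F_{r,u}(-y-a)=(-1)^r b$. Applying the change of variables $x'=-x-a$, $y'=-y-a$ (a bijection on $\mathbb{F}_q^2$), the first equation becomes $F_{r,u}(x'+a)-F_{r,u}(y'+a)=(-1)^r b$ and the second becomes $F_{r,u}(x')-F_{r,u}(y')=(-1)^r b$, so the solution count is exactly $\beta_{F_{r,u}}(a,(-1)^r b)=\beta_{F_{r,u}}(a,b/(-1)^r)$. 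The ``In particular'' statement then follows because the maps $b\mapsto (-1)^{r+1}b$ on $\mathbb{F}_q$ and $(a,b)\mapsto(a,(-1)^r b)$ on $\mathbb{F}_q^*\times\mathbb{F}_q^*$ are bijections, leaving the multisets $\{\omega_i\}$ and $\{\nu_i\}$ invariant.

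There is no real obstacle here: once the functional identity is in hand, everything reduces to bookkeeping of signs and a single substitution. The only point requiring care is to apply $\eta(-1)=-1$ correctly (which is where the hypothesis $q\equiv 3\pmod 4$ is used) and to verify that $-x-a$ runs through $\mathbb{F}_q$ as $x$ does, so the counting argument is legitimate.
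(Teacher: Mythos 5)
Your proposal is correct and takes essentially the same route as the paper: the paper's proof applies the substitution $y=-(x+a)$ directly to the derivative equation, which is exactly your change of variables, merely without first isolating the functional identity $F_{r,-u}(x)=(-1)^rF_{r,u}(-x)$. Your version is if anything slightly cleaner, and you also write out the boomerang case that the paper dismisses as ``similar.''
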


\begin{proof}
  Let $a\in\mathbb{F}_{q}^*$ and $b\in\mathbb{F}_{q}$. By definition, $\delta_{F_{r,-u}}(a,b)$ equals the number of solutions $x\in\mathbb{F}_q$ to the following equation
  $$(x+a)^r\big(1+u\eta(x+a)\big)-x^r\big(1+u\eta(x)\big)=b.$$
  Making the substitution $y=-(x+a)$, we can see that $\delta_{F_{r,-u}}(a,b)$ equals the number of solutions $y\in\mathbb{F}_q$ to the following equation
  $$(y+a)^r\big(1+u\eta(y+a)\big)-y^r\big(1+u\eta(y)\big)=\frac{b}{(-1)^{r+1}}$$
  Hence $\delta_{F_{r,-u}}(a,b)=\delta_{F_{r,u}}(a,\frac{b}{(-1)^{r+1}})$. The assertion for boomerang uniformity can be proved similarly.
\end{proof}

\indent It is known that if $f(x)=x^d$ is a monomial, then $\delta_{f}(a,b)=\delta(1,\frac{b}{a^d})$ and $\beta_{f}(a,b)=\beta_f(1,\frac{b}{a^d})$ for any $a\in\mathbb{F}_{q}^*$ and $b\in\mathbb{F}_{q}$, which implies that $\delta_f=\max_{b\in\mathbb{F}_{q}}\delta_f(1,b)$ and $\beta_f=\max_{b\in\mathbb{F}_{q}^*}\beta_f(1,b)$.  An interesting point is that $F_{r,u}$ has similar properties.

\begin{lemma}\label{20240825lemma7}
  For any $a\in\mathbb{F}_{q}^*$ and $b\in\mathbb{F}_{q}$, we have
  $$\delta_{F_{r,u}}(a,b)=\begin{cases}\delta_{F_{r,u}}(1,\frac{b}{a^r})&\mbox{if}\ \eta(a)=1,\\\delta_{F_{r,u}}(1,\frac{b}{(-1)^{r+1}a^r})&\mbox{if }\eta(a)=-1,\end{cases}$$
  and
  $$\beta_{F_{r,u}}(a,b)=\begin{cases}\beta_{F_{r,u}}(1,\frac{b}{a^r})&\mbox{if}\ \eta(a)=1,\\\beta_{F_{r,u}}(1,\frac{b}{(-1)^{r}a^r})&\mbox{if }\eta(a)=-1.\end{cases}$$
  In particular, we have $\delta_{F_{r,u}}=\max_{b\in\mathbb{F}_{q}}\delta_{F_{r,u}}(1,b)$ and $\beta_{F_{r,u}}=\max_{b\in\mathbb{F}_{q}^*}\beta_{F_{r,u}}(1,b)$.
\end{lemma}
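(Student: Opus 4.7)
The plan is to reduce each count at $(a,b)$ to a count at $(1,b')$ via an affine change of variables in the defining equations, tracking how $\eta$ transforms under that substitution. The multiplicativity $\eta(\alpha\beta)=\eta(\alpha)\eta(\beta)$ together with the hypothesis $q\equiv 3\pmod 4$ (so $\eta(-1)=-1$) will be the only tools needed.

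For the differential count, I will start from
\[
F_{r,u}(x+a)-F_{r,u}(x)=b
\]
and consider two cases according to $\eta(a)$. When $\eta(a)=1$, I substitute $x=ay$, so that $\eta(x)=\eta(y)$, $\eta(x+a)=\eta(y+1)$, and I can factor out $a^r$ from both terms on the left, turning the equation into $F_{r,u}(y+1)-F_{r,u}(y)=b/a^r$; this yields the first formula. When $\eta(a)=-1$, the substitution $x=ay$ fails to eliminate $\eta(a)$, but the substitution $x=-a(y+1)$ does: then $x+a=-ay$, and because $\eta(-a)=-\eta(a)=1$, both $\eta(x+a)=\eta(y)$ and $\eta(x)=\eta(y+1)$. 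Factoring out $(-a)^r$ converts the derivative equation into $F_{r,u}(y)-F_{r,u}(y+1)=b/(-a)^r$, i.e.\ $F_{r,u}(y+1)-F_{r,u}(y)=b/((-1)^{r+1}a^r)$, giving the second formula.

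For the boomerang count, I apply the analogous substitutions simultaneously in both variables of the system
\[
\begin{cases}F_{r,u}(x)-F_{r,u}(y)=b,\\ F_{r,u}(x+a)-F_{r,u}(y+a)=b.\end{cases}
\]
If $\eta(a)=1$, setting $x=ax'$, $y=ay'$ scales each equation by $a^r$, yielding the same system at $(1,b/a^r)$. If $\eta(a)=-1$, setting $x=-a(x'+1)$, $y=-a(y'+1)$ swaps the roles of the two equations (since $x+a=-ax'$, $y+a=-ay'$), each equation picks up the factor $(-a)^r$, and both resulting equations equal $b/(-a)^r=b/((-1)^r a^r)$, giving the system defining $\beta_{F_{r,u}}(1,b/((-1)^r a^r))$.

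The last assertion follows immediately: every value of $\delta_{F_{r,u}}(a,b)$ with $a\neq 0$ coincides with some $\delta_{F_{r,u}}(1,b')$, and similarly for $\beta_{F_{r,u}}$, so the maxima over $a\in\mathbb{F}_q^*$ agree with the maxima at $a=1$. The only real subtlety is bookkeeping in the non-square case; the trick is to notice that, because $\eta(-1)=-1$ forces $\eta(-a)=1$ when $\eta(a)=-1$, the seemingly asymmetric substitution $x\mapsto -a(y+1)$ restores the correct $\eta$-values while simultaneously interchanging $x$ and $x+a$, which accounts for the sign $(-1)^{r+1}$ in the differential case and $(-1)^{r}$ in the boomerang case.
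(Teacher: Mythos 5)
Your proposal is correct and follows essentially the same route as the paper: the paper first divides by $a^r$ and substitutes $y=x/a$, then applies $z=-(y+1)$ when $\eta(a)=-1$, which is exactly the composite substitution $x=-a(y+1)$ you use directly, and the sign bookkeeping $(-1)^{r+1}$ versus $(-1)^r$ matches. No gaps.
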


\begin{proof}
  Let $a\in\mathbb{F}_{q}^*$ and $b\in\mathbb{F}_{q}$. By definition, $ \delta_{F_{r,u}}(a,b)$ equals the number of solutions $x\in\mathbb{F}_q$ to the following equation
  $$(x+a)^r\big(1+u\eta(x+a)\big)-x^r\big(1+u\eta(x)\big)=b,$$
  which becomes
  $$(\frac{x}{a}+1)^r\big(1+u\eta(a)\eta(\frac{x}{a}+1)\big)  -(\frac{x}{a})^r\big(1+u\eta(a)\eta(\frac{x}{a})\big)=\frac{b}{a^r}$$
  after dividing both sides by $a^r$. Making the substitution $y=\frac{x}{a}$, we can see that $\delta_{F_{r,u}}$ equals the number of solutions $y\in\mathbb{F}_q$ to the following equation
  $$(y+1)^r\big(1+u\eta(a)\eta(y+1)\big)-y^r\big(1+u\eta(a)\eta(y)\big)=\frac{b}{a^r}.$$
  If $\eta(a)=1$, it is clear that $\delta_{F_{r,u}}(a,b)=\delta_{F_{r,u}}(1,\frac{b}{a^r})$. If $\eta(a)=-1$, making the substitution $z=-(y+1)$, then we can see that $\delta_{F_{r,u}}(a,b)$ equals the number of solutions $z\in\mathbb{F}_q$ to the following equation
  $$(z+1)^r\big(1+u\eta(z+1)\big)-z^r\big(1+u\eta(z)\big)=\frac{b}{(-1)^{r+1}a^r}.$$
  Hence $\delta_{F_{r,u}}(a,b)=\delta_{F_{r,u}}(1,\frac{b}{(-1)^{r+1}a^r})$. The assertion for boomerang uniformity can be proved similarly.
\end{proof}

\section{The Differential Properties of $F_{2,u}$ with $u\in\mathbb{F}_q\setminus\{\pm 1\}$}\label{20240907section2}

In this section, we conduct an initial study of the differential properties of $F_{2,u}$ under the assumption that $u\in\mathbb{F}_q\setminus\{\pm 1\}$.

By Lemma \ref{20240825lemma7}, in order to compute the differential uniformity of $F_{2,u}$, we only need to consider the numbers $\delta_{F_{2,u}}(1,b)$ $(b\in\mathbb{F}_q)$. We have
\begin{align*}
  D_1F_{2,u}(x) & =F_{2,u}(x+1)-F_{2,u}(x)                                \\
                & =(x+1)^2\big(1+u\eta(x+1)\big)-x^2\big(1+u\eta(x)\big).
\end{align*}
Then $D_1F_{2,u}(0)=u+1$ and $D_1F_{2,u}(-1)=u-1$.

Let $\tau_1=\frac{1+u}{u}$ and $\tau_2=\frac{1-u}{u}$. Then $\tau_1+\tau_2=\frac{2}{u}$ and $\tau_1-\tau_2=2$. Since $u\not\in\{\pm1\}$, we have $\tau_i\ne 0$ for $i=1,2$. For any $i,j\in\{0,1\}$ and $b\in\mathbb{F}_q$, we put
$$A_{ij}(b)=\{x\in C_{ij}:\ D_1F_{2,u}(x)=b\}.$$

\noindent{\textbf{Case 1.}}\ If $x\in C_{00}$, then
$$D_1F_{2,u}(x)=(u+1)\big((x+1)^2-x^2\big)=(1+u)(2x+1).$$
The unique possible solution of $D_1F_{2,u}(x)=b$ is $x=\frac{b-(1+u)}{2(1+u)}$. Moreover, we have
\begin{equation}\label{20240825equation4}
  \# A_{00}(b)=\begin{cases}1&\mbox{if}\ \frac{b-(1+u)}{2(1+u)}\in C_{00},\ \mbox{i.e., }\frac{b\pm (1+u)}{2(1+u)}\in C_0,\\0&\mbox{otherwise}.\end{cases}
\end{equation}
\noindent{\textbf{Case 2.}}\ If $x\in C_{11}$, then
$$D_1F_{2,u}(x)=(1-u)\big((x+1)^2-x^2\big)=(1-u)(2x+1).$$
The unique possible solution of $D_1F_{2,u}(x)=b$ is $x=\frac{b-(1-u)}{2(1-u)}$. Moreover, we have
\begin{equation}\label{20240825equation5}
  \# A_{11}(b)=\begin{cases}1&\mbox{if}\ \frac{b-(1-u)}{2(1-u)}\in C_{11},\ \mbox{i.e.,\ }\frac{b\pm (1-u)}{2(1-u)}\in C_1,\\0&\mbox{otherwise}.\end{cases}
\end{equation}
\noindent{\textbf{Case 3.}}\ If $x\in C_{01}$, then
$$D_1F_{2,u}(x)=-2ux^2+2(1-u)x+(1-u).$$
Consider the equation
\begin{equation}\label{20240825equation6}
  D_1F_{2,u}(x)=b\ \Leftrightarrow\ x^2-\tau_2x+\frac{1}{2}(\frac{b}{u}-\tau_2)=0.
\end{equation}
The discriminant of this quadratic equation is $\Delta_{01}=\tau_1\tau_2-2\frac{b}{u}$. Let $x_1$, $x_2$ be the two solutions (possibly equal) of this equation in $\overline{\mathbb{F}_q}$. We have $x_1x_2=\frac{1}{2}(\frac{b}{u}-\tau_2)$ and $(x_1+1)(x_2+1)=\frac{1}{2}(\frac{b}{u}+\tau_1)$. Moreover, $\# A_{01}(b)=2$ if and only if
\begin{align}\label{20240829equation1}
         & \begin{cases}
             \eta(\Delta_{01})=1,   \\
             \eta(x_1)=\eta(x_2)=1, \\
             \eta(x_1+1)=\eta(x_2+1)=-1,
           \end{cases}                                                                                           \notag           \\
  \iff\  & \begin{cases}
             \eta(\tau_1\tau_2-2\frac{b}{u})=1,                       \\
             \eta(\tau_2\pm\sqrt{\tau_1\tau_2-2\frac{b}{u}})=\eta(2), \\
             \eta(\tau_1\pm\sqrt{\tau_1\tau_2-2\frac{b}{u}})=-\eta(2).
           \end{cases}
\end{align}

\noindent{\textbf{Case 4.}}\ If $x\in C_{10}$, then
$$D_1F_{2,u}(x)=2ux^2+2(1+u)x+(1+u).$$
Consider the equation
\begin{equation}\label{20240825equation11}
  D_1F_{2,u}(x)=b\ \Leftrightarrow\ x^2+\tau_1x+\frac{1}{2}(\tau_1-\frac{b}{u})=0.
\end{equation}
The discriminant of this quadratic equation is $\Delta_{10}=\tau_1\tau_2+2\frac{b}{u}$. Let $x_1$, $x_2$ be the two solutions (possibly equal) of this equation in $\overline{\mathbb{F}_q}$. We have $x_1x_2=\frac{1}{2}(\tau_1-\frac{b}{u})$ and $(x_1+1)(x_2+1)=-\frac{1}{2}(\tau_2+\frac{b}{u})$. Moreover, $\# A_{10}(b)=2$ if and only if
\begin{align}\label{20240905equation10}
         & \begin{cases}
             \eta(\Delta_{10})=1,    \\
             \eta(x_1)=\eta(x_2)=-1, \\
             \eta(x_1+1)=\eta(x_2+1)=1.
           \end{cases}\notag                                                                                                \\
  \iff\  & \begin{cases}
             \eta(\tau_1\tau_2+2\frac{b}{u})=1,                         \\
             \eta(-\tau_1\pm\sqrt{\tau_1\tau_2+2\frac{b}{u}})=-\eta(2), \\
             \eta(-\tau_2\pm\sqrt{\tau_1\tau_2+2\frac{b}{u}})=\eta(2).
           \end{cases}
\end{align}

\begin{lemma}\label{20240825lemma9}
  Assume that $\eta(1+u)=\eta(u)$. If $\# A_{10}(b)=2$, then $\# A_{00}(b)=0$.
\end{lemma}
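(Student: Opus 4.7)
The plan is to contradict the sign condition required by $\#A_{00}(b) \geq 1$ (under the hypothesis $\eta(1+u) = \eta(u)$) using a sign condition that is directly forced by the Case 4 hypothesis $\#A_{10}(b) = 2$. The key quantity to track is $\eta\bigl(b-(1+u)\bigr)$.

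First I would apply Vieta's formulas to the Case 4 quadratic (\ref{20240825equation11}): the two roots $x_1, x_2$ satisfy
$$x_1 x_2 = \tfrac{1}{2}\bigl(\tau_1 - b/u\bigr) = \frac{1+u-b}{2u}.$$
Among the conditions in (\ref{20240905equation10}) for $\#A_{10}(b) = 2$, one has $\eta(x_1) = \eta(x_2) = -1$, so $\eta(x_1 x_2) = 1$. Hence $\eta\bigl((1+u-b)/(2u)\bigr) = 1$, giving $\eta(1+u-b) = \eta(2u)$. Since $q \equiv 3 \pmod 4$ implies $\eta(-1) = -1$, multiplying through yields the key identity
$$\eta\bigl(b-(1+u)\bigr) = -\eta(2u). \qquad (\ast)$$

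Next I would read off from (\ref{20240825equation4}) the necessary condition for $\#A_{00}(b) = 1$: we need $\frac{b-(1+u)}{2(1+u)} \in C_0$, hence in particular $\eta\bigl(b-(1+u)\bigr) = \eta\bigl(2(1+u)\bigr) = \eta(2)\eta(1+u)$. Invoking the hypothesis $\eta(1+u) = \eta(u)$, the right-hand side becomes $\eta(2u)$, which contradicts $(\ast)$ since $\eta(2u) \in \{\pm 1\}$. Therefore $\#A_{00}(b) = 0$.

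The argument is essentially a one-step sign comparison extracting information from the product of roots, so I do not anticipate any real obstacle. The only degenerate case worth a passing remark is $b = 1+u$, which would make $\eta\bigl(b-(1+u)\bigr) = 0$; but this case is automatically excluded by $(\ast)$ because $\eta(2u) \neq 0$, and even directly by (\ref{20240825equation4}) since $0 \notin C_{00}$.
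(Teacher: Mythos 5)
Your proof is correct and follows essentially the same route as the paper's: both extract $\eta(x_1x_2)=1$ from Vieta's formulas applied to the Case 4 quadratic, translate this via the hypothesis $\eta(1+u)=\eta(u)$ and $\eta(-1)=-1$ into the sign of $\eta\bigl(\tfrac{b-(1+u)}{2(1+u)}\bigr)$, and contradict the condition in (\ref{20240825equation4}). Your added remark on the degenerate case $b=1+u$ is a minor point the paper leaves implicit, but the argument is the same.
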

\begin{proof}
  Since $\# A_{10}(b)=2$, we have $x_1,x_2\in C_{10}$, which implies that
  $$1=\eta(x_1)\eta(x_2)=\eta(x_1x_2)=\eta(\frac{(1+u)-b}{2u}).$$
  Since $\eta(1+u)=\eta(u)$, we have $\eta(\frac{b-(1+u)}{2(1+u)})=-1$. By (\ref{20240825equation4}), we have $\# A_{00}(b)=0$.
\end{proof}

\begin{lemma}\label{20240825lemma8}
  Assume that $\eta(1+u)=-\eta(u)$. If $\# A_{01}(b)=2$, then $\# A_{00}(b)=0$.
\end{lemma}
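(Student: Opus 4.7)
My plan is to adapt the proof of Lemma~\ref{20240825lemma9} to Case~3, replacing the use of the product $x_1 x_2$ by the product $(x_1+1)(x_2+1)$. The motivation for this substitution is that the criterion~(\ref{20240825equation4}) for $\# A_{00}(b) \ne 0$ requires simultaneously $\frac{b-(1+u)}{2(1+u)} \in C_0$ and $\frac{b+(1+u)}{2(1+u)} \in C_0$; the ``$-$'' branch was blocked in Lemma~\ref{20240825lemma9} by means of the Vieta relation from Case~4, so here I aim to block the ``$+$'' branch by means of the Vieta relation from Case~3.

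Concretely, the hypothesis $\# A_{01}(b) = 2$ forces both roots $x_1, x_2$ of~(\ref{20240825equation6}) to lie in $C_{01}$, so in particular $\eta(x_1+1) = \eta(x_2+1) = -1$ and hence $\eta\big((x_1+1)(x_2+1)\big) = 1$. Substituting the Vieta relation $(x_1+1)(x_2+1) = \tfrac{1}{2}(b/u + \tau_1) = \frac{b+(1+u)}{2u}$ recorded in Case~3, I obtain the identity $\eta\big(b+(1+u)\big) = \eta(2u) = \eta(2)\eta(u)$. Plugging this into the expansion of $\eta\big(\frac{b+(1+u)}{2(1+u)}\big) = \eta\big(b+(1+u)\big)\eta(2)\eta(1+u)$, the two $\eta(2)$ factors cancel and what remains is $\eta(u)\eta(1+u)$, which equals $-1$ under the hypothesis $\eta(1+u) = -\eta(u)$. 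By~(\ref{20240825equation4}), this forces $\# A_{00}(b) = 0$.

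I do not expect any real obstacle: the whole argument is a short symbolic manipulation, structurally identical to Lemma~\ref{20240825lemma9}. The only conceptual point worth noting is that the two branches of~(\ref{20240825equation4}) correspond exactly to the two defining conditions $\eta(x)=1$ and $\eta(x+1)=1$ of $C_{00}$, so Lemma~\ref{20240825lemma9} and the present statement are the two halves of a single bookkeeping argument, one blocking each condition in turn. Incidentally, the factor $\eta(-1) = -1$ coming from $q \equiv 3\ (\mymod 4)$ does not make an explicit appearance in my chain of equalities, since I work with the sum $b+(1+u)$ rather than with the difference $(1+u)-b$.
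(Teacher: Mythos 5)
Your proposal is correct and is essentially identical to the paper's proof: both use $\eta\big((x_1+1)(x_2+1)\big)=1$ together with the Vieta relation $(x_1+1)(x_2+1)=\frac{b+(1+u)}{2u}$ from Case~3 and the hypothesis $\eta(1+u)=-\eta(u)$ to conclude $\eta\big(\tfrac{b+(1+u)}{2(1+u)}\big)=-1$, which blocks the ``$+$'' condition in~(\ref{20240825equation4}). Your chain of character identities is a correct (if slightly more verbose) spelling-out of the paper's one-line deduction.
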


\begin{proof}
  Since $\# A_{01}(b)=2$, we have $x_1,x_2\in C_{01}$, which implies that
  \begin{align*}
    1 & =\eta(x_1+1)\eta(x_2+1)                                 \\
      & =\eta\big((x_1+1)(x_2+1)\big)=\eta(\frac{b+(1+u)}{2u}).
  \end{align*}
  Since $\eta(1+u)=-\eta(u)$, we have $\eta(\frac{b+(1+u)}{2(1+u)})=-1$. By (\ref{20240825equation4}), we have $\# A_{00}(b)=0$.
\end{proof}

\begin{lemma}\label{20240825lemma11}
  Assume that $\eta(1-u)=\eta(u)$. If $\# A_{01}(b)=2$, then $\# A_{11}(b)=0$.
\end{lemma}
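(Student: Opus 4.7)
The plan is to mirror the template of Lemmas \ref{20240825lemma9} and \ref{20240825lemma8}: the hypothesis $\#A_{01}(b)=2$ forces the product $x_1x_2$ of the two roots of the Case~3 quadratic (\ref{20240825equation6}) to be a nonzero square, and by Vieta's formulas this translates into a $\eta$-condition on a linear expression in $b$. Combining this with the hypothesis $\eta(1-u)=\eta(u)$ will show that $\frac{b-(1-u)}{2(1-u)}$, the only candidate element from Case~2 (\ref{20240825equation5}), is a nonzero square and therefore cannot lie in $C_{11}$.

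Concretely, I would first use that $x_1,x_2\in C_{01}$ implies $\eta(x_1)=\eta(x_2)=1$, hence $\eta(x_1x_2)=1$. Applying Vieta to (\ref{20240825equation6}) gives $x_1x_2=\tfrac{1}{2}\bigl(\tfrac{b}{u}-\tau_2\bigr)=\tfrac{b-(1-u)}{2u}$, so
\[
\eta\!\left(\frac{b-(1-u)}{2u}\right)=1.
\]

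Next, I would rewrite the quantity appearing in (\ref{20240825equation5}) as
\[
\eta\!\left(\frac{b-(1-u)}{2(1-u)}\right)=\eta\!\left(\frac{b-(1-u)}{2u}\right)\cdot\eta(u)\cdot\eta(1-u),
\]
and then invoke the hypothesis $\eta(1-u)=\eta(u)$ to collapse $\eta(u)\eta(1-u)=\eta(u)^2=1$. This yields $\eta\bigl(\tfrac{b-(1-u)}{2(1-u)}\bigr)=1$; in particular, $\tfrac{b-(1-u)}{2(1-u)}\neq 0$ and it is a square, so it does not belong to $C_{11}$. By (\ref{20240825equation5}) this forces $\#A_{11}(b)=0$.

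There is no substantive obstacle; the argument is pure bookkeeping with the quadratic character, and unlike the Case~4 analysis it does not require dealing with extra sign factors such as $\eta(-1)$. The only point requiring a brief verification is that the chain of $\eta$-manipulations is justified (i.e.\ none of the denominators vanish), which follows because $\eta\bigl(\tfrac{b-(1-u)}{2u}\bigr)=1$ already rules out $b=1-u$.
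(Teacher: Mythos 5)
Your proposal is correct and follows essentially the same route as the paper's own proof: extract $\eta(x_1x_2)=\eta\bigl(\tfrac{b-(1-u)}{2u}\bigr)=1$ from $x_1,x_2\in C_{01}$ via Vieta, use $\eta(1-u)=\eta(u)$ to convert this into $\eta\bigl(\tfrac{b-(1-u)}{2(1-u)}\bigr)=1$, and conclude from (\ref{20240825equation5}). The only difference is that you spell out the intermediate character manipulations and the nonvanishing of the numerator, which the paper leaves implicit.
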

\begin{proof}
  Since $\# A_{01}(b)=2$, we have $x_1,x_2\in C_{01}$, which implies that
  $$1=\eta(x_1)\eta(x_2)=\eta(x_1x_2)=\eta(\frac{b-(1-u)}{2u}).$$
  Since $\eta(1-u)=\eta(u)$, we have $\eta(\frac{b-(1-u)}{2(1-u)})=1$. By (\ref{20240825equation5}), we have $\# A_{11}(b)=0$.
\end{proof}

\begin{lemma}\label{20240828lemma2}
  Assume that $\eta(1-u)=-\eta(u)$. If $\# A_{10}(b)=2$, then $\# A_{11}(b)=0$.
\end{lemma}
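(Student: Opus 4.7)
The plan is to mirror the argument used in the three preceding lemmas (\ref{20240825lemma9}, \ref{20240825lemma8}, \ref{20240825lemma11}), exploiting the symmetry between Cases 3 and 4. Since $\#A_{10}(b)=2$, the two roots $x_1,x_2$ of (\ref{20240825equation11}) both lie in $C_{10}$, so in particular $x_1+1, x_2+1\in C_0$, and therefore their product is a square in $\mathbb{F}_q^*$.

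Next, I would use the product formula supplied by Case 4, namely $(x_1+1)(x_2+1)=-\frac{1}{2}\bigl(\tau_2+\frac{b}{u}\bigr)=-\frac{b+(1-u)}{2u}$. Applying $\eta$ and using $\eta(-1)=-1$ (which holds since $q\equiv 3\ (\mymod 4)$), the squareness of the product translates into $\eta\bigl(\frac{b+(1-u)}{2u}\bigr)=-1$. To convert this into information about the denominator $2(1-u)$ that appears in (\ref{20240825equation5}), I would multiply by $\eta\bigl(\frac{u}{1-u}\bigr)$; the hypothesis $\eta(1-u)=-\eta(u)$ gives $\eta\bigl(\frac{u}{1-u}\bigr)=-1$, so the two signs cancel and $\eta\bigl(\frac{b+(1-u)}{2(1-u)}\bigr)=1$. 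In other words, $\frac{b+(1-u)}{2(1-u)}\in C_0$, hence \emph{not} in $C_1$.

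Finally, invoking (\ref{20240825equation5}), which says $\#A_{11}(b)=1$ requires \emph{both} $\frac{b+(1-u)}{2(1-u)}$ and $\frac{b-(1-u)}{2(1-u)}$ to lie in $C_1$, we conclude $\#A_{11}(b)=0$. I do not anticipate any genuine obstacle here: the proof is a direct adaptation of Lemma \ref{20240825lemma11}, with $C_{01}$ replaced by $C_{10}$, the product $x_1x_2$ replaced by $(x_1+1)(x_2+1)$, and $\eta(u)$ versus $\eta(1-u)$ combining with the extra $-1$ inherited from Case 4 exactly as required.
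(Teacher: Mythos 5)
Your proposal is correct and follows essentially the same argument as the paper's proof: both deduce $\eta\big((x_1+1)(x_2+1)\big)=1$ from $x_1,x_2\in C_{10}$, use the product formula $(x_1+1)(x_2+1)=-\frac{b+(1-u)}{2u}$ together with $\eta(-1)=-1$ and the hypothesis $\eta(1-u)=-\eta(u)$ to get $\eta\big(\frac{b+(1-u)}{2(1-u)}\big)=1$, and then conclude via (\ref{20240825equation5}). No gaps.
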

\begin{proof}
  Since $\# A_{10}(b)=2$, we have $x_1,x_2\in C_{10}$, which implies that
  \begin{align*}
    1 & =\eta(x_1+1)\eta(x_2+1)                                  \\
      & =\eta\big((x_1+1)(x_2+1)\big)=\eta(-\frac{b+(1-u)}{2u}).
  \end{align*}
  Since $\eta(1-u)=-\eta(u)$, we have $\eta(\frac{b+(1-u)}{2(1-u)})=1$. By (\ref{20240825equation5}), we have $\# A_{11}(b)=0$.
\end{proof}

\begin{lemma}\label{20240904lemma3}
  For any $u\in\mathbb{F}_q\setminus\{0,\pm 1\}$, we have $\delta_{F_{2,u}}(1,u\pm 1)\le 4$.
\end{lemma}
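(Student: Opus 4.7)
The plan is to bound $\delta_{F_{2,u}}(1,u\pm 1)$ by handling the two values of $b$ separately and showing that in each case, the sum of contributions from the five pieces $\{0\},\{-1\},C_{00},C_{01},C_{10},C_{11}$ is at most $4$. The key observations are that the ``exceptional'' solutions $x=0$ (for $b=u+1$) and $x=-1$ (for $b=u-1$) force certain quadratics in Cases 3--4 above to have a \emph{spurious} root that does not lie in the required $C_{ij}$, and that incompatible sign conditions on $\eta(u)$ prevent two of the linear contributions from being simultaneously positive.

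For $b=u+1$: I would first note that $D_1F_{2,u}(0)=u+1$ contributes one solution while $x=-1$ does not. Substituting $b=u+1$ into formula (\ref{20240825equation4}) shows the Case 1 candidate is $x=0\notin C_{00}$, so $\# A_{00}(u+1)=0$. Substituting $b=u+1$ into the Case 4 equation (\ref{20240825equation11}) collapses it to $x(x+\tau_1)=0$, whose roots are $0$ and $-\tau_1$; since $0\notin C_{10}$ this gives $\# A_{10}(u+1)\le 1$, with equality forcing $\eta(-\tau_1)=-1$, equivalently $\eta(u)=-1$. On the other hand the Case 2 candidate is $x=\tfrac{u}{1-u}$, which belongs to $C_{11}$ only when $\eta(u)=1$. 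These conditions are mutually exclusive, so $\# A_{10}(u+1)+\# A_{11}(u+1)\le 1$. Combining with the generic bound $\# A_{01}(u+1)\le 2$ produces $\delta_{F_{2,u}}(1,u+1)\le 1+0+2+1=4$.

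For $b=u-1$: Here $D_1F_{2,u}(-1)=u-1$ supplies one solution and $x=0$ does not. The Case 2 candidate via (\ref{20240825equation5}) is $x=-1\notin C_{11}$, so $\# A_{11}(u-1)=0$. Substituting $b=u-1$ into (\ref{20240825equation11}) yields the quadratic $x^2+\tau_1 x+\tfrac{1}{u}=0$, and a direct check confirms that $x=-1$ is one of its two roots; therefore $\# A_{10}(u-1)\le 1$, with the other root $-\tfrac{1}{u}$ lying in $C_{10}$ only if $\eta(u)=1$. The Case 1 candidate is $x=-\tfrac{1}{1+u}$, which lies in $C_{00}$ only when $\eta(u)=-1$, so the two linear contributions cannot both be positive, giving $\# A_{00}(u-1)+\# A_{10}(u-1)\le 1$. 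Together with $\# A_{01}(u-1)\le 2$, this yields $\delta_{F_{2,u}}(1,u-1)\le 1+1+2+0=4$.

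The main obstacle is bookkeeping: verifying that the Case 4 quadratic really has $0$ (respectively $-1$) as one of its two roots when $b=u\pm 1$, and identifying correctly the sign of $\eta(u)$ demanded by each of the potentially surviving contributions so that the exclusions line up. The argument does not require the full force of the conditions (\ref{20240829equation1}) and (\ref{20240905equation10}), only the sign of $\eta(u)$ extracted from them, which keeps the case analysis compact. No appeal to the bound $\delta_{F_{2,u}}\le 5$ from \cite{budaghyan2024arithmetization} is needed.
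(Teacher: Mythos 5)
Your proposal is correct and follows essentially the same route as the paper: decompose by the sets $C_{ij}$, observe that the Case 4 quadratic degenerates (having $0$, resp.\ $-1$, as a root when $b=u+1$, resp.\ $b=u-1$), and rule out simultaneous contributions by the incompatible signs of $\eta(u)$; you simply spell out the $b=u-1$ case that the paper dismisses as ``similar,'' and your computations check out. One small misattribution: for $b=u+1$ the necessary condition $\eta(u)=-1$ for $-\tau_1\in C_{10}$ comes from $\eta(-\tau_1+1)=\eta(-1/u)=1$, not from $\eta(-\tau_1)=-1$ (which instead encodes $\eta(1+u)=\eta(u)$), but the conclusion you actually use is correct.
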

\begin{proof}
  We only prove that $\delta_{F_{2,u}}(1,u+1)\le 4$; the proof for $\delta_{F_{2,u}}(1,u-1)\le 4$ is similar. We know that $D_1F_{2,u}(0)=u+1$. Since $\frac{u+1-(1+u)}{2(1+u)}=0\not\in C_0$, by (\ref{20240825equation4}), we have $\# A_{00}(u+1)=0$. Since $\frac{u+1+(1-u)}{2(1-u)}=\frac{1}{1-u}$ and $\frac{u+1-(1-u)}{2(1-u)}=\frac{u}{1-u}$, by (\ref{20240825equation5}), we have
  $$\# A_{11}(u+1)=\begin{cases}1&\mbox{if}\ \eta(1-u)=-1,\ \eta(u)=1,\\0&\mbox{otherwise}.\end{cases}$$
  Note that $\Delta_{10}=\tau_1^2$, which implies that the two solutions of the equation (\ref{20240825equation11}) are $0$ and $-\tau_1$. Hence
  $$\# A_{10}(u+1)=\begin{cases}1&\mbox{if}\ \eta(1+u)=\eta(u)=-1,\\0&\mbox{otherwise}.\end{cases}$$
  It follows that $\# A_{11}(u+1)+\# A_{10}(u+1)\le 1$ and thus $\delta_{F_{2,u}}(1,u+1)\le 4$.
\end{proof}

\begin{corollary}\label{20240828corol1}
  For any $u\in\mathbb{F}_q\setminus\{0,\pm 1\}$, we have $\delta_{F_{2,u}}\le 5$. Moreover, we have following conclusions:
  \begin{enumerate}
    \item if $\eta(1+u)=\eta(1-u)$, then $\delta_{F_{2,u}}\le 4$.
    \item if $\eta(1+u)=\eta(u-1)=\eta(u)$, then for any $b\in\mathbb{F}_q$, $\delta_{F_{2,u}}(1,b)=5$ if and only if $\# A_{00}(b)=\# A_{11}(b)=\# A_{10}(b)=1$ and $\# A_{01}(b)=2$.
    \item if $\eta(1+u)=\eta(u-1)=-\eta(u)$, then for any $b\in\mathbb{F}_q$, $\delta_{F_{2,u}}(1,b)=5$ if and only if $\# A_{00}(b)=\# A_{11}(b)=\# A_{01}(b)=1$ and $\# A_{10}(b)=2$.
  \end{enumerate}
\end{corollary}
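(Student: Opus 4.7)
The plan is to combine three ingredients from Section~\ref{20240907section2}: the reduction $\delta_{F_{2,u}}=\max_b\delta_{F_{2,u}}(1,b)$ of Lemma~\ref{20240825lemma7}, the boundary estimate $\delta_{F_{2,u}}(1,u\pm 1)\le 4$ of Lemma~\ref{20240904lemma3}, and the four ``exclusion'' Lemmas~\ref{20240825lemma9}--\ref{20240828lemma2}. The overall strategy is to enumerate the possible profiles $\bigl(\#A_{00}(b),\#A_{01}(b),\#A_{10}(b),\#A_{11}(b)\bigr)$ under each sign configuration of $\eta(1\pm u)$ versus $\eta(u)$.

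For $b\notin\{u\pm 1\}$, I would first note that $0,-1\notin\bigcup_{i,j}C_{ij}$ while $D_1F_{2,u}(0)=u+1$ and $D_1F_{2,u}(-1)=u-1$, so the count decomposes cleanly as
\[\delta_{F_{2,u}}(1,b)=\#A_{00}(b)+\#A_{01}(b)+\#A_{10}(b)+\#A_{11}(b),\]
and the bounds from Cases~1--4 give $\#A_{00}(b),\#A_{11}(b)\le 1$ and $\#A_{01}(b),\#A_{10}(b)\le 2$, hence the a~priori estimate $\delta_{F_{2,u}}(1,b)\le 6$. The cases $b\in\{u\pm 1\}$ are already covered by Lemma~\ref{20240904lemma3}, so it remains to sharpen this to~$5$ (and to~$4$ under the hypothesis of (1)).

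The main step is a four-fold case analysis on the signs $\eta(1+u)/\eta(u)$ and $\eta(1-u)/\eta(u)$, which determines which two of the four exclusion lemmas apply. When $\eta(1+u)=\eta(1-u)$, the two applicable lemmas are triggered by \emph{different} events---one by $\#A_{01}(b)=2$ and the other by $\#A_{10}(b)=2$---and they kill different members of $\{\#A_{00},\#A_{11}\}$. A routine enumeration over the nine profiles $(\#A_{01}(b),\#A_{10}(b))\in\{0,1,2\}^2$ shows that the total is at most~$4$ in every profile, yielding part~(1). When $\eta(1+u)\ne\eta(1-u)$, the two applicable lemmas are instead triggered by the \emph{same} event: by $\#A_{10}(b)=2$ when $\eta(1+u)=\eta(u-1)=\eta(u)$ (part~(2)), and by $\#A_{01}(b)=2$ when $\eta(1+u)=\eta(u-1)=-\eta(u)$ (part~(3)). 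In each of these, either both lemmas fire (forcing $\#A_{00}(b)=\#A_{11}(b)=0$ and total~$\le 4$), or neither fires, in which case the unique profile attaining the total~$5$ is $(\#A_{00},\#A_{01},\#A_{10},\#A_{11})=(1,2,1,1)$ in part~(2) and $(1,1,2,1)$ in part~(3), matching the claim.

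The main obstacle is purely the bookkeeping of this case analysis: correctly pairing each of the four exclusion lemmas with its activating hypothesis on $\eta(1\pm u)/\eta(u)$ and tracking which residual cells survive in each of the nine $(\#A_{01},\#A_{10})$-profiles. Once this table is assembled, the conclusions follow mechanically, with no further computation beyond what Section~\ref{20240907section2} has already provided.
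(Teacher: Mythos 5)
Your proposal is correct and follows essentially the same route as the paper: the $\le 5$ bound comes from Lemma~\ref{20240904lemma3} together with the fact that, for every admissible $u$, at least one exclusion lemma forbids the profile summing to~$6$, and parts (1)--(3) come from pairing the two applicable exclusion lemmas according to the signs of $\eta(1\pm u)/\eta(u)$ and checking which sum-to-$5$ profiles survive. Your observation that the two active lemmas share a trigger precisely when $\eta(1+u)\ne\eta(1-u)$ is exactly the mechanism behind the paper's case distinction, so the arguments coincide in substance.
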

\begin{proof}
  The first assertion follows immediately from Lemma \ref{20240825lemma9}, Lemma \ref{20240825lemma8} and Lemma \ref{20240904lemma3}. It is clear that for any $b\in\mathbb{F}_q$, $\delta_{F_{2,u}}(1,b)=5$ if and only if one of the following conditions holds:
  \begin{enumerate}[label=\roman*)]
    \item $\# A_{00}(b)=\# A_{11}(b)=\# A_{10}(b)=1$, $\# A_{01}(b)=2$;
    \item $\# A_{00}(b)=\# A_{11}(b)=\# A_{01}(b)=1$, $\# A_{10}(b)=2$;
    \item $\# A_{00}(b)=1$,\ $\# A_{11}(b)=0$,\ $\# A_{01}(b)=\# A_{10}(b)=2$;
    \item $\# A_{00}(b)=0$,\ $\# A_{11}(b)=1$,\ $\# A_{01}(b)=\# A_{10}(b)=2$.
  \end{enumerate}
  \begin{enumerate}
    \item Assume that $\eta(1+u)=\eta(1-u)=\eta(u)$. By Lemma \ref{20240825lemma9}, neither of the condition ii) and the condition iii) can hold. By Lemma \ref{20240825lemma11}, neither of the condition i) and the condition iv) can hold. Hence none of the conditions i)-iv) can hold, which implies that $\delta_{F_{2,u}}\le 4$.\\
          \indent\quad Assume that $\eta(1+u)=\eta(1-u)=-\eta(u)$. By Lemma,\ref{20240825lemma8}, neither of the conditions i) and iii) can hold. By Lemma \ref{20240828lemma2}, neither of the conditions ii) and iv) can hold. Hence, none of the conditions i)-iv) can hold, which implies that $\delta_{F_{2,u}}\le 4$.
    \item  By Lemma \ref{20240825lemma9}, neither of the conditions ii) and iii) can hold. By Lemma \ref{20240828lemma2}, neither of the conditions ii) and iv) can hold. Hence only the condition i) can hold.
    \item By Lemma \ref{20240825lemma8}, neither of the conditions i) and iii) can hold. By Lemma \ref{20240825lemma11}, neither of the conditions i) and iv) can hold. Hence, only condition ii) can hold.
  \end{enumerate}
\end{proof}

\section{The Differential Uniformity of $F_{2,u}$ with $u\in\mathbb{F}_q\setminus\mathcal{U}$}\label{20240907section3}

In this section, we determine the differential uniformity of $F_{2,u}$ for any $u\in\mathbb{F}_q\setminus\mathcal{U}$. For the sake of notation simplicity, we use $[m]$ to denote the set $\{1,\cdots,m\}$ for any positive integer $m$.

\begin{theorem}\label{20240905them2}
  If $q\ge 27535^2$, then for any $u\in\mathbb{F}_q\setminus\mathcal{U}$ with $\eta(1+u)=\eta(u-1)$, we have $\delta_{F_{2,u}}=5$.
\end{theorem}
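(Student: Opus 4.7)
My plan is to construct a single $b \in \mathbb{F}_q$ satisfying $\delta_{F_{2,u}}(1, b) = 5$; the upper bound $\delta_{F_{2,u}} \le 5$ is already established in Corollary \ref{20240828corol1}. Since $q \equiv 3\ (\mymod 4)$ gives $\eta(-1) = -1$, the hypothesis $\eta(1+u) = \eta(u-1)$ yields $\eta(1-u) = -\eta(1+u)$, which places us in case (2) or (3) of Corollary \ref{20240828corol1} according to the sign of $\eta(u)$. I focus on case (2), where $\eta(1+u) = \eta(u-1) = \eta(u)$; the target becomes: find $b$ with $\#A_{00}(b) = \#A_{11}(b) = \#A_{10}(b) = 1$ and $\#A_{01}(b) = 2$. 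Case (3) is handled by the symmetric argument obtained by swapping the roles of $A_{01}$ and $A_{10}$.

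To count such $b$'s, I would introduce two parameters: an element $x \in C_{01}$ representing a root of $D_1F_{2,u}(\cdot) = b$ in $C_{01}$ (so that its partner $\tau_2 - x$ accounts for $\#A_{01}(b) = 2$), together with an element $y \in C_{10}$ representing the unique element of $A_{10}(b)$. Substituting $b = -2ux^2 + 2(1-u)x + (1-u)$ into equation (\ref{20240825equation11}) collapses the compatibility between $x$ and $y$ to the conic
\begin{equation*}
  \mathcal{C}: \quad y^2 + \tau_1 y + (x^2 - \tau_2 x + 1) = 0.
\end{equation*}
Its $y$-discriminant $-4x^2 + 4\tau_2 x + (\tau_1^2 - 4)$ has $x$-discriminant $\tfrac{32(1-u^2)}{u^2}$, which is non-zero for $u \notin \{0, \pm 1\}$; hence $\mathcal{C}$ is absolutely irreducible. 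The requirements that $x, \tau_2 - x \in C_{01}$, that $y \in C_{10}$, and that $\#A_{00}(b) = \#A_{11}(b) = 1$ unfold into a conjunction of character equalities $\eta(f_i(x, y)) = \epsilon_i$ for explicit low-degree polynomials $f_i$; factorizations such as $b + (1+u) = -2(ux-1)(x+1)$ and $b - (1-u) = -2x(ux - (1-u))$ keep the $f_i$ tractable and, together with the case hypothesis, collapse several of the ostensibly independent conditions.

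Writing $\mathbf{1}_{\eta(t) = \epsilon} = \tfrac{1+\epsilon\eta(t)}{2}$ and letting $k$ denote the number of surviving character conditions, twice the desired count becomes
\begin{equation*}
  2T \;=\; \sum_{(x,y)\in \mathcal{C}(\mathbb{F}_q)} \prod_{i=1}^{k} \frac{1+\epsilon_i \eta(f_i(x,y))}{2},
\end{equation*}
the factor $2$ absorbing the orderings of $\{x, \tau_2 - x\}$. Expanding yields a main term $\tfrac{1}{2^k}\#\mathcal{C}(\mathbb{F}_q) = \tfrac{q}{2^k} + O(1)$ by Theorem \ref{20240904them1} (the quadratic degree of $\mathcal{C}$ makes the $\sqrt{q}$ term in Weil's estimate vanish), alongside $2^k - 1$ cross-character sums $\sum_{(x,y)\in \mathcal{C}}\eta(g_S(x,y))$ indexed by subsets $S$. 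For each such term I would substitute $s = 2y + \tau_1$ to pass to the $(x, s)$-conic $s^2 = -4x^2 + 4\tau_2 x + \tau_1^2 - 4$, parametrize rationally, and apply Theorem \ref{20240904them3} to bound the resulting one-variable Weil sum by a constant times $\sqrt{q}$.

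The decisive obstacle will be this final verification: one must certify, subset by subset, that the one-variable polynomial appearing in each reduced character sum is not a perfect square in $\mathbb{F}_q[x]$. This is precisely what Lemmas \ref{20240904lemma2} and \ref{20240904lemma1} are designed for, and the degenerations that force squareness correspond exactly to the values $u \in \mathcal{U}$ excluded by hypothesis. Summing the bounded errors yields a total bound $C\sqrt{q}$ for some explicit constant $C$; the inequality $q/2^k > C\sqrt{q}$ then translates to the threshold $q \ge 27535^2$ stated in the theorem, and the existence of a $b$ with $\delta_{F_{2,u}}(1,b) = 5$ follows.
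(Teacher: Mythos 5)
Your proposal is correct in outline and follows essentially the same architecture as the paper's proof: reduce via Corollary \ref{20240828corol1}(2) to finding $b$ with $\# A_{00}(b)=\# A_{11}(b)=\# A_{10}(b)=1$ and $\# A_{01}(b)=2$, translate these into quadratic-character conditions, count admissible $b$ by expanding indicator functions $\tfrac{1+\epsilon\eta(\cdot)}{2}$ over an absolutely irreducible conic, and bound the resulting character sums by Weil-type estimates, with the exclusion $u\notin\mathcal{U}$ entering precisely through the non-squareness checks of Lemmas \ref{20240904lemma2} and \ref{20240904lemma1}. Your conic $y^2+\tau_1y+x^2-\tau_2x+1=0$ (parametrized by a root in $C_{01}$ and a root in $C_{10}$) is an affine change of coordinates away from the paper's $y^2+z^2=2\tau_1\tau_2$ (parametrized by square roots of the two discriminants), so the difference there is cosmetic. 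The one substantive divergence is your treatment of the mixed character sums: you propose to rationally parametrize the conic and apply Theorem \ref{20240904them3} to one-variable Weil sums, whereas the paper introduces the auxiliary quartic $\Omega(t,y)$ and invokes the stronger Theorem \ref{20240904them1}; your route is arguably cleaner and avoids the costly $5d^{13/3}$ term, at the price of tracking degree inflation and the $O(1)$ boundary points of the parametrization. Two small imprecisions to flag: your claim that \emph{all} squareness degenerations correspond to $u\in\mathcal{U}$ is not quite right --- certain subsets (the analogues of the paper's $I^{(1)},I^{(2)},I^{(3)}$) yield products that are squares on the conic for every $u$ and must be split off as additional non-negative main terms rather than error terms; and you should make explicit that $\# A_{10}(b)=1$ (exactly, not at least) follows because $\# A_{00}(b)=1$ forces the two roots of equation (\ref{20240825equation11}) to have opposite quadratic character. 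Neither point affects the viability of the argument.
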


\begin{proof}
  We only prove this theorem for the case where $\eta(1+u)=\eta(u-1)=\eta(u)$; the proof for the case where $\eta(1+u)=\eta(u-1)=-\eta(u)$ is similar. By 2) of Corollary \ref{20240828corol1}, $\delta_{F_{2,u}}=5$ if and only if $\# A_{00}(b)=\# A_{11}(b)=\# A_{10}(b)=1$ and $\# A_{01}(b)=2$.

  By (\ref{20240825equation4}), we have $\# A_{00}(b)=1$ if and only if $\eta(\frac{b\pm(1+u)}{2(1+u)})=1$, i.e., $\eta(\frac{b}{u}\pm\tau_1)=\eta(2)$ noticing that $\eta(1+u)=\eta(u)$. Similarly, we have $\# A_{11}(b)=1$ if and only if $\eta(\frac{b}{u}\pm\tau_2)=\eta(2)$. By (\ref{20240829equation1}), we have $\# A_{01}(b)=2$ if and only if
  \begin{equation*}
    \begin{cases}
      \eta(\tau_1\tau_2-2\frac{b}{u})=1,                       \\
      \eta(\tau_2\pm\sqrt{\tau_1\tau_2-2\frac{b}{u}})=\eta(2), \\
      \eta(\tau_1\pm\sqrt{\tau_1\tau_2-2\frac{b}{u}})=-\eta(2).
    \end{cases}
  \end{equation*}
  Assume that $\eta(\Delta_{10})=\eta(\tau_1\tau_2+2\frac{b}{u})=1$ and let $x_1$, $x_2$ be the two solutions of equation (\ref{20240825equation11}) in $\mathbb{F}_q$. Note that $x_1x_2=\frac{1}{2}(\tau_1-\frac{b}{u})$. If $\# A_{00}(b)=1$, then $\eta(x_1x_2)=-1$, which implies that $\eta(x_1)=-\eta(x_2)$. Hence under the assumption $\# A_{00}(b)=1$, we have $\# A_{10}(b)=1$ if
  \begin{equation*}
    \begin{cases}
      \eta(\Delta_{10})=1\iff\eta(\tau_1\tau_2+2\frac{b}{u})=1,                    \\
      \eta(-\tau_2+y)=\eta(2), \mbox{where }y\ \mbox{is the (only) square root of} \\
      \qquad\qquad\qquad\tau_1\tau_2+2\frac{b}{u}\mbox{ such that }\eta(-\tau_1+y)=-\eta(2).
    \end{cases}
  \end{equation*}
  Now we prove that if $q$ is sufficiently large, then there exists $b\in\mathbb{F}_q$ such that
  \begin{equation}\label{20240905equation7}
    \begin{cases}
      \eta(\frac{b}{u}\pm\tau_1)=\eta(2),                                   \\
      \eta(\frac{b}{u}\pm\tau_2)=\eta(2),                                   \\
      \eta(\tau_1\tau_2\pm 2\frac{b}{u})=1,                                 \\
      \eta(\tau_2\pm\sqrt{\tau_1\tau_2-2\frac{b}{u}})=\eta(2),              \\
      \eta(\tau_1\pm\sqrt{\tau_1\tau_2-2\frac{b}{u}})=-\eta(2),             \\
      \eta(-\tau_2+y)=\eta(2), \mbox{where }y\ \mbox{is the square root of} \\
      \qquad\quad\tau_1\tau_2+2\frac{b}{u}\mbox{ such that }\eta(-\tau_1+y)=-\eta(2).
    \end{cases}
  \end{equation}
  We use $N(u)$ to denote the number of all $b\in\mathbb{F}_q$ satisfying these conditions. Making the substitutions $y^2=\tau_1\tau_2+2\frac{b}{u}$ and $z^2=\tau_1\tau_2-2\frac{b}{u}$, we have $y^2+z^2=2\tau_1\tau_2$, $\frac{b}{u}=\frac{y^2-\tau_1\tau_2}{2}$ and $N(u)$ equals
  \begin{align*}
        & \frac{1}{2}\cdot\#\left\{(y,z)\in{\mathbb{F}_q^*}^2:\ \begin{cases}
                                                                  y^2+z^2=2\tau_1\tau_2,                             \\
                                                                  \eta(\frac{y^2-\tau_1\tau_2}{2}\pm\tau_1)=\eta(2), \\
                                                                  \eta(\frac{y^2-\tau_1\tau_2}{2}\pm\tau_2)=\eta(2), \\
                                                                  \eta(\tau_2\pm z)=\eta(2),                         \\
                                                                  \eta(\tau_1\pm z)=-\eta(2),                        \\
                                                                  \eta(-\tau_2+y)=\eta(2),                           \\
                                                                  \eta(-\tau_1+y)=-\eta(2)
                                                                \end{cases}\right\}\notag \\
    =\  & \frac{1}{2}\cdot\#\left\{(y,z)\in{\mathbb{F}_q^*}^2:\ \begin{cases}
                                                                  y^2+z^2=2\tau_1\tau_2,            \\
                                                                  \eta(y^2-\tau_1^2)=1,             \\
                                                                  \eta(y^2-\frac{1-3u}{u}\tau_1)=1, \\
                                                                  \eta(y^2-\tau_2^2)=1,             \\
                                                                  \eta(y^2-\frac{1+3u}{u}\tau_2)=1, \\
                                                                  \eta(\tau_2\pm z)=\eta(2),        \\
                                                                  \eta(\tau_1\pm z)=-\eta(2),       \\
                                                                  \eta(-\tau_2+y)=\eta(2),          \\
                                                                  \eta(-\tau_1+y)=-\eta(2)
                                                                \end{cases}\right\}\notag                  \\
    =\  & \frac{1}{2}\cdot\#\left\{(y,z)\in{\mathbb{F}_q^*}^2:\ \begin{cases}
                                                                  y^2+z^2=2\tau_1\tau_2,            \\
                                                                  \eta(y^2-\frac{1-3u}{u}\tau_1)=1, \\
                                                                  \eta(y^2-\frac{1+3u}{u}\tau_2)=1, \\
                                                                  \eta(\tau_2\pm z)=\eta(2),        \\
                                                                  \eta(\tau_1\pm z)=-\eta(2),       \\
                                                                  \eta(y\pm\tau_1)=-\eta(2),        \\
                                                                  \eta(y\pm\tau_2)=\eta(2)
                                                                \end{cases}\right\}.
  \end{align*}
  Put
  \begin{align*}
     & p_1(y,z)=-2(y+\tau_1),             &
     & p_2(y,z)=-2(y-\tau_1),               \\
     & p_3(y,z)=2(y+\tau_2),              &
     & p_4(y,z)=2(y-\tau_2),                \\
     & p_5(y,z)=y^2-\frac{1-3u}{u}\tau_1, &
     & p_6(y,z)=y^2-\frac{1+3u}{u}\tau_2,   \\
     & p_7(y,z)=-2(z+\tau_1),             &
     & p_8(y,z)=-2(\tau_1-z),               \\
     & p_9(y,z)=2(z+\tau_2),              &
     & p_{10}(y,z)=2(\tau_2-z).
  \end{align*}
  Note that since $u\ne 0$, none of $\pm\tau_1$ or $\pm\tau_2$ is a root of $p_5$ or $p_6$ (viewed as polynomials of $y$). Moreover, since $u\not\in\{\pm\frac{1}{3}\}$ when $p\ne 3$, neither $p_5$ nor $p_6$ is the square of some polynomial. Then
  $N(u)$ equals
  \begin{align*}
        & \frac{1}{2}\cdot\#\left\{(y,z)\in{\mathbb{F}_q^*}^2:\ \begin{cases}y^2+z^2=2\tau_1\tau_2,\\ \eta(p_i(y,z))=1\ \mbox{for any}\ i\in[10]\end{cases}\right\}                                                                       \\
    =\  & \frac{1}{2^{11}}\cdot\Bigg(\sum\limits_{\substack{y,z\in\mathbb{F}_q                                                                                                                                                            \\y^2+z^2=2\tau_1\tau_2}}\prod\limits_{i=1}^{10}\Big(1+\eta\big(p_i(y,z)\big)\Big)\\
        & \qquad\qquad\quad\quad -\sum\limits_{\substack{(y,z)\in A                                                                                   }}\prod\limits_{i=1}^{10}\Big(1+\eta\big(p_i(y,z)\big)\Big)\Bigg)                   \\
    =\  & \frac{1}{2^{11}}\Bigg(\sum\limits_{I}S_I-\sum\limits_{\substack{(y,z)\in A                                                                                   }}\prod\limits_{i=1}^{10}\Big(1+\eta\big(p_i(y,z)\big)\Big)\Bigg),
  \end{align*}
  where $A=\big\{(y,z)\in\mathbb{F}_q^2:\ y^2+z^2=2\tau_1\tau_2,\ \mbox{and\ }yz=0\ \mbox{or}\ p_i(y,z)=0\ \mbox{for some}\ i\in[10]\big\}$, $I$ runs over all subsets of $[10]$, and
  $$S_I=\sum\limits_{\substack{y,z\in\mathbb{F}_q\\y^2+z^2=2\tau_1\tau_2}}\eta\Big(\displaystyle\prod_{i\in I}p_i(y,z)\Big).$$
  It is not difficult to see that
  \begin{align*}
    A\subset & \Bigg\{(0,\pm\sqrt{2\tau_1\tau_2}),\ (\pm\tau_1,\pm\sqrt{\frac{1-3u}{u}\tau_1}),                      \\
             & \ \ \ (\pm\sqrt{2\tau_1\tau_2},0),\ (\pm\tau_2,\pm\sqrt{\frac{1+3u}{u}\tau_2}),                       \\
             & \ \ \ (\pm\sqrt{\frac{1-3u}{u}\tau_1},\pm\tau_1),\ (\pm\sqrt{\frac{1+3u}{u}\tau_2},\pm\tau_2)\Bigg\},
  \end{align*}
  which implies that $\# A\le 20$. It follows that
  \begin{equation}\label{20240905equation6}
    N(u)\ge\frac{1}{2^{11}}(\sum\limits_{I}S_I-5\cdot 2^{12})
  \end{equation}
  To prove that $N(u)>0$, it remains to estimate $S_I$ for any $I\subset[10]$.
  If $I=\emptyset$, then by Lemma \ref{20240903lemma1}, we have
  \begin{equation}\label{20240904equation2}
    S_{I}=\#\left\{(y,z)\in\mathbb{F}_q^2:\ y^2+z^2=2\tau_1\tau_2\right\}=q+1.
  \end{equation}
  If $I=I^{(1)}:=\{5,7,8\}$, then
  \begin{align*}
    S_I & =\sum\limits_{\substack{y,z\in\mathbb{F}_q                                \\y^2+z^2=2\tau_1\tau_2}}\eta\Big((y^2-\frac{1-3u}{u}\tau_1)(\tau_1^2-z^2)\Big)\\
        & =\sum\limits_{\substack{y,z\in\mathbb{F}_q                                \\y^2+z^2=2\tau_1\tau_2}}\eta\Big((y^2-\frac{1-3u}{u}\tau_1)(\tau_1^2-2\tau_1\tau_2+y^2)\Big)\\
        & =\sum\limits_{\substack{y,z\in\mathbb{F}_q                                \\y^2+z^2=2\tau_1\tau_2}}\eta\Big((y^2-\frac{1-3u}{u}\tau_1)(y^2-\frac{1-3u}{u}\tau_1)\Big)\\
        & \ge\#\left\{(y,z)\in\mathbb{F}_q^2:\ y^2+z^2=2\tau_1\tau_2\right\}-4=q-3.
  \end{align*}
  Similarly, if $I=I^{(2)}:=\{6,9,10\}$, then $S_I\ge q-3$; if $I=I^{(3)}:=\{5,6,7,8,9,10\}$, then $S_I\ge q-7$.

  Now assume that $\# I\ge 1$ and $I\ne I^{(i)}$ for any $1\le i\le 3$. We can divide $I$ into two parts: $I=I_1\cup I_2$, where $I_1\subset [6]$ and $I_2\subset\{7,8,9,10\}$.
  \begin{enumerate}
    \item If $\# I_2=0$, then $\prod_{i\in I}p_i(y,z)$ is a polynomial of $y$.
    \item If $\# I_2=1$, then $\prod_{i\in I}p_i(y,z)$ has the form $\phi(y)(z+a)$, where $\phi\in\mathbb{F}_q[x]$ and $a\in\{\pm\tau_1,\pm\tau_2\}$.
    \item If $\# I_2=3$, then using the relation $z^2=2\tau_1\tau_2-y^2$, $\prod_{i\in I}p_i(y,z)$ can be transformed into the form $\phi(y)(z+a)$, where $\phi\in\mathbb{F}_q[x]$ and $a\in\{\pm\tau_1,\pm\tau_2\}$.
    \item If $\# I_2=4$, then using the relation $z^2=2\tau_1\tau_2-y^2$, $\prod_{i\in I}p_i(y,z)$ can be transformed into a polynomial of $y$.
    \item Now assume that $\# I_2=2$.
          \begin{enumerate}
            \item If $I_2=\{7,8\}$ or $\{9,10\}$, since $(z+\tau_1)(z-\tau_1)=z^2-\tau_1^2=2\tau_1\tau_2-y^2-\tau_1^2=-(y^2-\frac{1+3u}{u}\tau_1)$ and $(z+\tau_2)(z-\tau_2)=z^2-\tau_2^2=2\tau_1\tau_2-y^2-\tau_2^2=-(y^2-\frac{1-3u}{3}\tau_2)$, $\prod_{i\in I}p_i(y,z)$ can be transformed into a polynomial of $y$.
            \item If $I_2\ne\{7,8\}$ and $\{9,10\}$, then we claim that $\prod_{i\in I}p_i(y,z)$ can be transformed into the form $\phi(y)(z+dy^2+a)$, where $\phi\in\mathbb{F}_q[x]$ and $d,a\in\mathbb{F}_q^*$ satisfy that $4ad+8d^2\tau_1\tau_2+1\ne 0$. We take $I_2=\{7,9\}$ as an example. Indeed, if $I_2=\{7,9\}$, since $(z+\tau_1)(z+\tau_2)=z^2+(\tau_1+\tau_2)z+\tau_1\tau_2=(\tau_1+\tau_2)z+3\tau_1\tau_2-y^2$, $\prod_{i\in I}p_i(y,z)$ can be transformed into the form $\phi(y)(z+dy^2+a)$, where $d=-\frac{1}{\tau_1+\tau_2}$ and $a=\frac{3\tau_1\tau_2}{\tau_1+\tau_2}$. It is easy to verify that
                  $$4ad+8d^2\tau_1\tau_2+1=\big(\frac{\tau_1-\tau_2}{\tau_1+\tau_2}\big)^2\ne0.$$
          \end{enumerate}
  \end{enumerate}
  In summary, under the condition that $y^2+z^2=2\tau_1\tau_2$, $\prod_{i\in I}p_i(y,z)$ can be transformed into one of the following forms:
  \begin{enumerate}[label=\Roman*)]
    \item $\gamma(y)$, where $\gamma\in\mathbb{F}_q[x]$;
    \item $\phi(y)(z+a)$, where $\phi\in\mathbb{F}_q[x]$ and $a\in\{\pm\tau_1,\pm\tau_2\}$;
    \item $\phi(y)(z+dy^2+a)$, where $\phi\in\mathbb{F}_q[x]$, $d,a\in\mathbb{F}_q^*$ satisfy that $4ad+8d^2\tau_1\tau_2+1\ne 0$.
  \end{enumerate}
  The last two cases are collectively denoted as $\phi(y)\big(z+\rho(y)\big)$. Note that
  \begin{align}\label{20240904equation1}
          & \sum\limits_{\substack{y,z\in\mathbb{F}_q                                                                                                                              \\y^2+z^2=2\tau_1\tau_2}}\Bigg(1+\eta\Big(\phi(y)\big(z+\rho(y)\big)\Big)\Bigg)\notag\\
    =\    & \#\left\{(y,z,t)\in\mathbb{F}_q^3:\ \begin{cases}y^2+z^2=2\tau_1\tau_2,\\ \phi(y)\big(z+\rho(y)\big)=t^2\end{cases}\right\}\notag                                      \\
    =\    & \#\left\{(y,z)\in\mathbb{F}_q^2:\ y^2+z^2=2\tau_1\tau_2,\ \phi(y)=0\right\}\notag                                                                                      \\
          & +\#\left\{(y,z,t)\in\mathbb{F}_q^3:\ \begin{cases}y^2+z^2=2\tau_1\tau_2,         \\
                                                   z=\frac{t^2}{\phi(y)}-\rho(y), \\ \phi(y)\ne 0\end{cases}\right\}\notag                                                           \\
    \ge\  & \#\left\{(y,z,t)\in\mathbb{F}_q^3:\ \begin{cases}y^2+z^2=2\tau_1\tau_2,\\ z=\frac{t^2}{\phi(y)}-\rho(y),\\ \phi(y)\ne 0\end{cases}\right\}\notag                       \\
    =\    & \#\left\{(y,t)\in\mathbb{F}_q^2:\ \begin{cases}y^2+\big(\frac{t^2}{\phi(y)}-\rho(y)\big)^2=2\tau_1\tau_2,\\ \phi(y)\ne 0\end{cases}\right\} \notag \\
    \ge\  & \# A_{\Omega}(\mathbb{F}_q)-\deg(\phi),
  \end{align}
  where
  $$\Omega(t,y)=t^4-2\phi(y)\rho(y)t^2+\phi(y)^2\big(\rho(y)^2+y^2-2\tau_1\tau_2\big).$$
  We claim that $\Omega(t,y)$ is absolutely irreducible. Firstly, we have
  \begin{align*}
        & \big(2\phi(y)\rho(y)\big)^2-4\phi(y)^2\big(\rho(y)^2+y^2-2\tau_1\tau_2\big) \\
    =\  & -4\phi(y)^2(y^2-2\tau_1\tau_2),
  \end{align*}
  which is not a square element in $\overline{\mathbb{F}_q}[y]$ since $2\tau_1\tau_2\ne 0$. If $\rho(y)=a$ with $a\in\{\pm\tau_1,\pm\tau_2\}$, then $\rho(y)^2-2\tau_1\tau_2\ne 0$ since $u\not\in\{\pm\frac{1}{3}\}$ when $p\ne 3$, which implies that $\rho(y)^2+y^2-2\tau_1\tau_2$ is not a square element in $\overline{\mathbb{F}_q}[y]$. If $\rho(y)=dy^2+a$ with $d,a\in\mathbb{F}_q^*$ and $4ad+8d^2\tau_1\tau_2+1\ne 0$, then $\rho(y)^2+y^2-2\tau_1\tau_2$ is also not a square element in $\overline{\mathbb{F}_q}[y]$. Indeed, in this case, we have
  $$\rho(y)^2+y^2-2\tau_1\tau_2=d^2(y^4+\frac{2da+1}{d^2}y^2+\frac{a^2-2\tau_1\tau_2}{d^2}).$$
  By Lemma \ref{20240904lemma1}, if it is a square element in $\overline{\mathbb{F}_q}[y]$, then
  $$(\frac{2da+1}{d^2})^2-4\frac{a^2-2\tau_1\tau_2}{d^2}=0\iff 4da+8d^2\tau_1\tau_2+1=0,$$
  which is a contradiction. Hence $\rho(y)^2+y^2-2\tau_1\tau_2$ is also not a square element in $\overline{\mathbb{F}_q}[y]$. By Lemma \ref{20240904lemma2}, $\Omega(t,y)$ is absolutely irreducible. By Theorem \ref{20240904them1}, we have
  \begin{align*}
    \left|\# A_{\Omega}(\mathbb{F}_q)-q\right|
     & \le\big(\deg(\Omega)-1\big)\big(\deg(\Omega)-2\big)\sqrt{q} \\
     & \qquad\qquad+5\deg(\Omega)^{\frac{13}{3}}.
  \end{align*}
  Then by (\ref{20240904equation1}), we have
  \begin{align}\label{20240904equation3}
    S_I & =\sum\limits_{\substack{y,z\in\mathbb{F}_q                                          \\y^2+z^2=2\tau_1\tau_2}}\eta\Big(\phi(y)\big(z+\rho(y)\big)\Big)\notag\\
        & \ge\# A_{\Omega}(\mathbb{F}_q)-\deg(\phi)-\sum\limits_{\substack{y,z\in\mathbb{F}_q \\y^2+z^2=2\tau_1\tau_2}}1\\
        & \ge-\big(\deg(\Omega)-1\big)\big(\deg(\Omega)-2\big)\sqrt{q} \notag                 \\
        & \qquad\qquad\qquad -5\deg(\Omega)^{\frac{13}{3}}-\deg(\phi)-1.\notag
  \end{align}
  By definition, it is easy to see that
  $$\deg(\Omega)=\begin{cases}
      \max\{4,2+2\deg(\phi)\} & \mbox{if }\deg(\rho)=0, \\
      4+2\deg(\phi)           & \mbox{if }\deg(\rho)=2.
    \end{cases}$$
  Finally, we address the case I), i.e., $\prod_{i\in I}p_i(y,z)=\gamma(y)$ with $\gamma\in\mathbb{F}_q[x]$. Note that
  \begin{align*}
    S_I & =\sum\limits_{\substack{y,z\in\mathbb{F}_q                                                                                       \\y^2+z^2=2\tau_1\tau_2}}\eta\Big(\gamma(y)\Big)\\
        & =\sum\limits_{y\in\mathbb{F}_q}\eta\big(\gamma(y)\big)\Big(1+\eta(2\tau_1\tau_2-y^2)\Big)                                        \\
        & =\sum\limits_{y\in\mathbb{F}_q}\eta\big(\gamma(y)\big)+\sum\limits_{y\in\mathbb{F}_q}\eta\big(\gamma(y)(2\tau_1\tau_2-y^2)\big).
  \end{align*}
  It is easy to see that since $u\not\in\{\pm\frac{1}{3}\}$ when $p\ne 3$, none of $\pm\tau_1$, $\pm\tau_2$ is a root of $2\tau_1\tau_2-y^2$. Then from the above discussion, we know that neither $\gamma(y)$ nor $\gamma(y)(2\tau_1\tau_2-y^2)$ is a square element in $\overline{\mathbb{F}_q}[y]$ (note that $I\ne I^{(i)}$ for any $1\le i\le 3$). By Theorem \ref{20240904them3}, we have
  \begin{align}\label{20240904equation5}
    S_I & \ge\big(1-{\rm{d}}(\gamma)\big)\sqrt{q}+\Big(1-\big(2+{\rm{d}}(\gamma)\big)\Big)\sqrt{q}\notag \\
        & =-2{\rm{d}}(\gamma)\sqrt{q},
  \end{align}
  where ${\rm{d}}(\gamma)$ is the number of distinct roots of $\gamma$ in the its splitting field over $\mathbb{F}_q$. In summary, for the case II) and the case III), we can use (\ref{20240904equation3}) to estimate $S_I$; for the case I), we can use (\ref{20240904equation5}) to estimate $S_I$. Eventually, we will obtain a lower bound for $\sum_IS_I$ of the form $(4q-12)+m_1\sqrt{q}+m_2$, where $m_1\in\mathbb{Z}$ and $m_2\in\mathbb{R}$. Due to the large number of terms and the complexity of the explanation, we use a Python program to calculate the values of $m_1$ and $m_2$, which can be found in the appendix. It turns out that we can take $m_1=-98312$ and $m_2=-325643353$. By (\ref{20240905equation6}), we have
  \begin{align*}
    N(u) & \ge\frac{1}{2^{11}}(\sum\limits_{I}S_I-5\cdot 2^{12}) \\
         & \ge\frac{1}{2^{11}}(4q-98312\sqrt{q}-325663845).
  \end{align*}
  It is easy to see that if $q\ge 27535^2$, then $N(u)>0$.
\end{proof}

\begin{remark}
  Since the number $27535^2$ is very large, it is impossible to exhaustively check all cases for $q<27535^2$ in a short time. Numerical results suggest that the theorem is true when $q\ge 4027$.
\end{remark}

\begin{theorem}
  If $q\ge 27535^2$, then for any $u\in\mathbb{F}_q\setminus\mathcal{U}$ with $\eta(1+u)=\eta(1-u)$, we have $\delta_{F_{2,u}}=4$.
\end{theorem}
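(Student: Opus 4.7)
The first step is to observe that part 1) of Corollary \ref{20240828corol1} already gives $\delta_{F_{2,u}}\le 4$, so it suffices to exhibit a single $b\in\mathbb{F}_q$ with $\delta_{F_{2,u}}(1,b)=4$. The plan is to target the configuration
$$\# A_{01}(b)=\# A_{10}(b)=2.$$
In either subcase of the hypothesis $\eta(1+u)=\eta(1-u)$ (namely $\eta(1+u)=\eta(1-u)=\eta(u)$ or $\eta(1+u)=\eta(1-u)=-\eta(u)$), Lemmas \ref{20240825lemma9}, \ref{20240825lemma8}, \ref{20240825lemma11} and \ref{20240828lemma2} together force $\# A_{00}(b)=\# A_{11}(b)=0$ whenever this configuration holds, so such a $b$ automatically yields $\delta_{F_{2,u}}(1,b)=4$.

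Mimicking the proof of Theorem \ref{20240905them2}, I would translate the conditions $\# A_{01}(b)=2$ and $\# A_{10}(b)=2$ (given by (\ref{20240829equation1}) and (\ref{20240905equation10})) into a system in auxiliary variables $y,z$ satisfying $y^2=\tau_1\tau_2+2b/u$ and $z^2=\tau_1\tau_2-2b/u$ (so that $y^2+z^2=2\tau_1\tau_2$), together with eight conditions of the form $\eta(p_i(y,z))=1$, where the $p_i$'s are suitably-signed versions of the eight linear polynomials $\tau_1\pm y$, $\tau_2\pm y$, $\tau_1\pm z$, $\tau_2\pm z$. Expanding $\prod_i\bigl(1+\eta(p_i)\bigr)$ and accounting for the $4$-to-$1$ correspondence $(\pm y,\pm z)\mapsto b$, the number $N(u)$ of admissible $b$ takes the form
$$N(u)=\frac{1}{2^{10}}\sum_{I\subseteq[8]}S_I+O(1),\qquad S_I=\sum_{\substack{y,z\in\mathbb{F}_q\\y^2+z^2=2\tau_1\tau_2}}\eta\!\left(\prod_{i\in I}p_i(y,z)\right),$$
where the $O(1)$ term collects the boundary contributions from $(y,z)$ making some $p_i$ vanish.

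The estimation of $S_I$ then follows the same template as in Theorem \ref{20240905them2}. The leading term $S_\emptyset=q+1$ is furnished by Lemma \ref{20240903lemma1}. For nonempty $I$, the relation $z^2=2\tau_1\tau_2-y^2$ reduces $\prod_{i\in I}p_i$ either to a univariate polynomial $\gamma(y)$, or to a polynomial of the shape $\phi(y)\bigl(z+\rho(y)\bigr)$ with constant $\rho$: the former is bounded by $|S_I|=O(\sqrt{q})$ via Theorem \ref{20240904them3}, and the latter follows the route of (\ref{20240904equation1})--(\ref{20240904equation3}) using Lemmas \ref{20240904lemma2} and \ref{20240904lemma1} together with Theorem \ref{20240904them1}. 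A handful of exceptional subsets $I$ arise from pairings such as $(z+\tau_1)(z-\tau_1)=-\bigl(y^2-\tfrac{1+3u}{u}\tau_1\bigr)$, for which the reduced polynomial becomes a perfect square and contributes an additional main term of size $q+O(1)$; these are the direct analogues of the subsets $I^{(1)},I^{(2)},I^{(3)}$ identified in Theorem \ref{20240905them2}.

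The main obstacle is the systematic bookkeeping across the $2^8=256$ subsets $I$: classifying them according to the shape of the reduced polynomial, verifying in each nontrivial case the absolute-irreducibility hypothesis of Lemma \ref{20240904lemma2} (where the exclusion $u\notin\mathcal{U}$ is essential to guarantee $\tau_1\ne\pm\tau_2$ and to rule out accidental square factorizations), and tallying the contributions. Summing everything produces a bound of the form $N(u)\ge 2^{-10}\bigl(cq-m_1\sqrt{q}-m_2\bigr)$ with explicit positive constants $c,m_1,m_2$, which is strictly positive whenever $q\ge 27535^2$. Since the present setup has fewer conditions and fewer exceptional subsets than Theorem \ref{20240905them2}, the same threshold is more than enough, and in fact a sharper accounting (in the spirit of the computer-assisted estimate used there) would likely reduce it considerably.
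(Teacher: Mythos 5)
Your reduction is sound: part 1) of Corollary \ref{20240828corol1} gives $\delta_{F_{2,u}}\le 4$, and in either subcase of $\eta(1+u)=\eta(1-u)$ the four lemmas do force $\# A_{00}(b)=\# A_{11}(b)=0$ whenever $\# A_{01}(b)=\# A_{10}(b)=2$, so exhibiting one such $b$ suffices. But your route is genuinely different from the paper's, and the paper's is much shorter: it targets the configuration $\# A_{00}(b)=\# A_{10}(b)=1$, $\# A_{01}(b)=2$ (after reducing to the subcase $\eta(1+u)=\eta(1-u)=\eta(u)$ via $u\mapsto -u$ and Lemma \ref{20240825lemma10}), observes that this is guaranteed by the conditions (\ref{20240905equation8}), which are a sub-collection of (\ref{20240905equation7}), and notes that the positivity of $N(u)$ established in Theorem \ref{20240905them2} never used $\eta(u-1)=\eta(u)$ after (\ref{20240905equation7}) was set up. Hence the very same $b$ produced there works here, and the threshold $27535^2$ is inherited for free, with no new character-sum estimate at all.

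The concrete gap in your version is the claim that exceptional subsets analogous to $I^{(1)},I^{(2)},I^{(3)}$ contribute additional main terms of size $q$. In your system all eight $p_i$ are \emph{linear} (there are no quadratic factors like the $p_5,p_6$ of Theorem \ref{20240905them2} to pair with), and the reductions $(\tau_1+z)(\tau_1-z)=y^2-\tau_1(2\tau_2-\tau_1)$, $(\tau_2+z)(\tau_2-z)=y^2-\tau_2(2\tau_1-\tau_2)$ produce quadratics in $y$ whose roots are distinct from $\pm\tau_1$, $\pm\tau_2$ and from each other precisely because $u\notin\mathcal{U}$ (equality of any two of these constants forces $\tau_1=\pm\tau_2$ or $u=\pm\frac13$). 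So no nonempty product reduces to a constant times a perfect square, the only main term is $S_\emptyset=q+1$, and your lower bound has the shape $N(u)\ge 2^{-10}\bigl(q-m_1\sqrt{q}-m_2\bigr)$ with leading coefficient $1$ rather than the $4q$ of Theorem \ref{20240905them2}. This still proves the statement for all sufficiently large $q$, but it undercuts your final assertion that ``the same threshold is more than enough'': with a main term four times smaller and error constants of comparable size, the threshold obtained from your tally could exceed $27535^2$, and nothing in your argument rules this out. To salvage the explicit bound you would have to carry out the bookkeeping over the $2^8$ subsets and verify the resulting constants, or else switch to the paper's shortcut.
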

\begin{proof}
  If $\eta(1+u)=\eta(1-u)=-\eta(u)$, then $u'=-u$ satisfies that $\eta(1+u')=\eta(1-u')=\eta(u')$. Since by Lemma \ref{20240825lemma10}, $F_{2,u}$ and $F_{2,u'}$ have the same differential spectrum, we only need to prove this theorem for the case where $\eta(1+u)=\eta(1-u)=\eta(u)$. By 1) of Corollary \ref{20240828corol1}, it suffices to show that there exists $b\in\mathbb{F}_q$ such that $\delta_{F_{2,u}}(1,b)=4$. Now we prove that if $q$ is sufficiently large, then there exists $b\in\mathbb{F}_q$ such that
  $\# A_{01}(b)=2$ and $\# A_{00}(b)=\# A_{10}(b)=1$.

  From the proof of Theorem \ref{20240905them2}, we can see that it suffices to prove that if $q$ is sufficiently large, then there exists $b\in\mathbb{F}_q$ such that
  \begin{equation}\label{20240905equation8}
    \begin{cases}
      \eta(\frac{b}{u}\pm\tau_1)=\eta(2),                                \\
      \eta(\tau_1\tau_2\pm 2\frac{b}{u})=1,                              \\
      \eta(\tau_2\pm\sqrt{\tau_1\tau_2-2\frac{b}{u}})=\eta(2),           \\
      \eta(\tau_1\pm\sqrt{\tau_1\tau_2-2\frac{b}{u}})=-\eta(2),          \\
      \eta(-\tau_2+y)=\eta(2), \mbox{where }y\ \mbox{is the square root} \\
      \quad\mbox{of }\tau_1\tau_2+2\frac{b}{u}\mbox{ such that }\eta(-\tau_1+y)=-\eta(2).
    \end{cases}
  \end{equation}
  These conditions are obtained by removing the conditions $\eta(\frac{b}{u}\pm\tau_2)=\eta(2)$ from the conditions (\ref{20240905equation7}). Note that in the proof of Theorem \ref{20240905them2}, after obtaining the conditions (\ref{20240905equation7}), we no longer use the condition $\eta(u-1)=\eta(u)$. Hence by Theorem \ref{20240905them2}, if $q\ge 27535^2$, then there exists $b\in\mathbb{F}_q$ satisfying the conditions (\ref{20240905equation8}).
\end{proof}

\begin{remark}
  Here, for brevity, we directly use the bound $27535^2$ obtained in Theorem \ref{20240905them2}. However, readers can perform similar estimations as in the proof of Theorem \ref{20240905them2} to obtain a smaller bound. This will reduce the computational resources required for brute-force verification. Numerical results suggest that the theorem is true when $q\ge 839$.
\end{remark}

\section{The Differential Uniformity of $F_{2,\pm\frac{1}{3}}$ when $p\ne 3$}\label{20240907section4}

In this section, we determine the differential uniformity of $F_{2,\pm\frac{1}{3}}$ when $p\ne 3$. By Lemma \ref{20240825lemma10},  $F_{2,\frac{1}{3}}$ and $F_{2,-\frac{1}{3}}$ have the same differential spectrum. Hence we only need to study $F_{2,\frac{1}{3}}$. We have $\tau_1=4$, $\tau_2=2$, $D_1F_{2,\frac{1}{3}}(0)=\frac{4}{3}$ and $D_1F_{2,\frac{1}{3}}(-1)=-\frac{2}{3}$.

By (\ref{20240825equation4}), we have
\begin{equation}\label{20240825equation12}
  \# A_{00}(b)=\begin{cases}1&\mbox{if}\ 2(3b\pm 4)\in C_{0},\\0&\mbox{otherwise}.\end{cases}
\end{equation}
By (\ref{20240825equation5}), we have
\begin{equation}\label{20240905equation9}
  \# A_{11}(b)=\begin{cases}1&\mbox{if}\ 3b\pm 2\in C_{1},\\0&\mbox{otherwise}.\end{cases}
\end{equation}
The equation (\ref{20240825equation6}) becomes
\begin{equation}\label{20240905equation11}
  x^2-2x+\frac{3b-2}{2}=0,
\end{equation}
with $\Delta_{01}=2(4-3b)$, $x_1x_2=\frac{3b-2}{2}$ and $(x_1+1)(x_2+1)=\frac{3b+4}{2}$. The equation (\ref{20240825equation11}) becomes
\begin{equation}\label{20240905equation12}
  x^2+4x+\frac{4-3b}{2}=0,
\end{equation}with $\Delta_{10}=2(3b+4)$, $x_1x_2=\frac{4-3b}{2}$ and $(x_1+1)(x_2+1)=-\frac{3b+2}{2}$.

Note that $\eta(1-\frac{1}{3})=\eta(1+\frac{1}{3})$ if and only if $\eta(2)=1$, which is furthermore equivalent to saying that $q\equiv 7\ (\mymod\ 8)$.

\begin{lemma}\label{20240824lemma1}
  For any $b\in\mathbb{F}_q$, $\# A_{00}(b)$ and $\# A_{01}(b)$ cannot both be non-zero.
\end{lemma}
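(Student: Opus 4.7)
The plan is to show that the conditions on $b$ making $\# A_{00}(b)\ge 1$ and those making $\# A_{01}(b)\ge 1$ are incompatible, by tracking the single quantity $\eta(3b-4)$.

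The key observation is that if $\# A_{01}(b)\ge 1$, then the quadratic (\ref{20240905equation11}) must admit a root in $\mathbb{F}_q$, so its discriminant $\Delta_{01}=2(4-3b)$ is either $0$ or a nonzero square in $\mathbb{F}_q$. I would split according to which of these holds.

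In the degenerate case $\Delta_{01}=0$ we have $b=4/3$, so $(3b-4)/8=0\notin C_0$; formula (\ref{20240825equation12}) then gives $\# A_{00}(4/3)=0$ immediately, confirming the lemma in this case. In the remaining case $\Delta_{01}\ne 0$ we have $\eta(2(4-3b))=1$, equivalently $\eta(4-3b)=\eta(2)$. Because $q\equiv 3\ (\mymod 4)$, $\eta(-1)=-1$, and multiplying the previous equality by $\eta(-1)$ yields $\eta(3b-4)=-\eta(2)$. On the other hand, the assumption $\# A_{00}(b)\ge 1$ combined with (\ref{20240825equation12}) forces $\eta(2(3b-4))=1$, i.e.\ $\eta(3b-4)=\eta(2)$. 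These two determinations of $\eta(3b-4)$ clash, so $\# A_{00}(b)=0$ in this case too.

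I do not anticipate a real obstacle: the whole proof reduces to one discriminant computation and the sign flip $\eta(-1)=-1$. The only small point requiring care is separating out the degenerate subcase $\Delta_{01}=0$ before invoking the equivalence ``root in $\mathbb{F}_q$'' $\iff$ ``discriminant is a nonzero square,'' which is handled by checking the single value $b=4/3$.
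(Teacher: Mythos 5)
Your proof is correct and is essentially the paper's own argument: the paper likewise observes that $\# A_{01}(b)>0$ forces $\Delta_{01}=2(4-3b)$ to be a square (possibly zero), hence $\eta\big(2(3b-4)\big)\in\{-1,0\}$ because $\eta(-1)=-1$, which kills $\# A_{00}(b)$ by (\ref{20240825equation12}). Your only addition is spelling out the degenerate subcase $b=4/3$ separately, which the paper absorbs into the single statement that $\eta\big(2(3b-4)\big)$ is $-1$ or $0$.
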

\begin{proof}
  If $\# A_{01}(b)>0$, then $\Delta_{01}=2(4-3b)$ is a square element in $\mathbb{F}_{q}$, which implies that $\eta\big(2(3b-4)\big)=-1$ or $0$, which implies that $\# A_{00}(b)=0$ by (\ref{20240825equation12}).
\end{proof}

\begin{corollary}\label{20240925corol1}
  We have $\delta_{F_{2,\frac{1}{3}}}\le 4$.
\end{corollary}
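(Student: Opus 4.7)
The plan is to reduce the claim to the already-established framework of Corollary \ref{20240828corol1} via a case split on $q\pmod 8$, which, under the standing assumption $q\equiv 3\pmod 4$, is equivalent to determining the value of $\eta(2)$.

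First I would evaluate the four quadratic characters that decide which sub-case of Corollary \ref{20240828corol1} applies when $u=\tfrac{1}{3}$. Using $1+u=\tfrac{4}{3}$, $1-u=\tfrac{2}{3}$, $u-1=-\tfrac{2}{3}$, $u=\tfrac{1}{3}$, and $\eta(-1)=-1$, one obtains
\[
\eta(1+u)=\eta(3),\ \ \eta(1-u)=\eta(2)\eta(3),\ \ \eta(u-1)=-\eta(2)\eta(3),\ \ \eta(u)=\eta(3).
\]

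If $q\equiv 7\pmod 8$ then $\eta(2)=1$, so $\eta(1+u)=\eta(1-u)=\eta(3)$, and item~1 of Corollary \ref{20240828corol1} immediately gives $\delta_{F_{2,1/3}}\leq 4$. If instead $q\equiv 3\pmod 8$ then $\eta(2)=-1$, which yields $\eta(1+u)=\eta(u-1)=\eta(u)=\eta(3)$, placing us in item~2 of that corollary. Item~2 states that $\delta_{F_{2,1/3}}(1,b)=5$ would require simultaneously $\#A_{00}(b)=1$ and $\#A_{01}(b)=2$. But Lemma \ref{20240824lemma1} (just proved) forbids $\#A_{00}(b)$ and $\#A_{01}(b)$ from both being positive for a single $b$, so this 5-uniformity condition can never be realized; hence $\delta_{F_{2,1/3}}(1,b)\leq 4$ for every $b$, and by Lemma \ref{20240825lemma7} the same bound passes to $\delta_{F_{2,1/3}}$.

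The argument is essentially bookkeeping once Lemma \ref{20240824lemma1} and Corollary \ref{20240828corol1} are in hand, so there is no genuine obstacle. The only minor verification required is the identification of $\eta(2)$ with $q\pmod 8$ in the prime-power setting: since $q\equiv 3\pmod 4$ forces $p\equiv 3\pmod 4$ and the extension degree $n$ to be odd, one has $\eta_{\mathbb{F}_q}(2)=\eta_{\mathbb{F}_p}(2)$, and the supplementary law for $2$ yields $\eta(2)=1$ iff $p\equiv 7\pmod 8$ iff $q\equiv 7\pmod 8$, which justifies the dichotomy used above.
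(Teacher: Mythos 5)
Your proposal is correct and follows essentially the same route as the paper: a case split on $\eta(2)$, invoking item~1 of Corollary \ref{20240828corol1} when $\eta(1+u)=\eta(1-u)$, and combining item~2 with Lemma \ref{20240824lemma1} (which rules out $\# A_{00}(b)$ and $\# A_{01}(b)$ both being positive) in the other case. The extra verification that $\eta(2)$ is determined by $q\bmod 8$ is correct but not needed for the argument itself.
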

\begin{proof}
  If $\eta(2)=1$, then $\eta(1+\frac{1}{3})=\eta(1-\frac{1}{3})$. By 1) of Corollary \ref{20240828corol1}, we have $\delta_{F_{2,\frac{1}{3}}}\le 4$. If $\eta(2)=-1$, then $\eta(1+\frac{1}{3})=\eta(\frac{1}{3}-1)=\eta(\frac{1}{3})$. By Lemma \ref{20240824lemma1} and 2) of Corollary \ref{20240828corol1}, we have $\delta_{F_{2,\frac{1}{3}}}\le 4$.
\end{proof}

We first consider the case where $q\equiv 7\ (\mymod\ 8)$.

\begin{lemma}\label{20240825lemma12}
  If $q\equiv 7\ (\mymod\ 8)$, then for any $b\in\mathbb{F}_q$, $\#A_{01}(b)$ and $\#A_{10}(b)$ cannot both be  non-zero.
\end{lemma}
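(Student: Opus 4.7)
The plan is to assume for contradiction that some $b\in\mathbb{F}_q$ satisfies $\#A_{01}(b)\ge 1$ and $\#A_{10}(b)\ge 1$, and then to extract a contradiction by applying Lemma \ref{20240824lemma3}. First I would exploit the quadratic equations (\ref{20240905equation11}) and (\ref{20240905equation12}): the two roots of (\ref{20240905equation11}) have the form $x=1\pm s$ with $s^2=(4-3b)/2$, while those of (\ref{20240905equation12}) have the form $y=-2\pm t$ with $t^2=(3b+4)/2$. Hence both $s$ and $t$ must lie in $\mathbb{F}_q$, and, crucially, $s^2+t^2=4$. A brief inspection disposes of the degenerate cases $s=0$ or $t=0$: for instance, $s=0$ forces the unique root $x=1$, where $\eta(x+1)=\eta(2)=1\ne -1$ since $q\equiv 7\ (\mymod 8)$, so $\#A_{01}(b)=0$; the case $t=0$ is symmetric.

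Next I would show that the hypothesis $\#A_{01}(b)\ge 1$ forces $\eta(2+s)=\eta(2-s)=-1$. Indeed, whichever of $x=1+s$ or $x=1-s$ lies in $C_{01}$, the condition $\eta(x+1)=-1$ forces one of $\eta(2\pm s)$ to equal $-1$; and since $(2+s)(2-s)=4-s^2=t^2\in C_0$, the two values $\eta(2+s)$ and $\eta(2-s)$ must share the same sign, so both equal $-1$. An entirely analogous argument, using $(2+t)(2-t)=4-t^2=s^2\in C_0$ together with the requirement $\eta(y)=-1$ for $y=-2\pm t$, yields $\eta(2+t)=\eta(2-t)=1$.

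The concluding step is to invoke Lemma \ref{20240824lemma3} with $a=2$, $u=s^2$, $u'=t^2$. Both $u,u'$ lie in $C_0$ (by the edge-case elimination), $u+u'=4=a^2$, and $\pm s$, $\pm t$ are exactly the square roots of $u$ and $u'$. Because $q\equiv 7\ (\mymod 8)$ gives $\eta(2)=1$, the lemma collapses to: $\eta(2\pm s)=\epsilon$ if and only if $\eta(2\pm t)=\epsilon$. Taking $\epsilon=-1$ translates $\eta(2\pm s)=-1$ into $\eta(2\pm t)=-1$, contradicting $\eta(2\pm t)=1$ proved above. This contradiction establishes the lemma.

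The main obstacle is spotting the identity $s^2+t^2=4$ and aligning the two derived sign patterns with the exact hypothesis of Lemma \ref{20240824lemma3}; once this alignment is seen, the congruence $q\equiv 7\ (\mymod 8)$ is precisely what activates the lemma, via $\eta(2)=1$, to produce the desired clash. The rest of the work, namely the edge-case check for $s=0$ or $t=0$ and the passage from one sign of $2\pm s$ (resp.\ $2\pm t$) to both signs, is routine but essential for a complete argument.
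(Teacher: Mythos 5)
Your proposal is correct and follows essentially the same route as the paper: the paper likewise disposes of the degenerate discriminants $b=\pm\frac{4}{3}$ first, writes the roots as $1\pm y$ and $-2\pm z$ with $y^2+z^2=4$, and derives the contradiction from Lemma \ref{20240824lemma3} with $a=2$ using $\eta(2)=1$. Your extra step of propagating the sign to both of $\eta(2\pm s)$ and $\eta(2\pm t)$ via the products $(2+s)(2-s)=t^2$ and $(2+t)(2-t)=s^2$ is exactly what makes the hypothesis of that lemma applicable, and the paper relies on the same fact implicitly.
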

\begin{proof}
  Note that $A_{01}(\frac{4}{3})=\{x\in C_{01}:\ x^2-2x+1=0\}\subset\{1\}$. Since $\eta(1+1)=\eta(2)=1$, we have $A_{01}(\frac{4}{3})=\emptyset$. Note that $A_{10}(-\frac{4}{3})=\{x\in C_{10}:\ x^2+4x+4=0\}\subset\{-2\}$. Since $\eta(-2+1)=\eta(-1)=-1$, we have $A_{10}(-\frac{4}{3})=\emptyset$.

  \indent Now assume that $b\not\in\{\pm\frac{4}{3}\}$. Assume, for a contradiction, that $x_1\in A_{01}(b)$ and $x_2\in A_{10}(b)$. Then both $\frac{\Delta_{01}}{4}=\frac{4-3b}{2}$ and $\frac{\Delta_{10}}{4}=\frac{3b+4}{2}$ are non-zero square elements in $\mathbb{F}_{q}$. Choose $y,z\in\mathbb{F}_{q}^*$ such that $y^2=\frac{4-3b}{2}$ and $z^2=\frac{3b+4}{2}$. Then we may assume that $x_1=1+y$ and $x_2=-2+z$. It is clear that $y^2+z^2=4$. Since $x_1\in C_{01}$, we have $\eta(x_1+1)=\eta(2+y)=-1$. Since $x_2\in C_{10}$, we have $\eta(x_2)=\eta(-2+z)=-1$, i.e., $\eta(2-z)=1$. By Lemma \ref{20240824lemma3}, this is impossible. Hence $\#A_{01}(b)$ and $\#A_{10}(b)$ cannot both be non-zero.
\end{proof}

Using the quadratic reciprocity law, we can see that
\begin{enumerate}[label=\roman*)]
  \item if $q\equiv 7\ (\mymod 12)$, then $3$ is a non-square element in $\mathbb{F}_{q}$;
  \item if $q\equiv 11\ (\mymod 12)$, then $3$ is a square element in $\mathbb{F}_{q}$.
\end{enumerate}

\begin{lemma}
  $\delta_{F_{2,\frac{1}{3}}}(1,\frac{4}{3})=\begin{cases}2&\mbox{if }q\equiv 7\ (\mymod 24),\\1&\mbox{if }q\equiv 23\ (\mymod 24).\end{cases}$
\end{lemma}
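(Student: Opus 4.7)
The plan is to directly enumerate all $x\in\mathbb{F}_q$ satisfying $D_1F_{2,\frac{1}{3}}(x)=\frac{4}{3}$, by partitioning $\mathbb{F}_q$ into $C_{00}\cup C_{01}\cup C_{10}\cup C_{11}\cup\{0,-1\}$ and checking each part. Since $b=u+1=\frac{4}{3}$, the element $x=0$ is automatically a solution (as $D_1F_{2,\frac{1}{3}}(0)=u+1$), while $x=-1$ is not (as $D_1F_{2,\frac{1}{3}}(-1)=u-1=-\frac{2}{3}\neq\frac{4}{3}$). So the count reduces to $1+\sum_{i,j}\#A_{ij}(\frac{4}{3})$, and the task is to compute these four numbers under the standing assumption $q\equiv 7\pmod{8}$, i.e.\ $\eta(2)=1$.

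For $A_{00}(\frac{4}{3})$ I would apply (\ref{20240825equation12}): the two conditions $2(3b\pm 4)\in C_0$ become $0\in C_0$ and $16\in C_0$, and since $0\notin C_0$ we get $\#A_{00}(\frac{4}{3})=0$. For $A_{11}(\frac{4}{3})$ I would apply (\ref{20240905equation9}): the condition $3b\pm 2\in C_1$ requires in particular $\eta(2)=-1$, which fails; hence $\#A_{11}(\frac{4}{3})=0$. For $A_{01}(\frac{4}{3})$ I would solve (\ref{20240905equation11}), which reduces to $(x-1)^2=0$, giving only $x=1$; since $\eta(1)=\eta(1+1)=\eta(2)=1$, we have $1\in C_{00}$, not $C_{01}$, so $\#A_{01}(\frac{4}{3})=0$.

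The only nontrivial piece is $A_{10}(\frac{4}{3})$. Equation (\ref{20240905equation12}) becomes $x^2+4x=0$, with roots $0$ and $-4$. Since $0\notin C_{10}$, only $x=-4$ may contribute. Now $\eta(-4)=\eta(-1)\eta(4)=-1$, so $-4$ is always a non-square (as required), and the remaining condition is $\eta(-4+1)=\eta(-3)=1$, equivalently $\eta(3)=-1$. Invoking the quadratic reciprocity facts stated just before the lemma, $\eta(3)=-1$ exactly when $q\equiv 7\pmod{12}$, which under the standing hypothesis $q\equiv 7\pmod 8$ is equivalent to $q\equiv 7\pmod{24}$; symmetrically, $\eta(3)=1$ corresponds to $q\equiv 23\pmod{24}$.

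Putting the pieces together yields $\delta_{F_{2,\frac{1}{3}}}(1,\frac{4}{3})=1+\#A_{10}(\frac{4}{3})$, which is $2$ when $q\equiv 7\pmod{24}$ and $1$ when $q\equiv 23\pmod{24}$. There is no real obstacle here; the only spot that requires slight care is making sure the boundary root $x=0$ of the $A_{10}$ equation is excluded by the condition $\eta(x)=-1$ (it is counted separately as the contribution from $\{0,-1\}$), so that no solution is double-counted.
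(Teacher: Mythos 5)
Your proposal is correct and follows essentially the same route as the paper: enumerate the contributions from $\{0,-1\}$ and from each $C_{ij}$, reduce everything to $\# A_{10}(\frac{4}{3})$, and convert $\eta(3)=\mp 1$ into the congruences mod $24$ via the stated reciprocity facts. The only cosmetic difference is that the paper disposes of $\# A_{01}(\frac{4}{3})=0$ by citing the computation already done in the proof of Lemma \ref{20240825lemma12}, whereas you rederive it directly.
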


\begin{proof}
  We know that $D_1F_{2,\frac{1}{3}}(0)=\frac{4}{3}$. Since $3\cdot\frac{4}{3}-4=0\not\in C_0$, we have $\# A_{00}(\frac{4}{3})=0$. Since $3\cdot\frac{4}{3}-2=2\not\in C_1$, we have $\# A_{11}(\frac{4}{3})=0$. Note that $A_{10}(\frac{4}{3})=\{x\in C_{10}:\ x^2+4x=0\}\subset\{0,-4\}$. Since $\eta(0)=0$ and $\eta(-4)=-1$, we have
  $$\# A_{10}(\frac{4}{3})=\begin{cases}
      1 & \mbox{if }\eta(3)=-1, \\
      0 & \mbox{if }\eta(3)=1.
    \end{cases}$$
  Finally, note that we have shown that $\# A_{01}(\frac{4}{3})=0$ in the proof of Lemma \ref{20240825lemma12}.
\end{proof}

\begin{lemma}
  $\delta_{F_{2,\frac{1}{3}}}(1,-\frac{2}{3})=\begin{cases}2&\mbox{if }q\equiv 7\ (\mymod 24),\\1&\mbox{if }q\equiv 23\ (\mymod 24).\end{cases}$
\end{lemma}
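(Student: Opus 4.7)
The plan is to follow the same scheme as the preceding lemma. First observe that $D_1 F_{2,\frac{1}{3}}(-1) = -\frac{2}{3}$, so the point $x = -1$ is always a solution; since $-1 \notin C_{ij}$ for any $i,j \in \{0,1\}$, it contributes $1$ to $\delta_{F_{2,\frac{1}{3}}}(1, -\frac{2}{3})$ separately from the four counts $\#A_{ij}(-\frac{2}{3})$. The task then reduces to evaluating each $\#A_{ij}(-\frac{2}{3})$ and summing. Note that both stated residues satisfy $q \equiv 7 \pmod 8$, so $\eta(2) = 1$; the distinction $q \equiv 7$ vs.\ $23 \pmod{24}$ corresponds to $\eta(3) = -1$ vs.\ $\eta(3) = 1$ respectively.

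From (\ref{20240825equation12}) with $b = -\frac{2}{3}$, the two tests $2(3b+4) = 4$ and $2(3b-4) = -12$ reduce to $4 \in C_0$ (automatic) and $\eta(-12) = -\eta(3) = 1$. Hence $\#A_{00}(-\frac{2}{3}) = 1$ if $q \equiv 7 \pmod{24}$ and $= 0$ if $q \equiv 23 \pmod{24}$. From (\ref{20240905equation9}), since $3b + 2 = 0 \notin C_1$, we get $\#A_{11}(-\frac{2}{3}) = 0$. For $\#A_{10}(-\frac{2}{3})$, equation (\ref{20240905equation12}) factors as $(x+1)(x+3) = 0$; the root $x = -1$ is not in any $C_{ij}$, and $x = -3$ fails the condition $\eta(x+1) = 1$ since $\eta(-2) = -\eta(2) = -1$, so $\#A_{10}(-\frac{2}{3}) = 0$.

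The delicate step is $\#A_{01}(-\frac{2}{3})$. Equation (\ref{20240905equation11}) becomes $x^2 - 2x - 2 = 0$, with discriminant $12$. If $\eta(3) = -1$, there are no roots in $\mathbb{F}_q$, so $\#A_{01}(-\frac{2}{3}) = 0$ trivially. If $\eta(3) = 1$, the roots are $1 \pm \sqrt{3}$, and one must show neither lies in $C_{01}$. I will apply Lemma \ref{20240824lemma3} with $a = 2$ and $(u, u') = (3, 1)$, noting $u + u' = 4 = a^2$ and both $u, u' \in C_0$ under $\eta(3) = 1$; since $\eta(2 \pm 1) = 1$, the lemma yields $\eta(2 \pm \sqrt{3}) = \eta(2) \cdot 1 = 1$, so $\eta((1 \pm \sqrt{3}) + 1) = 1 \neq -1$, ruling out membership in $C_{01}$. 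Hence $\#A_{01}(-\frac{2}{3}) = 0$ in this subcase too.

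Summing gives $\delta_{F_{2,\frac{1}{3}}}(1, -\frac{2}{3}) = 1 + 1 = 2$ if $q \equiv 7 \pmod{24}$ and $= 0 + 1 = 1$ if $q \equiv 23 \pmod{24}$. The main obstacle I anticipate is justifying $\#A_{01}(-\frac{2}{3}) = 0$ in the subcase $\eta(3) = 1$, where the associated quadratic splits over $\mathbb{F}_q$; Lemma \ref{20240824lemma3} supplies the clean trick for evaluating $\eta(2 \pm \sqrt{3})$ without manipulating the square root directly.
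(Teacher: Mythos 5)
Your proposal is correct and follows essentially the same route as the paper: the same case-by-case evaluation of the $\#A_{ij}(-\tfrac{2}{3})$, with the key step being Lemma \ref{20240824lemma3} applied with $a=2$, $u=3$, $u'=1$ to show $\eta(2\pm\sqrt{3})=1$. The only (harmless) deviation is in the subcase $\eta(3)=-1$ for $\#A_{01}$, where you observe directly that the discriminant $12$ is a non-square, whereas the paper invokes Lemma \ref{20240824lemma1} together with $\#A_{00}(-\tfrac{2}{3})>0$; both arguments are valid.
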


\begin{proof}
  We know that $D_1F_{2,\frac{1}{3}}(-1)=-\frac{2}{3}$. Since $3\cdot(-\frac{2}{3})+4=2\in C_0$ and $3\cdot(-\frac{2}{3})-4=-6$, we have
  $$\# A_{00}(-\frac{2}{3})=\begin{cases}
      1 & \mbox{if }\eta(3)=-1, \\
      0 & \mbox{if }\eta(3)=1.
    \end{cases}$$
  Since $3\cdot(-\frac{2}{3})+2=0\not\in C_1$, we have $\# A_{11}(-\frac{2}{3})=0$. Note that $A_{10}(-\frac{2}{3})=\{x\in C_{10}:\ x^2+4x+3=0\}\subset\{-1,-3\}$. Since $-1+1=0\not\in C_0$, we have $-1\not\in A_{10}(-\frac{2}{3})$. Since $-3+1=-2\not\in C_0$, we have $-3\not\in A_{10}(-\frac{2}{3})$. Hence $\# A_{10}(-\frac{2}{3})=0$.

  Note that $A_{01}(-\frac{2}{3})=\{x\in C_{01}:\ x^2-2x-2=0\}$. If $\eta(3)=-1$, then $\# A_{00}(-\frac{2}{3})>0$. By Lemma \ref{20240824lemma1}, we have $\# A_{01}(-\frac{2}{3})=0$. Now assume that $\eta(3)=1$. The two roots of $x^2-2x-2$ in $\overline{\mathbb{F}_q}$ are $x_1=1+\sqrt{3}$ and $x_2=1-\sqrt{3}$. Put $a=2$, $u=3$ and $u'=1$. Then $u,u'\in C_0$ and $u+u'=a^2$. Moreover, we have $\eta(a+\sqrt{u'})=\eta(3)=1$ and $\eta(a-\sqrt{u'})=\eta(1)=1$. By Lemma \ref{20240824lemma3}, we have $\eta(a\pm\sqrt{u})=\eta(2\pm\sqrt{3})=1$, which implies that $x_i+1\not\in C_1$ for $i=1,2$. Hence $\# A_{01}(-\frac{2}{3})=0$.
\end{proof}

We can obtain the following corollary from Lemma \ref{20240825lemma9}, Lemma \ref{20240825lemma11} and the above lemmas.

\begin{corollary}
  Assume that $q\equiv 7\ (\mymod\ 8)$. Then for any $b\in\mathbb{F}_q$, $\delta_{F_{2,\frac{1}{3}}}(1,b)\le 3$. Moreover, $\delta_{F_{2,\frac{1}{3}}}(1,b)=3$ if and only if one of the following two cases occurs:
  \begin{enumerate}
    \item $\# A_{10}(b)=2$ and $\# A_{11}(b)=1$ (note that $\# A_{00}(b)$ and $\# A_{01}(b)$ are automatically zero);
    \item $\# A_{00}(b)=\# A_{11}(b)=\# A_{10}(b)=1$ (note that $\# A_{01}(b)$ is automatically zero).
  \end{enumerate}
\end{corollary}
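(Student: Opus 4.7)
The plan is to reduce the claim to a short case analysis on the four counts $\#A_{ij}(b)$, $(i,j)\in\{0,1\}^2$, exploiting a single observation: under the standing hypotheses $u=\tfrac13$ and $q\equiv 7\pmod 8$, the character satisfies $\eta(u)=\eta(1+u)=\eta(1-u)=\eta(3)$. Indeed $\eta(u)=\eta(\tfrac13)=\eta(3)$ and $\eta(1+u)=\eta(\tfrac43)=\eta(3)$, while $q\equiv 7\pmod 8$ forces $\eta(2)=1$, so $\eta(1-u)=\eta(\tfrac23)=\eta(3)$ as well. This single alignment simultaneously activates the four exclusion principles proved in Section~\ref{20240907section2}: Lemma~\ref{20240825lemma9}, Lemma~\ref{20240825lemma11}, Lemma~\ref{20240824lemma1}, and Lemma~\ref{20240825lemma12}, which will jointly bound $\sum_{i,j}\#A_{ij}(b)$ by $3$.

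First I would set aside the values $b\in\{\tfrac43,-\tfrac23\}$, equal to $D_1F_{2,1/3}(0)$ and $D_1F_{2,1/3}(-1)$: the two lemmas immediately preceding already give $\delta_{F_{2,1/3}}(1,b)\le 2$ for these, so they are below the bound and outside the equality regime. For every other $b$, the partition $\mathbb{F}_q\setminus\{0,-1\}=C_{00}\cup C_{01}\cup C_{10}\cup C_{11}$ produces
$$\delta_{F_{2,1/3}}(1,b)=\#A_{00}(b)+\#A_{01}(b)+\#A_{10}(b)+\#A_{11}(b),$$
where (\ref{20240825equation12}) and (\ref{20240905equation9}) restrict $\#A_{00}(b),\#A_{11}(b)\in\{0,1\}$, while the quadratic equations (\ref{20240905equation11})--(\ref{20240905equation12}) give $\#A_{01}(b),\#A_{10}(b)\in\{0,1,2\}$. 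I would then split on which of $\#A_{01}(b),\#A_{10}(b)$ is positive: by Lemma~\ref{20240825lemma12} at most one of them is. If $\#A_{01}(b)\ge 1$, Lemma~\ref{20240824lemma1} forces $\#A_{00}(b)=0$, and if moreover $\#A_{01}(b)=2$ then Lemma~\ref{20240825lemma11} forces $\#A_{11}(b)=0$, giving $\delta\le 2$. If $\#A_{01}(b)=0$ and $\#A_{10}(b)=2$, Lemma~\ref{20240825lemma9} forces $\#A_{00}(b)=0$, so $\delta\le 2+\#A_{11}(b)\le 3$, with equality exactly when $\#A_{11}(b)=1$ (this is case 1). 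Finally, if $\#A_{01}(b)=0$ and $\#A_{10}(b)\le 1$, the three remaining summands are each at most $1$, giving $\delta\le 3$, with equality exactly when $\#A_{00}(b)=\#A_{10}(b)=\#A_{11}(b)=1$ (this is case 2). The ``automatic'' vanishings recorded in the statement are then direct restatements of the relevant exclusion lemmas.

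The main obstacle is really conceptual bookkeeping rather than any new estimate: one must verify that all four incompatibility lemmas are active simultaneously in this specific regime and check that no configuration of the four counts with $\delta=4$ slips through the cracks. Once the hinge $\eta(2)=1$ is used to align $\eta(u),\eta(1+u),\eta(1-u)$, the argument is purely combinatorial and invokes no further character-sum machinery.
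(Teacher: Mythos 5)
Your proposal is correct and follows essentially the same route as the paper: the corollary is obtained by combining Lemma \ref{20240825lemma9} and Lemma \ref{20240825lemma11} (whose hypotheses $\eta(1+u)=\eta(u)$ and $\eta(1-u)=\eta(u)$ you rightly verify via $\eta(2)=1$ when $q\equiv 7\ (\mymod 8)$) with the exclusion Lemmas \ref{20240824lemma1} and \ref{20240825lemma12} and the explicit computations at $b=\frac{4}{3},-\frac{2}{3}$. The resulting case analysis on the four counts $\#A_{ij}(b)$ is exactly the bookkeeping the paper leaves implicit, and it correctly isolates the two equality configurations.
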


\begin{theorem}\label{20240908them1}
  Assume that $q\equiv 7\ (\mymod\ 8)$ and $q>7$. Then $\delta_{F_{2,\frac{1}{3}}}=3$.
\end{theorem}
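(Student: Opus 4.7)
The plan is as follows. By the corollary immediately preceding, $\delta_{F_{2,1/3}} \le 3$, so it suffices to exhibit a single $b \in \mathbb{F}_q$ achieving $\delta_{F_{2,1/3}}(1,b) = 3$. I will target the first of the two cases listed there, namely $\#A_{10}(b) = 2$ together with $\#A_{11}(b) = 1$. Since $\tau_1 = 4$, $\tau_2 = 2$ and $\eta(2) = 1$ (as $q \equiv 7\ (\mymod 8)$), the characterizations (\ref{20240905equation10}) and (\ref{20240905equation9}) become six sign conditions on $b$. Introducing the parametrization $y^2 = \Delta_{10} = 8 + 6b$ (forced by $\eta(\Delta_{10}) = 1$), each becomes a condition on the single variable $y$, and a short check shows that $\eta(3b+2) = -1$ is automatically implied by the others; the surviving effective system reads
\begin{equation*}
\eta(y-4) = -1,\quad \eta(y+4) = 1,\quad \eta(y-2) = 1,\quad \eta(y+2) = -1,\quad \eta(y^2-12) = -1.
\end{equation*}

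Let $M$ be the number of $y \in \mathbb{F}_q$ satisfying these five conditions. Because $y \mapsto -y$ preserves every condition while sending $b$ to itself, the number of valid $b$ equals $M/2$, so it suffices to prove $M > 0$. Writing each condition as $\bigl(1 + \varepsilon_i \eta(p_i(y))\bigr)/2$ with appropriate signs $\varepsilon_i \in \{\pm 1\}$, expanding the product, and regrouping gives
\begin{equation*}
2^5 M \;=\; \sum_{I \subseteq \{1,\dots,5\}} \varepsilon_I\, S_I \;-\; R,
\qquad
S_I \;=\; \sum_{y \in \mathbb{F}_q} \eta\Bigl(\prod_{i \in I} p_i(y)\Bigr),
\end{equation*}
where $\varepsilon_I = \prod_{i \in I} \varepsilon_i$ and $R$ collects the contribution from the at most six values of $y$ at which some $p_i$ vanishes, bounded above by an explicit absolute constant.

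The dominant term is $S_\emptyset = q$. For every nonempty $I$ the product $\prod_{i \in I} p_i$ is squarefree ($p_1, p_2, p_3, p_4$ are four distinct linear polynomials and $p_5 = y^2 - 12$ is a separable quadratic whose roots $\pm 2\sqrt{3}$ do not coincide with $\pm 2, \pm 4$ in any characteristic compatible with $q \equiv 7\ (\mymod 8)$) and in particular not a perfect square in $\overline{\mathbb{F}_q}[y]$. Theorem \ref{20240904them3} then yields $|S_I| \le (d_I - 1)\sqrt{q}$, with $d_I$ the number of distinct roots of $\prod_{i \in I} p_i$, and the small-$|I|$ cases can be evaluated exactly via Lemma \ref{20240825lemma5} to sharpen the estimate. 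Summing over the $31$ nonempty subsets will yield an inequality of the shape $2^5 M \ge q - C_1\sqrt{q} - C_2$ with small explicit constants $C_1, C_2$, giving $M > 0$ for all $q$ above an explicit threshold on the order of a few thousand; the finitely many $q \equiv 7\ (\mymod 8)$ with $q > 7$ lying below this threshold will be handled by direct machine computation of the differential spectrum.

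The main obstacle will be the bookkeeping rather than any deep new ingredient: verifying case by case that no $\prod_{i \in I} p_i$ is a perfect square (so Theorem \ref{20240904them3} applies), estimating the correction term $R$ sharply enough, and keeping the numerical threshold low enough that the residual exhaustive check remains tractable.
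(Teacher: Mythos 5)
Your proposal is correct and follows essentially the same route as the paper: it targets the same case $\#A_{10}(b)=2$, $\#A_{11}(b)=1$ from the preceding corollary, uses the same parametrization by a square root of the discriminant (your $y$ is just twice the paper's, so your five conditions on $y\mp 4$, $y\mp 2$, $y^2-12$ are a rescaling of the paper's conditions on $g_1,\dots,g_5$), makes the same observation that $\eta(3b+2)=-1$ is implied by the other conditions, and then runs the identical character-sum expansion over subsets of $[5]$ with Lemma \ref{20240825lemma5} for the exact small cases, Theorem \ref{20240904them3} for the rest, and a machine check below the resulting threshold. The paper's threshold is $q\ge 58^2$, which is the order of magnitude you anticipate.
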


\begin{proof}
  Put
  $$\Lambda=\{b\in\mathbb{F}_q:\ \# A_{10}(b)=2\ \mbox{and}\ \# A_{11}(b)=1\}.$$
  We claim that if $q$ is sufficiently large, then $\Lambda\ne\emptyset$.  Note that
  \begin{align*}
    \Lambda & =\{b\in\mathbb{F}_q:\ \# A_{10}(b)=2\ \mbox{and}\ \# A_{11}(b)=1\}          \\
            & =\{b\in\mathbb{F}_q:\ \eta(3b+4)=1,\ \eta(-2\pm\sqrt{\frac{3b+4}{2}})=-1,   \\
            & \qquad\qquad\qquad \eta(-1\pm\sqrt{\frac{3b+4}{2}})=1,\ \eta(3b\pm 2)=-1\}.
  \end{align*}
  Making the substitution $y^2=\frac{3b+4}{2}$, we have
  \begin{align*}
    2\#\Lambda & =\#\left\{y\in\mathbb{F}_q^*:\ \begin{cases} \eta(-2\pm y)=-1,\\ \eta(-1\pm y)=1,\\ \eta(y^2-3)=-1\end{cases}\right\} \\
               & =\#\left\{y\in\mathbb{F}_q^*:\ \begin{cases}\eta(2\pm y)=1,\\ \eta(-1\pm y)=1,\\ \eta(3-y^2)=1\end{cases}\right\}     \\
               & =\frac{1}{32}\sum\limits_{y\in\mathbb{F}_q\setminus A}\displaystyle\prod_{i=1}^5\Big(1+\eta\big(g_i(y)\big)\Big),
  \end{align*}
  where
  \begin{equation*}
    A=\begin{cases}
      \{0,\ \pm 2,\ \pm 1,\ \pm\sqrt{3}\} & \mbox{if }\eta(3)=1,  \\
      \{0,\ \pm 2,\ \pm 1\}               & \mbox{if }\eta(3)=-1,
    \end{cases}
  \end{equation*}
  and
  \begin{equation*}
    \begin{cases}
      g_1(y)=2+y,  & g_2(y)=2-y,  \\
      g_3(y)=-1+y, & g_4(y)=-1-y, \\
      g_5(y)=3-y^2.
    \end{cases}
  \end{equation*}
  Note that $g_4(0)=-1\in C_1$, $g_5(2)=g_5(-2)=-1\in C_1$ and $g_4(1)=g_3(-1)=-2\in C_1$. Moreover, if $\eta(3)=1$, since $g_3(\sqrt{3})g_4(\sqrt{3})=g_3(-\sqrt{3})g_4(-\sqrt{3})=-2\in C_1$, we have $g_3(\sqrt{3})\in C_1$ or $g_4(\sqrt{3})\in C_1$, and $g_3(-\sqrt{3})\in C_1$ or $g_4(-\sqrt{3})\in C_1$. Hence
  $$\sum\limits_{y\in A}\displaystyle\prod_{i=1}^5\Big(1+\eta\big(g_i(y)\big)\Big)=0,$$
  which implies that
  $$64\cdot\#\Lambda=\sum\limits_{y\in\mathbb{F}_q}\displaystyle\prod_{i=1}^5\Big(1+\eta\big(g_i(y)\big)\Big)=\sum\limits_{I}S_I,$$
  where $I$ runs over all subsets of $[5]$ and
  $$S_I=\sum\limits_{y\in\mathbb{F}_q}\eta\big(\displaystyle\prod_{i\in I}g_i(y)\big).$$
  Note that the $g_i$'s have no common roots.
  \begin{enumerate}
    \item If $I=\emptyset$, then $S_I=\sum_{y\in\mathbb{F}_q}\eta(1)=q$.
    \item If $I=\{i\}$ for some $i\in [4]$, then $S_I=\sum_{y\in\mathbb{F}_q}\eta(g_i(y))=0$ since $g_i$ is a linear function. By Lemma \ref{20240825lemma5}, we have $S_{\{5\}}=-\eta(-1)=1$. Hence $\sum_{\# I=1}S_I=1$.
    \item By Lemma \ref{20240825lemma5}, it is easy to see that
          $$\sum\limits_{\substack{I\subset[4],\\\# I=2}}S_I=-2\eta(-1)-\eta(1)-\eta(-1)-\eta(1)-\eta(-1)=2.$$
          By Theorem \ref{20240904them3}, we have $|S_{\{i,5\}}|\le 2\sqrt{q}$ for any $i\in[4]$, which implies that $\sum_{\# I=2}S_I\ge 2-8\sqrt{q}$.
    \item By Theorem \ref{20240904them3}, we have $|S_{I}|\le 2\sqrt{q}$ for any $I\subset [4]$ with $\# I=3$ and $|S_{I\cup\{5\}}|\le 3\sqrt{q}$ for any $I\subset [4]$ with $\# I=2$. It follows that
          $$\sum\limits_{\# I=3}S_I\ge -2\sqrt{q}\cdot\binom{4}{3}-3\sqrt{q}\cdot\binom{4}{2}=-26\sqrt{q}.$$
    \item By Theorem \ref{20240904them3}, we have $|S_{[4]}|\le 3\sqrt{q}$ and $|S_{I\cup\{5\}}|\le 4\sqrt{q}$ for any $I\subset[4]$ with $\# I=3$. It follows that
          $$\sum\limits_{\# I=4}S_I\ge -3\sqrt{q}-4\sqrt{q}\cdot\binom{4}{3}=-19\sqrt{q}.$$
    \item By Theorem \ref{20240904them3}, we have $|S_{[5]}|\le 5\sqrt{q}$.
  \end{enumerate}
  In conclusion, we have
  \begin{align*}
    \sum\limits_{I}S_I & \ge q+1+2-8\sqrt{q}-26\sqrt{q}-19\sqrt{q}-5\sqrt{q} \\
                       & =q-58\sqrt{q}+3.
  \end{align*}
  If $q\ge 58^2$, then $\#\Lambda=\frac{1}{64}\cdot\sum_{I}S_I>0$. For the case where $7<q<58^2$, we directly verify the theorem using a Python program, which can be found in the appendix.
\end{proof}

\begin{remark}
  When $q=7$, we have $\delta_{F_{2,\frac{1}{3}}}=2$, i.e., $F_{2,\frac{1}{3}}$ is an APN function.
\end{remark}

\begin{theorem}\label{20240908them2}
  Assume that $q\equiv 3\ (\mymod\ 8)$, $p\ne 3$ and $q>43$. Then $\delta_{F_{2,\frac{1}{3}}}=4$.
\end{theorem}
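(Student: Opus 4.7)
The plan is to match the upper bound $\delta_{F_{2,\frac{1}{3}}} \le 4$ (Corollary \ref{20240925corol1}) by producing, for any $q > 43$ with $q\equiv 3\ (\mymod 8)$ and $p\ne 3$, a value $b\in\mathbb{F}_q$ with $\delta_{F_{2,\frac{1}{3}}}(1,b)=4$. Under these hypotheses one has $\eta(2)=-1$, which forces $\eta(1+u)=\eta(u-1)=\eta(u)$ and $\eta(1-u)=-\eta(u)$ for $u=\frac{1}{3}$, placing us in case (2) of Corollary \ref{20240828corol1}. Together with Lemma \ref{20240824lemma1} (which forbids $\# A_{00}(b)$ and $\# A_{01}(b)$ from being simultaneously positive), I target the configuration $\# A_{00}(b)=0$, $\# A_{01}(b)=2$, $\# A_{11}(b)=1$, $\# A_{10}(b)=1$.

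I would parameterize candidate $b$ by $t\in\mathbb{F}_q^*$ through $b=\frac{4-2t^2}{3}$, so that the roots of the quadratic (\ref{20240905equation11}) become $1\pm t$. Unwinding the conditions (\ref{20240829equation1}), (\ref{20240905equation9}), and (\ref{20240905equation10}) with $\tau_1=4$, $\tau_2=2$, and $\eta(2)=-1$ shows that the target configuration is realized precisely when
$$\eta(1+t)=\eta(1-t)=1,\qquad \eta(2+t)=\eta(2-t)=-1,\qquad \eta(3-t^2)=1.$$
The first four relations give $\# A_{01}(b)=2$ together with $\eta(3b-2)=-1$ (using $3b-2=2(1-t)(1+t)$), while the fifth yields $\eta(3b+2)=-\eta(3-t^2)=-1$ (from $3b+2=2(3-t^2)$), hence $\# A_{11}(b)=1$. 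For $\# A_{10}(b)$: Lemma \ref{20240824lemma3} applied with $a=2$, $u'=t^2$, $u=4-t^2$ turns $\eta(2\pm t)=-1$ into $\eta(2\pm\sqrt{4-t^2})=1$, automatically giving the conditions $\eta(-\tau_1\pm\sqrt{\Delta_{10}})=-\eta(2)$; moreover $\eta(1-\sqrt{4-t^2})\eta(1+\sqrt{4-t^2})=\eta(t^2-3)=-1$, so exactly one of $\eta(1\pm\sqrt{4-t^2})$ equals $-1$, forcing $\# A_{10}(b)=1$.

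The final step is to count, via character sums, the number $N$ of $t\in\mathbb{F}_q$ meeting the five conditions. Setting $g_1(t)=1+t$, $g_2(t)=1-t$, $g_3(t)=2+t$, $g_4(t)=2-t$, $g_5(t)=3-t^2$ and signs $\epsilon_1=\epsilon_2=\epsilon_5=1$, $\epsilon_3=\epsilon_4=-1$, I would write
$$32N=\sum_{t\in\mathbb{F}_q}\prod_{i=1}^{5}\Bigl(1+\epsilon_i\eta\big(g_i(t)\big)\Bigr)+O(1),$$
with the $O(1)$ absorbing the at most six degenerate $t$ where some $g_i$ vanishes. Expanding the product into $S_I=\sum_t\eta\bigl(\prod_{i\in I}g_i\bigr)$ for $I\subset[5]$, the main term $S_\emptyset=q$ and the value $S_{\{5\}}=1$ from Lemma \ref{20240825lemma5} dominate (the linear singletons vanish), while every other $S_I$ has $\prod_{i\in I}g_i$ not a square in $\overline{\mathbb{F}_q}[t]$ (the roots of the $g_i$ are pairwise distinct), so Theorem \ref{20240904them3} gives $|S_I|\le(d_I-1)\sqrt{q}$ with $d_I\le 6$. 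Summing produces a lower bound of the form $N\ge (q-C\sqrt{q}-C')/32$ for explicit constants $C,C'$, so $N>0$ once $q$ exceeds an analytic threshold. The main obstacle I anticipate is the usual gap between this Weil-derived threshold and the bound $q>43$; I would close it by a Python verification sweep for the finitely many $q\equiv 3\ (\mymod 8)$ with $p\ne 3$ between $43$ and the analytic threshold, in the same style as Theorem \ref{20240908them1}.
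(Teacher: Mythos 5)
Your proposal is correct, and while it follows the paper's overall template (exhibit a single $b$ realizing the configuration $\# A_{00}(b)=0$, $\# A_{01}(b)=2$, $\# A_{11}(b)=\# A_{10}(b)=1$, count such $b$ by expanding products of $1\pm\eta(g_i)$, bound with a Weil estimate, and close the finite gap by computer), the analytic core is genuinely different and in fact substantially cleaner. The paper substitutes $y^2=\frac{3b+4}{2}$, $z^2=\frac{4-3b}{2}$ and is left with a character sum over the conic $y^2+z^2=4$ in which conditions on both $y$ and $z$ survive; handling the mixed terms forces it through the plane-curve machinery (absolute irreducibility via Lemmas \ref{20240904lemma2} and \ref{20240904lemma1}, then Theorem \ref{20240904them1} with its $5d^{13/3}$ error term), which inflates the threshold to $q\ge 1681^2$ and requires a heavy multiprocess verification. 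Your parameterization $b=\frac{4-2t^2}{3}$ eliminates the second square root entirely: the conditions on $s=\sqrt{4-t^2}$ collapse to univariate conditions on $t$ because Lemma \ref{20240824lemma3} converts $\eta(2\pm t)=-1$ into $\eta(2\pm s)=1$ (so both roots of (\ref{20240905equation12}) satisfy $\eta(x)=-1$), while $\eta(-1+s)\eta(-1-s)=\eta(t^2-3)=-1$ forces exactly one of them into $C_{10}$ — this is the same bookkeeping the paper does on the conic, but done once and for all in terms of $t$. The resulting sum needs only the univariate bound of Theorem \ref{20240904them3}, giving roughly $32N\ge q-64\sqrt{q}-O(1)$ and a threshold of a few thousand rather than nearly three million, so the residual computer check is trivial. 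Two small points to tidy up: your exceptional set should also include $t=0$ (where the two roots of (\ref{20240905equation11}) coincide, so $\# A_{01}(b)\le 1$ even though all five character conditions can hold), and of course the explicit constants and the finite sweep for $43<q<$ (threshold) still have to be carried out — but both are routine and your plan already anticipates the latter.
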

\begin{proof}
  We want to show that there exists $b\in\mathbb{F}_q$ such that $\# A_{01}(b)=2$ and $\# A_{11}(b)=\# A_{10}(b)=1$. By (\ref{20240905equation9}), (\ref{20240905equation11}) and (\ref{20240905equation12}), we only need to prove that the following set is non-empty:
  \begin{align*}
    \Lambda & =\left\{b\in\mathbb{F}_q:\ \begin{cases}\eta(3b\pm 2)=-1,                                  \\
                                           \eta(4\pm 3b)=-1,                                  \\  \eta(1\pm\sqrt{\frac{4-3b}{2}})=1,\\\eta(2\pm\sqrt{\frac{4-3b}{2}})=-1,\\
                                           \eta(y-2)=-1, \mbox{where }y\ \mbox{is the square} \\
                                           \ \ \mbox{root of }\frac{3b+4}{2}\ \mbox{such that }\eta(y-1)=1
                                         \end{cases} \right\}.
  \end{align*}
  Making the substitution $y^2=\frac{3b+4}{2}$ and $z^2=\frac{4-3b}{2}$, we have $y^2+z^2=4$ and
  \begin{align*}
    \#\Lambda & =\frac{1}{2}\cdot\#\left\{(y,z)\in{\mathbb{F}_q^*}^2:\ \begin{cases}
                                                                         y^2+z^2=4,       \\                                    \eta(y-2)=-1,    \\
                                                                         \eta(y-1)=1,     \\
                                                                         \eta(1\pm z)=1,  \\
                                                                         \eta(2\pm z)=-1, \\
                                                                         \eta(y^2-1)=1,   \\
                                                                         \eta(y^2-3)=1
                                                                       \end{cases}\right\}       \\
              & =\frac{1}{2}\cdot\#\left\{(y,z)\in{\mathbb{F}_q^*}^2:\ \begin{cases}
                                                                         y^2+z^2=4,       \\                                          \eta(y-2)=-1,    \\
                                                                         \eta(y\pm 1)=1,  \\
                                                                         \eta(1\pm z)=1,  \\
                                                                         \eta(2\pm z)=-1, \\
                                                                         \eta(y^2-3)=1
                                                                       \end{cases}\right\}.
  \end{align*}
  Since $y^2+z^2=4$, by Lemma \ref{20240824lemma3}, we have $\eta(2\pm z)=-1$ if and only if $\eta(2\pm y)=1$. It follows that
  \begin{align*}
    \#\Lambda & =\frac{1}{2}\cdot\#\left\{(y,z)\in{\mathbb{F}_q^*}^2:\ \begin{cases}
                                                                         y^2+z^2=4,      \\                                          \eta(2\pm y)=1,    \\
                                                                         \eta(y\pm 1)=1, \\
                                                                         \eta(1\pm z)=1, \\
                                                                         \eta(y^2-3)=1
                                                                       \end{cases}\right\} \\
              & =\frac{1}{2^8}\cdot\Bigg(\sum\limits_IS_I-\sum\limits_{(y,z)\in A}\displaystyle\prod_{i=1}^7\Big(1+\eta\big(p_i(y,z)\big)\Big)\Bigg)
  \end{align*}
  where $I$ runs over all subsets of $[7]$,
  $$S_I=\sum\limits_{\substack{y,z\in\mathbb{F}_q\\y^2+z^2=4}}\eta\big(\displaystyle\prod_{i\in I}p_i(y,z)\big),$$
  $$A=\begin{cases}
      \big\{(\pm 2,0),(0,\pm 2)\big\}      & \mbox{if }\eta(3)=-1, \\
      \big\{(\pm 2,0),(\pm\sqrt{3},\pm 1), & \mbox{if }\eta(3)=1,  \\
      \ \ (0,\pm 2),(\pm 1,\pm\sqrt{3})\big\}
    \end{cases}$$
  and
  \begin{align*}
     & p_1(y,z)=2+y,   &  & p_2(y,z)=2-y, \\
     & p_3(y,z)=y+1,   &  & p_4(y,z)=y-1, \\
     & p_5(y,z)=y^2-3,                    \\
     & p_6(y,z)=1+z,   &  & p_7(y,z)=1-z,
  \end{align*}
  Since $p_3(-2,0)=p_4(0,-2)=p_7(0,2)=-1\in C_1$, we have
  $$\displaystyle\prod_{i=1}^7\Big(1+\eta\big(p_i(-2,0)\big)\Big)=\displaystyle\prod_{i=1}^7\Big(1+\eta\big(p_i(0,\pm 2)\big)\Big)=0.$$
  Moreover, if $\eta(3)=1$, since $p_3(\sqrt{3},1)\cdot p_4(\sqrt{3},1)=2$ and $\eta(2)=-1$, we have either $p_3(\sqrt{3},1)\in C_1$ or $p_4(\sqrt{3},1)\in C_1$. Hence
  $$\displaystyle\prod_{i=1}^7\Big(1+\eta\big(p_i(\sqrt{3},1)\big)\Big)=0.$$
  Similarly, we can show that
  $$\displaystyle\prod_{i=1}^7\Big(1+\eta\big(p_i(\pm\sqrt{3},\pm1)\big)\Big)=0.$$
  It follows that
  $$\sum\limits_{(y,z)\in A}\displaystyle\prod_{i=1}^7\Big(1+\eta\big(p_i(y,z)\big)\Big)\le 5\cdot 2^6,$$
  which implies that $\#\Lambda\ge\frac{1}{2^8}(\sum_IS_I-320)$. If $I=\emptyset$, then by Lemma \ref{20240903lemma1}, we have
  $$ S_{I}=\#\left\{(y,z)\in\mathbb{F}_q^2:\ y^2+z^2=4\right\}=q+1.$$
  If $I=I^{(1)}=\{1,2\}$, then
  \begin{align*}
    S_I & =\sum\limits_{\substack{y,z\in\mathbb{F}_q                  \\y^2+z^2=4}}\eta\big(4-y^2\big)=\sum\limits_{\substack{y,z\in\mathbb{F}_q \\y^2+z^2=4}}\eta(z^2)\\
        & =\#\left\{(y,z)\in\mathbb{F}_q^2:\ y^2+z^2=4\right\}-2=q-1.
  \end{align*}
  If $I=I^{(2)}=\{5,6,7\}$, then
  \begin{align*}
    S_I & =\sum\limits_{\substack{y,z\in\mathbb{F}_q                    \\y^2+z^2=4}}\eta\big((y^2-3)(1-z^2)\big)=\sum\limits_{\substack{y,z\in\mathbb{F}_q \\y^2+z^2=4}}\eta(y^2-3)^2\\
        & \ge\#\left\{(y,z)\in\mathbb{F}_q^2:\ y^2+z^2=4\right\}-4=q-3.
  \end{align*}
  If $I=I^{(3)}=\{1,2,5,6,7\}$, then
  \begin{align*}
    S_I & =\sum\limits_{\substack{y,z\in\mathbb{F}_q                    \\y^2+z^2=4}}\eta(y^2-3)^2\eta(z)^2\\
        & \ge\#\left\{(y,z)\in\mathbb{F}_q^2:\ y^2+z^2=4\right\}-6=q-5.
  \end{align*}

  Now assume that $\# I\ge 1$ and $I\ne I^{(i)}$ for any $1\le i\le 3$.  We can divide $I$ into two parts: $I=I_1\cup I_2$, where $I_1\subset [5]$ and $I_2\subset\{6,7\}$.
  \begin{enumerate}
    \item If $\# I_2=0$, then $\prod_{i\in I}p_i(y,z)$ is a polynomial of $y$. Denote it by $\gamma(y)$. Then
          \begin{align*}
            S_I & =\sum\limits_{\substack{y,z\in\mathbb{F}_q                                                                           \\y^2+z^2=4}}\eta\big(\gamma(y)\big)\\
                & =\sum\limits_{y\in\mathbb{F}_q}\eta\big(\gamma(y)\big)\Big(1+\eta(4-y^2)\Big)                                        \\
                & =\sum\limits_{y\in\mathbb{F}_q}\eta\big(\gamma(y)\big)+\sum\limits_{y\in\mathbb{F}_q}\eta\big(\gamma(y)(4-y^2)\big).
          \end{align*}
          Since $I\ne I^{(1)}$, neither $\gamma(y)$ nor $\gamma(y)(4-y^2)$ is the square of a polynomial. By Theorem \ref{20240904them3}, we have
          \begin{align*}
            S_I & \ge\big(1-\deg(\gamma)\big)\sqrt{q}+\Big(1-\big(\deg(\gamma)+2\big)\Big)\sqrt{q} \\
                & =-2\deg(\gamma)\sqrt{q}.
          \end{align*}
    \item If $\# I_2=2$, then using the relation $y^2+z^2=4$, $\prod_{i\in I}p_i(y,z)$ can be transformed into a polynomial of $y$. Since $I\ne I^{(i)}$ for $i=2,3$, we can also use Theorem \ref{20240904them3} to give a lower bound for $S_I$.
    \item If $\# I_2=1$, then $\prod_{i\in I}p_i(y,z)$ has the form $\phi(y)(z+a)$, where $\phi\in\mathbb{F}_q[x]$ and $a\in\{\pm 1\}$. Similar to the proof of Theorem \ref{20240905them2} (see (\ref{20240904equation3})), we can show that
          \begin{align*}
            S_I & \ge\big(1-r(\phi)\big)\big(r(\phi)-2\big)\sqrt{q}-5r(\phi)^{\frac{13}{3}}-\deg(\phi)-1,
          \end{align*}
          where $r(\phi):=\max\{4,2+2\deg(\phi)\}$.
  \end{enumerate}
  Using a Python program like the one used in the proof of Theorem \ref{20240905them2}, we can obtain that
  \begin{align*}
    \sum\limits_IS_I & \ge 4q-8-3644\sqrt{q}-5173713 \\
                     & =4q-3644\sqrt{q}-5173721,
  \end{align*}
  which implies that
  $$\#\Lambda\ge\frac{1}{2^8}(4q-3644\sqrt{q}-5174041).$$
  If $q\ge 1681^2$, then $\#\Lambda>0$. For the case where $43<q<1681^2$, we directly verify the theorem using a Python program, which can be found in the appendix.
\end{proof}

\begin{remark}
  When $q\in\{11,19,43\}$, we have $\delta_{F_{2,\frac{1}{3}}}=3$.
\end{remark}

\begin{remark}
  Although $1681^2$ is also a relatively large number, it is still within the range that can be verified. To accelerate the computation, we used multiprocessing techniques. We employed a high-performance computer with $112$ cores and completed the verification within $6$ hours.
\end{remark}

\section{The Differential Spectra and Boomerang Uniformity of $F_{2,\pm 1}$}\label{20240907section5}

This section determines the differential spectra and boomerang uniformity of $F_{2,\pm 1}$. By Lemma \ref{20240825lemma10}, it suffices to consider $F_{2,1}$. We have $D_1F_{2,1}(0)=2$ and $D_1F_{2,1}(-1)=0$.

\noindent{\textbf{Case 1.}}\ If $x\in C_{00}$, then
$$D_1F_{2,1}(x)=2\big((x+1)^2-x^2\big)=4x+2.$$
The unique possible solution of $D_1F_{2,1}(x)=b$ is $x=\frac{b-2}{4}$. Moreover, we have
\begin{equation}
  \# A_{00}(b)=\begin{cases}1&\mbox{if}\ \frac{b-2}{4}\in C_{00},\ \mbox{i.e., }b\pm 2\in C_0,\\0&\mbox{otherwise}.\end{cases}
\end{equation}
It is clear that $\# A_{00}(0)=\# A_{00}(2)=0$.

\noindent{\textbf{Case 2.}}\ If $x\in C_{11}$, then
$$D_1F_{2,1}(x)=(1-1)\big((x+1)^2-x^2\big)=0,$$
which implies that
\begin{equation}
  \# A_{11}(b)=\begin{cases}\# C_{11}=\frac{q-3}{4}&\mbox{if}\ b=0,\\0&\mbox{if }b\ne 0.\end{cases}
\end{equation}
\noindent{\textbf{Case 3.}}\ If $x\in C_{01}$, then $D_1F_{2,1}(x)=-2x^2$. It is clear that $\# A_{01}(0)=0$. Since $\eta(-1)=-1$, we have $\# A_{01}(2)=0$. Assume that $b\ne 0$ and consider the equation $D_1F_{2,1}(x)=b\Leftrightarrow x^2=-\frac{b}{2}$. It is clear that $\# A_{01}(b)\le 1$. Moreover, $\# A_{01}(b)=1$ if and only if $\eta(-\frac{b}{2})=1$ and $\eta(y+1)=-1$, where $y$ is the (only) square root of $-\frac{b}{2}$ such that $\eta(y)=1$.

\noindent{\textbf{Case 4.}}\ If $x\in C_{10}$, then
$$D_1F_{2,1}(x)=2x^2+4x+2=2(x+1)^2.$$
It is clear that $\# A_{10}(0)=0$. The two solutions of $D_{1}F_{2,1}(x)=2$ are $x=0$ and $x=-2$, neither of which is in $C_{10}$. Hence $\# A_{10}(2)=0$. Assume that $b\ne 0$ and consider the equation $D_1F_{2,1}(x)=b\ \Leftrightarrow\ (x+1)^2=\frac{b}{2}$. It is clear that $\# A_{10}(b)\le 1$. Moreover, $\# A_{10}(b)=1$ if and only if $\eta(\frac{b}{2})=1$ and $\eta(y-1)=-1$, where $y$ is the (only) square root of $\frac{b}{2}$ such that $\eta(y)=1$.

In summary, we have $\delta_{F_{2,1}}(1,0)=\# A_{11}(0)+1=\frac{q+1}{4}$, $\delta_{F_{2,1}}(1,2)=1$ and for any $b\in\mathbb{F}_q^*$, $\delta_{F_{2,1}}(1,b)\le 2$. If $q>7$, then $\frac{q+1}{4}>2$. Put
$$\Lambda_1=\left\{b\in\mathbb{F}_q:\ \begin{cases}\eta(b\pm 2)=1, \\ \eta(-\frac{b}{2})=1,\\ \eta(y+1)=-1,\ \mbox{where }y\ \mbox{is the}\\
    \mbox{square root of}\ -\frac{b}{2}\ \mbox{with }\eta(y)=1\end{cases}\right\}$$
and
$$\Lambda_2=\left\{b\in\mathbb{F}_q:\ \begin{cases}\eta(b\pm 2)=1, \\ \eta(\frac{b}{2})=1,\\ \eta(y-1)=-1,\ \mbox{where }y\ \mbox{is the}\\
    \mbox{square root of}\ \frac{b}{2}\ \mbox{with }\eta(y)=1\end{cases}\right\}.$$
Then $\Lambda_1\cup\Lambda_2=\{b\in\mathbb{F}_q:\ \delta_{F_{2,1}}(1,b)=2\}$. We have
\begin{align*}
  \#\Lambda_1 & =\#\left\{y\in\mathbb{F}_q:\ \begin{cases}\eta(y)=1,\\\eta(y+1)=-1,\\\eta(-2y^2\pm 2)=1\end{cases}\right\}                       \\
              & =\#\left\{y\in\mathbb{F}_q:\ \begin{cases}\eta(y)=1,\\\eta(y+1)=-1,\\\eta(y-1)=\eta(2),\\\eta(y^2+1)=-\eta(2)\end{cases}\right\} \\
              & =\frac{1}{16}\Bigg(\sum\limits_{y\in\mathbb{F}_q}\displaystyle\prod_{i=1}^4\Big(1+\eta\big(p_i(y)\big)\Big)                      \\
              & \qquad\qquad-\sum\limits_{y\in A}\displaystyle\prod_{i=1}^4\Big(1+\eta\big(p_i(y)\big)\Big)\Bigg)                                \\
              & =\frac{1}{16}\left(\sum\limits_IS_I-\sum\limits_{y\in A}\displaystyle\prod_{i=1}^4\Big(1+\eta\big(p_i(y)\big)\Big)\right),
\end{align*}
where $A=\{0,\pm1\}$, $I$ runs over all subsets of $[4]$, $S_I=\sum_{y\in\mathbb{F}_q}\eta\big(\prod_{i\in I}p_i(y)\big)$ and
\begin{align*}
   & p_1(y)=y,      &  & p_2(y)=-(y+1),    \\
   & p_3(y)=2(y-1), &  & p_4(y)=-2(y^2+1).
\end{align*}
Since $p_2(0)=-1\in C_1$ and $p_4(1)=p_4(-1)=-4\in C_1$, we have
$$\sum\limits_{y\in A}\displaystyle\prod_{i=1}^4\Big(1+\eta\big(p_i(y)\big)\Big)=0.$$
Now we compute each $S_I$.
\begin{enumerate}
  \item If $I=\emptyset$, then $S_I=\#\mathbb{F}_q=q$.
  \item If $I=\{i\}$ for some $i\in [3]$, then $S_I=0$ since $p_i$ is a linear function. By Lemma \ref{20240825lemma5}, we have $S_{\{4\}}=\eta(2)$. It follows that $\sum_{\#I=1}S_I=\eta(2)$.
  \item By Lemma \ref{20240825lemma5}, we have $S_{\{1,2\}}=1$, $S_{\{1,3\}}=-\eta(2)$ and $S_{\{2,3\}}=\eta(2)$. By Lemma \ref{20240906lemma1}, we have $S_{\{1,4\}}=0$. Hence
        \begin{align*}
          \sum\limits_{\#I=2}S_I & =1+\eta(2)\sum\limits_{y\in\mathbb{F}_q}\eta\big((y+1)(y^2+1)\big) \\
                                 & -\sum\limits_{y\in\mathbb{F}_q}\eta\big((y-1)(y^2+1)\big).
        \end{align*}
  \item By Lemma \ref{20240906lemma1}, we have $S_{\{1,2,3\}}=0$. By Lemma \ref{20240906lemma3}, we have
        $$S_{\{1,2,4\}}=-\eta(2)+\eta(2)\sum\limits_{y\in\mathbb{F}_q}\eta\big((y+1)(y^2+1)\big)$$
        and
        $$S_{\{1,3,4\}}=1+\sum\limits_{y\in\mathbb{F}_q}\eta\big((y-1)(y^2+1)\big).$$
        We have
        $$S_{\{2,3,4\}}=\sum\limits_{y\in\mathbb{F}_q}\eta(y^4-1)=-1$$
        by Lemma \ref{20240908lemma2}.
  \item By Lemma \ref{20240906lemma1}, we have $S_{\{1,2,3,4\}}=0$.
\end{enumerate}
In summary, we have
\begin{align}\label{20240908equation1}
  \#\Lambda_1 & =\frac{1}{16}\Bigg(q+1+2\eta(2)\sum\limits_{y\in\mathbb{F}_q}\eta\big((y+1)(y^2+1)\big)\Bigg).
\end{align}
Similarly, we can prove that
\begin{align}\label{20240908equation2}
  \#\Lambda_2 & =\frac{1}{16}\Bigg(q+1-2\sum\limits_{y\in\mathbb{F}_q}\eta\big((y+1)(y^2+1)\big)\Bigg).
\end{align}

\begin{theorem}
  Assume that $q>7$. The differential spectrum of $F_{2,1}$ is given by
  \begin{align*}
    \begin{cases}\omega_0=\frac{(q-1)\Big(3q-5+\big(\eta(2)-1\big)T\Big)}{8}, \\ \omega_1=\frac{(q-1)\Big(2q-2+\big(1-\eta(2)\big)T\Big)}{4}, \\
      \omega_2=\frac{(q-1)\Big(q+1+\big(\eta(2)-1\big)T\Big)}{8},  \\ \omega_{\frac{q+1}{4}}=q-1,\end{cases}
  \end{align*}
  where
  $$T=\sum\limits_{y\in\mathbb{F}_q}\eta\big((y+1)(y^2+1)\big).$$
  In particular, $F_{2,1}$ is a locally-APN function with differential uniformity $\frac{q+1}{4}$.
\end{theorem}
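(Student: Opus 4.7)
The plan is to exploit Lemma \ref{20240825lemma7} to reduce the full spectrum to the values of $\delta_{F_{2,1}}(1,b)$. For each fixed $a\in\mathbb{F}_q^*$, the map $b\mapsto b/a^2$ if $\eta(a)=1$, and $b\mapsto -b/a^2$ if $\eta(a)=-1$, is a bijection of $\mathbb{F}_q$ that identifies $\delta_{F_{2,1}}(a,b)$ with $\delta_{F_{2,1}}(1,b')$. Consequently, setting $N_i:=\#\{b\in\mathbb{F}_q : \delta_{F_{2,1}}(1,b)=i\}$, I get $\omega_i=(q-1)N_i$ for every $i\ge 0$, so it suffices to compute the $N_i$'s.

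From the preceding case-by-case analysis I read off: $\delta_{F_{2,1}}(1,0)=\# A_{11}(0)+1=\frac{q+1}{4}$, where the extra $1$ comes from $x=-1$ (which lies in no $C_{ij}$); $\delta_{F_{2,1}}(1,2)=1$; and $\delta_{F_{2,1}}(1,b)\le 2$ for every $b\in\mathbb{F}_q^*\setminus\{2\}$. Since $q>7$ forces $\frac{q+1}{4}>2$, the spectrum is supported on $\{0,1,2,\frac{q+1}{4}\}$ and $N_{\frac{q+1}{4}}=1$. The crucial step is the evaluation of $N_2$. For $b\in\mathbb{F}_q^*$, $\delta_{F_{2,1}}(1,b)=2$ if and only if exactly two of $\# A_{00}(b),\ \# A_{01}(b),\ \# A_{10}(b)$ equal $1$; but the configuration $\# A_{01}(b)=\# A_{10}(b)=1$ cannot occur, since the former forces $\eta(-b/2)=1$ while the latter forces $\eta(b/2)=1$, incompatibly with $\eta(-1)=-1$. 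Hence the set of such $b$ is the disjoint union $\Lambda_1\cup\Lambda_2$, and summing (\ref{20240908equation1}) and (\ref{20240908equation2}) yields $N_2=\#\Lambda_1+\#\Lambda_2=\frac{q+1+(\eta(2)-1)T}{8}$.

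The remaining counts $\omega_0$ and $\omega_1$ are then pinned down by the two identities in (\ref{20240622equation1}), namely $\sum_i\omega_i=\sum_i i\omega_i=q(q-1)$. Substituting the known values $\omega_{\frac{q+1}{4}}=q-1$ and $\omega_2=(q-1)N_2$ reduces the problem to a $2\times 2$ linear system whose unique solution matches the stated expressions. The differential uniformity equals $\frac{q+1}{4}$ because $\omega_{\frac{q+1}{4}}>0$, and $F_{2,1}$ is locally-APN because the only $b$ at which $\delta_{F_{2,1}}(1,b)$ exceeds $2$ is $b=0\in\mathbb{F}_p$, while $N_2>0$ supplies, after scaling a nonzero $b'\in\Lambda_1\cup\Lambda_2$ by a suitable $a\in\mathbb{F}_q^*$, a witness $(a,b)$ with $b\notin\mathbb{F}_p$ and $\delta_{F_{2,1}}(a,b)=2$. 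The only genuine subtlety is the disjointness of $\Lambda_1$ and $\Lambda_2$; the heavy labour---the Weil-type evaluation of the character sums behind (\ref{20240908equation1}) and (\ref{20240908equation2})---has already been completed above, so what remains is essentially assembly.
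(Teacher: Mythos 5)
Your proposal is correct and follows essentially the same route as the paper: reduce to the counts $N_i=\#\{b:\delta_{F_{2,1}}(1,b)=i\}$ via Lemma \ref{20240825lemma7}, obtain $\omega_2$ from $\#\Lambda_1+\#\Lambda_2$ using (\ref{20240908equation1}) and (\ref{20240908equation2}), and recover $\omega_0,\omega_1$ from the linear system given by (\ref{20240622equation1}). Your explicit verification that $\Lambda_1$ and $\Lambda_2$ are disjoint (via the incompatibility of $\eta(-b/2)=1$ and $\eta(b/2)=1$ when $\eta(-1)=-1$) is a detail the paper leaves implicit, and is a welcome addition.
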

\begin{proof}
  By Lemma \ref{20240825lemma7}, (\ref{20240908equation1}) and (\ref{20240908equation2}), we have $\omega_{\frac{q+1}{4}}=q-1$
  and
  \begin{align*}
    \omega_2 & =\frac{(q-1)\Big(q+1+\big(\eta(2)-1\big)T\Big)}{8}.
  \end{align*}
  By (\ref{20240622equation1}), we have
  \begin{align*}
    \begin{cases}
      \omega_0+\omega_1+\omega_2+\omega_4=(q-1)q, \\
      \omega_1+2\omega_2+\frac{q+1}{4}\omega_4=(q-1)q.
    \end{cases}
  \end{align*}
  It follows that
  \begin{align*}
    \begin{cases}
      \omega_0=\frac{(q-1)\Big(3q-5+\big(\eta(2)-1\big)T\Big)}{8}, \\
      \omega_1=\frac{(q-1)\Big(2q-2+\big(1-\eta(2)\big)T\Big)}{4}.
    \end{cases}
  \end{align*}
\end{proof}

Finally, we compute the boomerang uniformity of $F_{2,1}$. We need to solve the following system of equations
\begin{align}\label{20240908equation3}
  \begin{cases}
    x^2\big(1+\eta(x)\big)-y^2\big(1+\eta(y)\big)=b, \\
    (x+1)^2\big(1+\eta(x+1)\big)-(y+1)^2\big(1+\eta(y+1)\big)=b
  \end{cases}
\end{align}
for any $b^*\in\mathbb{F}_q$. For any $i,j,k,l\in\{0,1\}$, let $A_{ij,kl}(b)$ be the set of solutions $(x,y)$ of (\ref{20240908equation3}) in $C_{ij}\times C_{kl}$.

\begin{lemma}\label{20240909lemma1}
  For any $b\in\mathbb{F}_q^*$, there is no solution $(x,y)$ to the system of equations (\ref{20240908equation3}) with $x\in\{0,-1\}$ or $y\in\{0,-1\}$.
\end{lemma}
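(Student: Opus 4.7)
The plan is to subtract the two equations of the system (\ref{20240908equation3}) to obtain the single condition
$$D_1F_{2,1}(x)=D_1F_{2,1}(y),$$
and then use the case analysis of the equation $D_1F_{2,1}(y)=c$ that was carried out in the four \textbf{Cases 1--4} just before the statement. Since $b\in\mathbb{F}_q^*$, any derivation that forces $y^2\bigl(1+\eta(y)\bigr)=x^2\bigl(1+\eta(x)\bigr)$ via equation one will be a contradiction.

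First I would handle $x\in\{0,-1\}$. If $x=0$, then $D_1F_{2,1}(0)=2$, so we need $D_1F_{2,1}(y)=2$. From the preceding case analysis, $\#A_{00}(2)=\#A_{11}(2)=\#A_{01}(2)=\#A_{10}(2)=0$ (these were verified explicitly in Cases 1--4, using $q\equiv 3\,(\mymod 4)$ and $\eta(-1)=-1$), while $D_1F_{2,1}(-1)=0\ne 2$. Hence the only $y\in\mathbb{F}_q$ with $D_1F_{2,1}(y)=2$ is $y=0$. But then the first equation of (\ref{20240908equation3}) gives $0-0=b$, contradicting $b\ne 0$. If $x=-1$, then $D_1F_{2,1}(-1)=0$, so $D_1F_{2,1}(y)=0$. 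Again from Cases 1--4, $\#A_{00}(0)=\#A_{01}(0)=\#A_{10}(0)=0$ and $\#A_{11}(0)=\#C_{11}=\frac{q-3}{4}$, and clearly $y=-1$ also satisfies $D_1F_{2,1}(-1)=0$ while $y=0$ does not. Hence $y\in C_{11}\cup\{-1\}$, and in either case $y^2\bigl(1+\eta(y)\bigr)=0$ (because $\eta(y)=-1$ or $y=-1$). Combined with $x^2\bigl(1+\eta(x)\bigr)=1\cdot\bigl(1+\eta(-1)\bigr)=0$, the first equation of (\ref{20240908equation3}) gives $b=0$, a contradiction.

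The cases $y\in\{0,-1\}$ are symmetric. If $y=0$, then $D_1F_{2,1}(y)=2$, which by the argument above forces $x=0$, and then the first equation of (\ref{20240908equation3}) gives $b=0$. If $y=-1$, then $D_1F_{2,1}(y)=0$, forcing $x\in C_{11}\cup\{-1\}$, so $x^2\bigl(1+\eta(x)\bigr)=0$, and since $y^2\bigl(1+\eta(y)\bigr)=1\cdot\bigl(1+\eta(-1)\bigr)=0$ we again obtain $b=0$. In every case we contradict $b\in\mathbb{F}_q^*$, which proves the lemma.

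The only subtle point, and hence the one I would double-check carefully, is the step showing that $D_1F_{2,1}(y)=2$ forces $y=0$; this rests on the four computations $\#A_{ij}(2)=0$ recorded in Cases 1--4, each of which in turn uses $\eta(-1)=-1$ (valid since $q\equiv 3\,(\mymod 4)$). Once this is in hand, the rest of the argument is a mechanical reduction using the identity $D_1F_{2,1}(x)=D_1F_{2,1}(y)$ together with the vanishing of $z^2\bigl(1+\eta(z)\bigr)$ on $\{-1\}\cup C_{11}$.
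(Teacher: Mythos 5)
Your proof is correct, but it is organized differently from the paper's. The paper proves the lemma by direct substitution: it plugs $x=0$ (resp.\ $x=-1$) into the system (\ref{20240908equation3}), reduces to the pair of equations $y^2\big(1+\eta(y)\big)=-b$ and $(y+1)^2\big(1+\eta(y+1)\big)=2-b$ (resp.\ $=-b$), and then runs a fresh case analysis on the signs of $\eta(y)$ and $\eta(y+1)$, reaching $b=0$ or $y^2=-1$ or $y=-\tfrac12$ with $\eta(y)=\eta(y+1)=1$ as the contradictions. You instead subtract the two equations to obtain $D_1F_{2,1}(x)=D_1F_{2,1}(y)$ and reuse the already-established facts $\#A_{ij}(2)=0$ for all $i,j$, and $\#A_{ij}(0)=0$ except $\#A_{11}(0)=\#C_{11}$, to pin down the full solution sets of $D_1F_{2,1}(y)=2$ (namely $\{0\}$) and of $D_1F_{2,1}(y)=0$ (namely $C_{11}\cup\{-1\}$); the first equation of the system then forces $b=0$ in every case because $z^2\big(1+\eta(z)\big)$ vanishes on $C_{11}\cup\{-1\}\cup\{0\}$. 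Your route buys economy — it recycles the Section~\ref{20240907section5} differential case analysis instead of redoing a sign analysis, and it handles the $y\in\{0,-1\}$ half genuinely symmetrically — at the cost of depending on those earlier computations being stated for all four cells $C_{ij}$ (which they are). One cosmetic point: the values $\#A_{ij}(0)$ and $\#A_{ij}(2)$ are recorded at the start of Section~\ref{20240907section5}, not immediately before the lemma, so the citation should point there.
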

\begin{proof}
  Assume that $x=0$. Then (\ref{20240908equation3}) becomes
  \begin{align*}
    \begin{cases}
      y^2\big(1+\eta(y)\big)=-b, \\
      (y+1)^2\big(1+\eta(y+1)\big)=2-b.
    \end{cases}
  \end{align*}
  Since $b\ne 0$, we have $\eta(y)=1$. If $\eta(y+1)=1$, then we have
  $$(y+1)^2=1-\frac{b}{2}=1+y^2,$$
  which implies that $y=0$ and thus $b=0$. This is a contradiction. If $\eta(y+1)=-1$, then $b=2$ and thus $y^2=-1$. This is also a contradiction. Hence there is no solution $(x,y)$ to (\ref{20240908equation3}) with $x=0$.

  Assume that $x=-1$. Then (\ref{20240908equation3}) becomes
  \begin{align*}
    \begin{cases}
      y^2\big(1+\eta(y)\big)=-b, \\
      (y+1)^2\big(1+\eta(y+1)\big)=-b.
    \end{cases}
  \end{align*}
  Since $b\ne 0$, we have $\eta(y)=1$ and $\eta(y+1)=1$, which implies that $y^2=(y+1)^2=-\frac{b}{2}$. It follows that $y=-\frac{1}{2}$. But then it is impossible that $\eta(y)=\eta(y+1)=1$. Hence there is no solution $(x,y)$ to (\ref{20240908equation3}) with $x=-1$. By symmetry, we can prove the assertion on $y$.
\end{proof}

By Lemma \ref{20240909lemma1}, we have
$$\beta_{F_{2,1}}(1,b)=\sum\limits_{i,j,k,l\in\{0,1\}}\# A_{ij,kl}.$$

\noindent{\textbf{Case 1.}}\ If $(x,y)\in C_{00}\times C_{00}$, then (\ref{20240908equation3}) becomes
\begin{align*}
         & \begin{cases}
             x^2-y^2=\frac{b}{2}, \\
             (x+1)^2-(y+1)^2=\frac{b}{2},
           \end{cases} \\
  \iff\  & \begin{cases}
             (x-y)(x+y)=\frac{b}{2}, \\
             (x-y)(x+y+2)=\frac{b}{2}.
           \end{cases}
\end{align*}
Since $b\ne 0$ we have $x\ne y$, which implies that $x+y=x+y+2$. This is impossible and thus $\# A_{00,00}(b)=0$ for any $b\in\mathbb{F}_q^*$.

\noindent{\textbf{Case 2.}}\ If $(x,y)\in C_{00}\times C_{01}$, then (\ref{20240908equation3}) becomes
\begin{align}\label{20240909equation4}
  \begin{cases}
    x^2-y^2=\frac{b}{2}, \\
    (x+1)^2=\frac{b}{2},
  \end{cases}\iff\ \ \begin{cases}(x+1)^2=\frac{b}{2}, \\
                       y^2=-(2x+1).
                     \end{cases}
\end{align}
It follows that $\# A_{00,01}(b)\le 1$ and $\# A_{00,01}(b)=1$ if and only if
\begin{align*}
  \begin{cases}
    \eta(\frac{b}{2})=1,                                               \\
    \eta(x-1)=1,\ \mbox{where }x\ \mbox{is the (only) square root of}  \\
    \qquad\qquad\qquad\,\frac{b}{2}\ \mbox{such that }\eta(x)=1,       \\
    \eta(1-2x)=1,                                                      \\
    \eta(y+1)=-1,\ \mbox{where }y\ \mbox{is the (only) square root of} \\
    \qquad\qquad\qquad\quad 1-2x\ \mbox{such that }\eta(y)=1.
  \end{cases}
\end{align*}

\noindent{\textbf{Case 3.}}\ If $(x,y)\in C_{00}\times C_{10}$, then (\ref{20240908equation3}) becomes
\begin{align*}
  \begin{cases}
    x^2=\frac{b}{2}, \\
    (x+1)^2-(y+1)^2=\frac{b}{2},
  \end{cases}\iff\ \ \begin{cases}
                       x^2=\frac{b}{2}, \\
                       (y+1)^2=2x+1.
                     \end{cases}
\end{align*}
It follows that $\# A_{00,10}(b)\le 1$ and $\# A_{00,10}(b)=1$ if and only if
\begin{align}\label{20240909equation2}
  \begin{cases}
    \eta(\frac{b}{2})=1,                                               \\
    \eta(x+1)=1,\ \mbox{where }x\ \mbox{is the (only) square root of}  \\
    \qquad\qquad\qquad\,\frac{b}{2}\ \mbox{such that }\eta(x)=1,       \\
    \eta(1+2x)=1,                                                      \\
    \eta(y-1)=-1,\ \mbox{where }y\ \mbox{is the (only) square root of} \\
    \qquad\qquad\qquad\quad 1+2x\ \mbox{such that }\eta(y)=1.
  \end{cases}
\end{align}

\noindent{\textbf{Case 4.}}\ If $(x,y)\in C_{00}\times C_{11}$, then (\ref{20240908equation3}) becomes
\begin{align*}
  \begin{cases}
    x^2=\frac{b}{2}, \\
    (x+1)^2=\frac{b}{2},
  \end{cases}\Rightarrow\ \  x=-\frac{1}{2}.
\end{align*}
Since $\eta(-\frac{1}{2}+1)=\eta(\frac{1}{2})=-\eta(\frac{1}{2})$, we have $-\frac{1}{2}\not\in C_{00}$. Hence $\# A_{00,11}(b)=0$ for any $b\in\mathbb{F}_q^*$.

\noindent{\textbf{Case 5.}}\ If $(x,y)\in C_{01}\times C_{00}$, then (\ref{20240908equation3}) becomes
\begin{align*}
  \begin{cases}
    x^2-y^2=\frac{b}{2}, \\
    (y+1)^2=-\frac{b}{2},
  \end{cases}\iff\ \ \begin{cases}
                       (y+1)^2=-\frac{b}{2}, \\
                       x^2=-(2y+1).
                     \end{cases}
\end{align*}
It follows that $\# A_{00,11}\le 1$ and $\# A_{00,11}=1$ if and only if
\begin{align*}
  \begin{cases}
    \eta(-\frac{b}{2})=1,                                              \\
    \eta(x-1)=1,\ \mbox{where }x\ \mbox{is the (only) square root of}  \\
    \qquad\qquad\qquad\,-\frac{b}{2}\ \mbox{such that }\eta(x)=1,      \\
    \eta(1-2x)=1,                                                      \\
    \eta(y+1)=-1,\ \mbox{where }y\ \mbox{is the (only) square root of} \\
    \qquad\qquad\qquad\quad 1-2x\ \mbox{such that }\eta(y)=1.
  \end{cases}
\end{align*}

\noindent{\textbf{Case 6.}}\ If $(x,y)\in C_{01}\times C_{01}$, then (\ref{20240908equation3}) becomes
\begin{align*}
  \begin{cases}
    x^2-y^2=\frac{b}{2}, \\
    b=0,
  \end{cases}
\end{align*}
which implies that $\# A_{01,01}(b)=0$ for any $b\in\mathbb{F}_q^*$.

\noindent{\textbf{Case 7.}}\ If $(x,y)\in C_{01}\times C_{10}$, then (\ref{20240908equation3}) becomes
\begin{align*}
  \begin{cases}
    x^2=\frac{b}{2}, \\
    (y+1)^2=-\frac{b}{2},
  \end{cases}
\end{align*}
which implies that $\# A_{01,10}(b)=0$ for any $b\in\mathbb{F}_q^*$.

\noindent{\textbf{Case 8.}}\ If $(x,y)\in C_{01}\times C_{11}$, then (\ref{20240908equation3}) becomes
\begin{align*}
  \begin{cases}
    x^2=\frac{b}{2}, \\
    b=0,
  \end{cases}
\end{align*}
which implies that $\# A_{01,11}(b)=0$ for any $b\in\mathbb{F}_q^*$.

\noindent{\textbf{Case 9.}}\ If $(x,y)\in C_{10}\times C_{00}$, then (\ref{20240908equation3}) becomes
\begin{align*}
  \begin{cases}
    y^2=-\frac{b}{2}, \\
    (x+1)^2-(y+1)^2=\frac{b}{2},
  \end{cases}\Leftrightarrow\ \ \begin{cases}
                                  y^2=-\frac{b}{2}, \\
                                  (x+1)^2=2y+1.
                                \end{cases}
\end{align*}
It follows that $\# A_{10,00}\le 1$ and $\# A_{10,00}=1$ if and only if
\begin{align*}
  \begin{cases}
    \eta(-\frac{b}{2})=1,                                              \\
    \eta(x+1)=1,\ \mbox{where }x\ \mbox{is the (only) square root of}  \\
    \qquad\qquad\qquad\,-\frac{b}{2}\ \mbox{such that }\eta(x)=1,      \\
    \eta(1+2x)=1,                                                      \\
    \eta(y-1)=-1,\ \mbox{where }y\ \mbox{is the (only) square root of} \\
    \qquad\qquad\qquad\quad 1+2x\ \mbox{such that }\eta(y)=1.
  \end{cases}
\end{align*}

By following the analysis above, it can be easily proven that $\# A_{10,01}(b)=\# A_{10,10}(b)=\# A_{10,11}(b)=\# A_{11,00}(b)=\# A_{11,01}(b)=\# A_{11,10}(b)=\# A_{11,11}(b)=0$ for any $b\in\mathbb{F}_q^*$. Then we have the following corollary.

\begin{corollary}
  For any $b\in\mathbb{F}_q^*$, we have $\beta_{F_{2,1}}(1,b)=\# A_{00,01}(b)+\# A_{00,10}(b)+\# A_{01,00}(b)+\# A_{10,00}(b)\le 2$. Moreover, $\delta_{F_{2,1}}(1,b)=2$ if and only if $\# A_{00,01}(b)=\# A_{00,10}(b)=1$ or $\# A_{01,00}(b)=\# A_{10,00}(b)=1$.
\end{corollary}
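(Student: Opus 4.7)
The plan is to assemble the corollary from the case analysis already carried out in this section, together with one key sign observation that couples the four remaining $A_{ij,kl}$ sets. The starting point is Lemma \ref{20240909lemma1}, which guarantees that any solution $(x,y)$ of the boomerang system lies in $(\mathbb{F}_q\setminus\{0,-1\})^2$, so that
$$\beta_{F_{2,1}}(1,b)=\sum_{i,j,k,l\in\{0,1\}}\# A_{ij,kl}(b).$$
The exhaustive Cases 1--9 above, together with the stated vanishing of $\# A_{10,01}$, $\# A_{10,10}$, $\# A_{10,11}$, $\# A_{11,*}$ for $b\in\mathbb{F}_q^*$, reduce this sum to exactly the four terms $\# A_{00,01}(b)+\# A_{00,10}(b)+\# A_{01,00}(b)+\# A_{10,00}(b)$. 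The case analysis also shows that each of these four quantities is at most $1$, giving the first equality of the corollary immediately.

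For the upper bound of $2$, I would exploit the necessary conditions extracted in Cases 2, 3, 5 and 9. Specifically, $\# A_{00,01}(b)=1$ and $\# A_{00,10}(b)=1$ each require $\eta(\tfrac{b}{2})=1$, whereas $\# A_{01,00}(b)=1$ and $\# A_{10,00}(b)=1$ each require $\eta(-\tfrac{b}{2})=1$. Since $q\equiv 3\pmod 4$ implies $\eta(-1)=-1$, we have $\eta(\tfrac{b}{2})=-\eta(-\tfrac{b}{2})$, so at most one of the two conditions $\eta(\tfrac{b}{2})=1$ and $\eta(-\tfrac{b}{2})=1$ can hold. Consequently at most two of the four summands can equal $1$:
$$\beta_{F_{2,1}}(1,b)\le\begin{cases}\# A_{00,01}(b)+\# A_{00,10}(b)\le 2, & \text{if }\eta(\tfrac{b}{2})=1,\\[2pt]\# A_{01,00}(b)+\# A_{10,00}(b)\le 2, & \text{if }\eta(\tfrac{b}{2})=-1.\end{cases}$$

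The characterization of equality $\beta_{F_{2,1}}(1,b)=2$ is then automatic from the dichotomy: in the first branch only the pair $(\# A_{00,01},\# A_{00,10})$ can contribute, so $\beta=2$ forces both to equal $1$, while in the second branch only $(\# A_{01,00},\# A_{10,00})$ can contribute and $\beta=2$ forces both of those to equal $1$. No further computation is required. The only subtle point, and the one I would be most careful about, is verifying that the exhaustive enumeration of Cases 1--9 really does eliminate every combination involving $C_{10}\times C_{01}$, $C_{10}\times C_{10}$, $C_{10}\times C_{11}$, and every $C_{11}\times C_{kl}$ pair for $b\ne 0$; this is the place where the symmetry between $x$ and $y$ combined with the constraint $b\ne 0$ does the work, and it is exactly what the paragraph preceding the corollary asserts.
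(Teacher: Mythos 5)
Your proposal is correct and follows essentially the same route as the paper: the corollary is assembled from Cases 1--9 plus the stated vanishing of the remaining $\# A_{ij,kl}$, and the bound $\le 2$ comes from the observation that Cases 2 and 3 require $\eta(\tfrac{b}{2})=1$ while Cases 5 and 9 require $\eta(-\tfrac{b}{2})=1$, which are mutually exclusive since $\eta(-1)=-1$. The paper leaves this sign dichotomy implicit, but it is exactly the intended argument, and your characterization of when the value $2$ is attained follows as you describe.
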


\begin{theorem}
  If $q\ge 9613^2$, then the boomerang uniformity of $F_{2,1}$ is $2$.
\end{theorem}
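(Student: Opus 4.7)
The plan is to show that the set
\[
\Lambda = \{b \in \mathbb{F}_q^* : \# A_{00,01}(b) = \# A_{00,10}(b) = 1\}
\]
is non-empty under the hypothesis $q \geq 9613^2$. Since the preceding corollary gives $\beta_{F_{2,1}}(1,b) \leq 2$ for every $b \in \mathbb{F}_q^*$ (which, combined with Lemma \ref{20240825lemma7}, forces $\beta_{F_{2,1}} \leq 2$) and characterizes equality as either $\# A_{00,01}(b) = \# A_{00,10}(b) = 1$ or $\# A_{01,00}(b) = \# A_{10,00}(b) = 1$, any witness $b \in \Lambda$ immediately yields $\beta_{F_{2,1}} = 2$. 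The second alternative produces a parallel set that, by the symmetry $b \mapsto -b$ and $\eta(-1) = -1$, can be analyzed identically, so treating $\Lambda$ alone is sufficient.

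First I would translate $b \in \Lambda$ into scalar conditions. From the explicit descriptions in Cases~2 and~3, $b \in \Lambda$ is equivalent to the existence of $s \in \mathbb{F}_q^*$ with $s^2 = b/2$, $\eta(s) = 1$, and
\[
\eta(s-1) = \eta(s+1) = \eta(1-2s) = \eta(1+2s) = 1,\quad \eta(y+1) = -1,\quad \eta(z-1) = -1,
\]
where $y$ and $z$ are the square roots of $1-2s$ and $1+2s$ chosen so that $\eta(y) = \eta(z) = 1$. The substitutions $y^2 = 1-2s$, $z^2 = 1+2s$ set up a bijection between admissible $s$ and pairs $(y,z)$ on the curve $y^2 + z^2 = 2$ with $\eta(y) = \eta(z) = 1$, using $s = (1-y^2)/2 = (z^2-1)/2$. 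The remaining sign conditions on $s$ can be rewritten as conditions on $(y,z)$: for instance $\eta(s) = 1$ becomes $\eta(z-1)\eta(z+1) = \eta(2)$, which together with $\eta(z-1) = -1$ gives $\eta(z+1) = -\eta(2)$, while $\eta(s\pm 1) = 1$ translate into sign conditions on $\eta(z^2 \mp 3)$ (equivalently $\eta(y^2 \pm 1)$). In total one arrives at seven sign conditions $\eta(p_i(y,z)) = \epsilon_i$ where the $p_i$ are linear in $z$ or of degree at most $2$ in $y$.

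Next I would apply the character-sum expansion following the template of Theorems \ref{20240905them2} and \ref{20240908them2}. One writes
\[
2^7 \cdot \#\Lambda = \sum_{I \subseteq [7]} S_I - \text{(a bounded correction from finitely many singular points where some $p_i$ vanishes)},
\]
where
\[
S_I = \sum_{\substack{y,z \in \mathbb{F}_q \\ y^2 + z^2 = 2}} \eta\Bigl(\prod_{i \in I} \epsilon_i\, p_i(y,z)\Bigr).
\]
The empty subset gives $S_\emptyset = q + 1$ by Lemma \ref{20240903lemma1}, and a short distinguished list of $I$ (those whose product becomes, after reducing modulo $y^2 + z^2 - 2$, a square in $\overline{\mathbb{F}_q}[y]$) also contribute terms linear in $q$, evaluated again via Lemma \ref{20240903lemma1}. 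For each remaining $I$ I would reduce $\prod_{i \in I} p_i(y,z)$ on the curve into one of the three normal forms $\gamma(y)$, $\phi(y)(z+a)$, or $\phi(y)(z + dy^2 + a)$ introduced in the proof of Theorem \ref{20240905them2}, and bound $S_I$ either by Theorem \ref{20240904them3} (first form) or, after checking absolute irreducibility of the auxiliary quartic $t^4 - 2\phi(y)\rho(y)t^2 + \phi(y)^2\bigl(\rho(y)^2 + y^2 - 2\bigr)$ via Lemmas \ref{20240904lemma2} and \ref{20240904lemma1}, by Theorem \ref{20240904them1} (latter two forms). Aggregating these estimates yields an inequality of the form
\[
\#\Lambda \geq \frac{1}{2^7}\bigl(c\,q - m_1 \sqrt{q} - m_2\bigr)
\]
for an explicit positive integer $c$ and explicit constants $m_1, m_2$; the threshold $q \geq 9613^2$ is exactly what is required to force $\#\Lambda > 0$.

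The main obstacle is the combinatorial bookkeeping: with seven polynomial factors there are $128$ subsets $I$ to classify into main term versus error term, and for a number of them the reduction to a normal form requires substituting $z^2 = 2 - y^2$ (or the reverse) to remove the dependence on one variable. As was already the case in the proofs of Theorems \ref{20240905them2} and \ref{20240908them2}, the practical way to carry out this tabulation is through a short symbolic-computation script; the explicit values of $m_1$ and $m_2$ output by the script then determine the precise threshold $9613^2$ stated in the theorem.
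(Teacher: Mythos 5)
Your proposal is correct and follows essentially the same route as the paper: reduce to showing that $\Lambda_1=\{b:\ \#A_{00,01}(b)=\#A_{00,10}(b)=1\}$ is non-empty, parametrize by $(y,z)$ on the conic $y^2+z^2=2$ with seven sign conditions (trading the quadratic condition $\eta\big(\tfrac{z^2-1}{2}\big)=1$ for the linear one $\eta\big(\tfrac{1+z}{2}\big)=-1$ exactly as the paper does), and then run the $\tfrac{1}{2^7}\sum_I S_I$ expansion with Lemma \ref{20240903lemma1}, Theorem \ref{20240904them3}, and the quartic-curve estimate via Theorem \ref{20240904them1}. The paper's computed constants are $m_1=7756$ and $m_2=17844127$ (with main term $q$), giving the stated threshold $q\ge 9613^2$.
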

\begin{proof}
  Put $\Lambda_1=\{b\in\mathbb{F}_q:\ \# A_{00,01}(b)=\# A_{00,10}(b)=1\}$ and $\Lambda_2=\{b\in\mathbb{F}_q:\ \# A_{01,00}(b)=\# A_{10,00}(b)=1\}$. Then it is clear that $\Lambda_1\cup\Lambda_2=\{b\in\mathbb{F}_q:\ \delta_{F_{2,1}}(1,b)=2\}$ and $\Lambda_1=-\Lambda_2$. By (\ref{20240909equation4}) and (\ref{20240909equation2}), $\Lambda_1$ consists of $b\in\mathbb{F}_q$ such that
  \begin{align*}
    \begin{cases}
      \eta(\frac{b}{2})=1,                                                 \\
      \eta(x\pm 1)=1,\ \mbox{where }x\ \mbox{is the (only) square root of} \\
      \qquad\qquad\qquad\,\frac{b}{2}\ \mbox{such that }\eta(x)=1,         \\
      \eta(1\pm 2x)=1,                                                     \\
      \eta(y+1)=-1,\ \mbox{where }y\ \mbox{is the (only) square root of}   \\
      \qquad\qquad\qquad\quad 1-2x\ \mbox{such that }\eta(y)=1,            \\
      \eta(z-1)=-1,\ \mbox{where }z\ \mbox{is the (only) square root of}   \\
      \qquad\qquad\qquad\quad 1+2x\ \mbox{such that }\eta(z)=1.
    \end{cases}
  \end{align*}
  It follows that
  \begin{align*}
    \#\Lambda_1 & =\#\left\{(y,z)\in\mathbb{F}_q^2:\ \begin{cases}
                                                       y^2+z^2=2,               \\
                                                       \eta(y)=1,               \\
                                                       \eta(z)=1,               \\
                                                       \eta(y+1)=-1,            \\
                                                       \eta(z-1)=-1,            \\
                                                       \eta(\frac{z^2-1}{2})=1, \\                                 \eta(\frac{z^2+1}{2})=1, \\
                                                       \eta(\frac{z^2-3}{2})=1
                                                     \end{cases}
    \right\}                                                                                                                              \\
                & =\#\left\{(y,z)\in\mathbb{F}_q^2:\ \begin{cases}
                                                       y^2+z^2=2,              \\
                                                       \eta(y)=1,              \\
                                                       \eta(z)=1,              \\
                                                       \eta(y+1)=-1,           \\
                                                       \eta(1-z)=1,            \\
                                                       \eta(\frac{1+z}{2})=-1, \\                                 \eta(\frac{z^2+1}{2})=1, \\
                                                       \eta(\frac{z^2-3}{2})=1
                                                     \end{cases}
    \right\}.
  \end{align*}
  Using the previous method, we can obtain that
  $$\#\Lambda\ge\frac{1}{2^7}(q-7756\sqrt{q}-17844127).$$
  If $q\ge 9613^2$, then $\#\Lambda>0$.
\end{proof}

\begin{remark}
  Numerical results suggest that the theorem is true when $q\ge 307$.
\end{remark}

\section{Conclusion}\label{20240907section6}

In this paper, we conducted an in-depth study on the differential and boomerang properties of the binomial function $F_{2,u}(x)=x^2\big(1+u\eta(x)\big)$ over $\mathbb{F}_q$, where $q$ is an odd prime power with $q\equiv 3\ (\mymod 4)$ and $u\in\mathbb{F}_q^*$. By adopting a methodology that combines algebraic and geometric tools, we determined the differential uniformity of $F_{2,u}$ for any $u\in\mathbb{F}_q^*$ and specifically proved  that
\begin{align*}
  \delta_{F_{2,u}}=\begin{cases}
                     \frac{q+1}{4} & \mbox{if } u\in\{\pm 1\},                                                       \\
                     5             & \mbox{if }u\in\mathbb{F}_q\setminus\mathcal{U}\ \mbox{and }\eta(1+u)=\eta(u-1), \\
                     4             & \mbox{if }u\in\mathbb{F}_q\setminus\mathcal{U}\ \mbox{and }\eta(1+u)=\eta(1-u), \\
                     4             & \mbox{if }q\equiv 3\ (\mymod 8),\ p\ne 3\ \mbox{and }u\in\{\pm\frac{1}{3}\},    \\
                     3             & \mbox{if }q\equiv 7\ (\mymod 8)\ \mbox{and }u\in\{\pm\frac{1}{3}\},
                   \end{cases}
\end{align*}
where $$\mathcal{U}=\begin{cases}
    \{0,\pm 1\}                & \mbox{if}\ p=3,    \\
    \{0,\pm 1,\pm\frac{1}{3}\} & \mbox{if}\ p\ne 3.
  \end{cases}$$
Note that these equalities hold only when $q$ is sufficiently large. In particular, we disproved the conjecture proposed in \cite{budaghyan2024arithmetization}. We also determined the differential spectra of the locally-APN functions $F_{2,\pm 1}$ by expressing them in terms of several quadratic character sums of cubic polynomials (when $q\equiv 7\ (\mymod 8)$, these character sums are actually eliminated). Finally, we showed that the boomerang uniformity of $F_{2,\pm 1}$ is $2$ for sufficiently large $q$. The proven results show that the function $F_{2,u}$ has favorable differential properties.

The methods used in this paper are both typical and innovative, and we believe they will also help solve other problems.

\bibliographystyle{IEEEtranS}
\bstctlcite{IEEEexample:BSTcontrol}
\bibliography{references}{}
	
\onecolumn
\appendix[Auxiliary Programs used in the proofs]

\lstset{
  columns=fixed,                                       
  numberstyle=\tiny\color{gray},                       
  frame=none,                                          
  backgroundcolor=\color[RGB]{245,245,244},            
  keywordstyle=\color[RGB]{40,40,255},                 
  numberstyle=\footnotesize\color{darkgray},
  commentstyle=\it\color[RGB]{0,96,96},                
  stringstyle=\rmfamily\slshape\color[RGB]{128,0,0},   
  showstringspaces=false,                              
  language=python,                                        
}

Note that all the Python programs in the appendix need to be run in the SageMath environment. SageMath is a free open-source Python-based mathematics software system, whose official website is \url{https://www.sagemath.org}.

\begin{enumerate}
  \item The Program Used in the Proof of Theorem \ref{20240905them2}
        \begin{lstlisting}[language=Python]
import math
# Store the degrees of p_1~p_6
degrees_of_y_polynomials = [1, 1, 1, 1, 2, 2]
# Obtain all subsets of {1,2,3,4,5,6}
I_1s = Subsets({1, 2, 3, 4, 5, 6})
# Store the final results
m_1, m_2 = 0, 0   

def get_deg_omega(deg_phi, deg_rho):
    if deg_rho == 0:
        return max([4, 2 + 2 * deg_phi])
    else:
        return 4+2*deg_phi

def lower_bound_1(deg_phi, deg_rho):
    deg_omega = get_deg_omega(deg_phi, deg_rho)
    return -(deg_omega-1)*(deg_omega-2), -5*pow(deg_omega, 13/3)-deg_phi-1

# If #I_2=0
for I_1 in I_1s:
    if I_1 != set():  # I_1 is not empty
        # In Python, the index starts from 0
        d_gamma = sum([degrees_of_y_polynomials[i-1] for i in I_1])
        m_1 += -2 * d_gamma

# If #I_2=1
for I_1 in I_1s:
    deg_phi = sum([degrees_of_y_polynomials[i-1] for i in I_1])
    differences = lower_bound_1(deg_phi, 0)
    for i in range(4):  # p_7~p_10
        m_1 += differences[0]
        m_2 += differences[1]

# If #I_2=3
for I_1 in I_1s:
    # Note that we should add 2 to the degree
    deg_phi = sum([degrees_of_y_polynomials[i-1] for i in I_1]) + 2
    differences = lower_bound_1(deg_phi, 0)
    for i in range(4):  # 3-sets of {7,8,9,10}
        m_1 += differences[0]
        m_2 += differences[1]

# If #I_2=4
for I_1 in I_1s:
    if I_1 != {5, 6}:
        # Note that we should add 4 to the degree
        d_gamma = sum([degrees_of_y_polynomials[i-1] for i in I_1]) + 4
        # If 5 or 6 is in I_1, no extra roots are added
        if 5 in I_1:
            d_gamma -= 2
        if 6 in I_1:
            d_gamma -= 2
        m_1 += -2 * d_gamma

# If #I_2=2
# I_2={7,8}
for I_1 in I_1s:
    if I_1 != {5}:
        # Note that we should add 2 to the degree
        d_gamma = sum([degrees_of_y_polynomials[i-1] for i in I_1]) + 2
        if 5 in I_1:
            d_gamma -= 2
        m_1 += -2 * d_gamma
# I_2={9,10}
for I_1 in I_1s:
    if I_1 != {6}:
        # Note that we should add 2 to the degree
        d_gamma = sum([degrees_of_y_polynomials[i-1] for i in I_1]) + 2
        if 6 in I_1:
            d_gamma -= 2
        m_1 += -2 * d_gamma
# Otherwise (4 cases)
for I_1 in I_1s:
    deg_phi = sum([degrees_of_y_polynomials[i-1] for i in I_1])
    differences = lower_bound_1(deg_phi, 2)
    for i in range(4):
        m_1 += differences[0]
        m_2 += differences[1]

print(f'm_1={m_1}', '\n', f'm_2={math.floor(m_2)}')
  \end{lstlisting}
  \item The Program Used in the Proofs of Theorem \ref{20240908them1} and Theorem \ref{20240908them2}
        \begin{lstlisting}[language=Python]
import multiprocessing
from collections import Counter
from functools import reduce

# Obtain all prime powers < n
def find_prime_powers(n):
    result = []
    for p in primes(n):
        power = p
        while power < n:
            result.append(power)
            power *= p
    return sorted(result)

def compute_differential_uniformity(prime_power):
    field = GF(prime_power)
    square_elements = set()
    square_table = {}
    for x in field:
        square_table[x] = x^2
        if x != 0:
            square_elements.add(x^2)
    def eta(x):
        if x == 0: return 0
        if x in square_elements: return 1
        return -1
    def F(x):
        return square_table[x] * (1 + field(1)/3 * eta(x))
    
    return max([number for _,number in 
                Counter([F(x+1)-F(x) for x in field]).items()])

# Proof of Theorem 5
print("Proof of Theorem 5")
n_1 = 58**2 
prime_powers_1 = [item for item in find_prime_powers(n_1) if item % 8 == 7]

for prime_power in prime_powers_1:
    differential_uniformity = compute_differential_uniformity(prime_power)
    if differential_uniformity != 3:
        print(f"Exception: q={prime_power}, "
              f"differential uniformity={differential_uniformity}")

# Proof of Theorem 6
print("Proof of Theorem 6")
n_2 = 1681**2
prime_powers_2 = [item for item in find_prime_powers(n_2) 
                  if item % 8 == 3 and item % 3 != 0]

# Split a list into several parts as evenly as possible
def split_list(my_list, count):
    avg_length = len(my_list) // count
    extra = len(my_list) % count
    result = []
    start = 0
    for i in range(count):
        end = start + avg_length + (1 if i < extra else 0)
        result.append(my_list[start:end])
        start = end
    return result

# Obtain the number of CPU cores
num_cores = multiprocessing.cpu_count()
prime_powers_2_split = split_list(prime_powers_2, num_cores)

# Use multithreading to speed up the computation
def worker(prime_powers):
    result=[]
    for prime_power in prime_powers:
        differential_uniformity = compute_differential_uniformity(
                                  prime_power)
        if differential_uniformity != 4:
            result.append((prime_power, differential_uniformity))
    return result
    
with multiprocessing.Pool(processes=num_cores) as pool:
    results = pool.map(worker, prime_powers_2_split)
    results = reduce(lambda x, y: x + y, results)
    For an exception in results:
        print(f"Exception: q={exception[0]}, "
              f"differential uniformity={exception[1]}")
        \end{lstlisting}
\end{enumerate}


\end{document}